\newtheorem{theorem}[subsection]{Theorem}
\newtheorem{proposition}[subsection]{Proposition}
\newtheorem{corollary}[subsection]{Corollary}
\newtheorem{lemma}[subsection]{Lemma}
\theoremstyle{definition}
\newtheorem{definition}[subsection]{Definition}
\newtheorem{remark}[subsection]{Remark}
\numberwithin{equation}{subsection}
\def\L{\Lambda}
\def\G{\Gamma}
\def\g{\gamma}
\def\d{\delta}
\def\D{\Delta}
\def\e{\epsilon}
\def\a{\alpha}
\def\r{\rho}
\def\O{\Omega}
\def\i{\iota}
\newcommand{\dctf}{D_{\mathrm{ctf}}^b}
\newcommand{\ra}{\rightarrow}
\newcommand{\xra}{\xrightarrow}
\newcommand{\isora}{\xrightarrow{\sim}}
\newcommand{\R}{\mathrm R}
\newcommand{\h}{\mathrm H}
\newcommand{\CH}{\mathrm{CH}}
\newcommand{\iso}{\cong}
\newcommand{\cl}{\mathrm{cl}}
\newcommand{\bQ}{\mathbb{Q}}
\newcommand{\bF}{\mathbb{F}}
\newcommand{\bD}{\mathbf{D}}
\newcommand{\bZ}{\mathbb{Z}}
\newcommand{\A}{\mathbb{A}}
\newcommand{\ddy}{\geqslant}
\newcommand{\xdy}{\leqslant}
\newcommand{\sK}{\mathscr{K}}
\newcommand{\sF}{\mathscr{F}}
\newcommand{\sG}{\mathscr{G}}
\newcommand{\sH}{\mathscr{H}}
\newcommand{\sO}{\mathscr{O}}
\newcommand{\sL}{\mathscr{L}}
\newcommand{\fF}{\mathfrak{F}}
\newcommand{\shom}{\mathscr{H}\!om}
\newcommand{\send}{\mathscr{E}nd}
\newcommand{\ot}{\otimes}
\newcommand{\ti}{\times}
\newcommand{\bxt}{\boxtimes}
\newcommand{\xx}{X\ti_kX}
\newcommand{\uu}{U\ti_kU}
\newcommand{\ev}{\mathrm{ev}}
\newcommand{\ol}{\overline}
\newcommand{\pr}{\mathrm{pr}}
\newcommand{\id}{\mathrm{id}}
\newcommand{\rshom}{\R\shom}
\newcommand{\yz}{Y\ti_kZ}
\newcommand{\yzlb}{(Y\ti_kZ)'}
\newcommand{\yzlp}{Y\Asterisk_kZ}
\newcommand{\yzd}{(Y\ti_kZ)^{\dagger}}
\newcommand{\sI}{\mathscr{I}}
\newcommand{\xxlp}{X\Asterisk_kX}
\newcommand{\xxlb}{(\xx)'}
\newcommand{\xxd}{(X\ti_kX)^{\dagger}}
\newcommand{\wt}{\widetilde}
\newcommand{\rmt}{\mathrm{t}}
\newcommand{\rw}{\mathrm{w}}
\newcommand{\jr}{j^{(R)}}
\newcommand{\fr}{f^{(R)}}
\newcommand{\xr}{X^{(R)}}
\newcommand{\sHr}{\sH^{(R)}}
\newcommand{\xxr}{(\xxlp)^{(R)}}
\newcommand{\sE}{\mathscr{E}}
\newcommand{\bp}{\bigoplus}
\newcommand{\ma}{\mapsto}
\newcommand{\vx}{V\ti_k X}
\newcommand{\uv}{U\ti_kV}
\newcommand{\vv}{V\ti_kV}
\newcommand{\vvlb}{(V\ti_k V)'}
\newcommand{\vvlp}{V\Asterisk_k V}
\newcommand{\vvd}{(V\ti_kV)^{\dagger}}
\newcommand{\gk}{G_K}
\newcommand{\gkl}{G_{K,\log}}
\newcommand{\grl}{\mathrm{Gr}^r_{\log}{G_K}}
\newcommand{\OX}{\Omega^1_{X/k}}
\newcommand{\sr}{S^{(R)}}
\newcommand{\adj}{\mathrm{adj}}
\DeclareMathOperator{\End}{End}
\DeclareMathOperator{\spec}{Spec}
\DeclareMathOperator{\Hom}{Hom}
\DeclareMathOperator{\aut}{Aut}
\DeclareMathOperator{\rk}{rk}
\begin{document}
  \title{Refined characteristic class and Conductor formula}
  \author{Haoyu Hu}
  \address{Haoyu Hu, IHES, Le Bois-Marie, 35 Rue de Chartres, 91440 Bures-sur-Yvette, France.}
  \email{ haoyu@ihes.fr \& huhaoyu@mail.nankai.edu.cn}
  \begin{abstract}
In this article, we prove a conductor formula in a geometric situation that generalizes the Grothendieck-Ogg-Shafarevich formula. Our approach uses the ramification theory of Abbes and Saito, and relies on Tsushima's refined characteristic class.
  \end{abstract}
  \maketitle
  \tableofcontents
  \section{Introduction}

  \subsection{}
This article is devoted to the proof of a conductor formula for $\ell$-adic sheaves in a geometric situation \eqref{c f intro form} which generalizes the classical Grothendieck-Ogg-Shafarevich formula (\cite{sga5} X 7.1) as well as the index formula of Saito (\cite{saito cc} 3.8). It uses the ramification theory developed by Abbes and Saito and it relies on a previous work of Tsushima, who proved a special case (\cite{ts2} 5.9).

\subsection{}\label{setting cond form intro}
  Let $k$ be a perfect field of characteristic $p>0$, $f:X\ra Y$ a proper flat morphism of smooth connected $k$-schemes and $d$ the dimension of $X$. We assume that $\dim Y=1$ and let $y$ be a closed point of $Y$, $\ol y$ a geometric point localized at $y$, $Y_{(\ol y)}$ the strict localization of $Y$ at $\ol y$ and $\ol\eta$ a geometric generic point of $Y_{(\ol y)}$. Put $W=Y-\{y\}$, $V=f^{-1}(W)$ and that $Q=f^{-1}(y)$. We assume that the canonical projection $f_V:V\ra W$ is smooth and $Q$ is a divisor with normal crossing on $X$. Let $D$ be a divisor with simple normal crossing on $X$ containing $S=Q_{\mathrm{red}}$ such that $D\cap V$ is a divisor with simple normal crossing relatively to $W$. We put $U=X-D$ and let $j:U\ra X$ be the canonical injection. We consider the diagram
  \begin{equation*}
  \xymatrix{\relax
  U\ar[r]^-{\nu}\ar[rd]_{f_U}&V\ar@{}|{\Box}[rd]\ar[r]
  \ar[d]^{f_V}&X\ar@{}|{\Box}[rd]\ar[d]^{f}&Q\ar[l]\ar[d]\\
  & W\ar[r]&Y&y\ar[l]}
\end{equation*}
where $\nu$ is the canonical injection and $f_U=f_V\circ \nu$.
We fix a prime number $\ell$ invertible in $k$, and an Artinian local $\mathbb{Z}_{\ell}$-algebra $\L$. Let $\sF$ be a locally constant and constructible sheaf of free $\L$-modules on $U$ such that
\begin{itemize}
  \item[(i)]
  $\sF$ is tamely ramified along the divisor $D\cap V$ relatively to $V$;
  \item[(ii)]
  the conductor $R$ of $\sF$ is effective with support contained in $S$ (\cite{rc} 8.10) and $\sF$ is isoclinic and clean along $D$ (\cite{rc} 8.22 and 8.23).
\end{itemize}

 Condition (i) implies that $f_V$ is universally locally acyclic relatively to $\nu_!(\sF)$ (\cite{sga4 1/2} Appendice to Th. Finitude, \cite{wrcb} 3.14). Since $f_V$ is proper, all cohomology groups of $\R f_{U!}(\sF)$ are locally constant and constructible on $W$. We put (\cite{sga4 1/2} Rapport 4.4)
\begin{eqnarray}
\rk_{\L}(\R\G_c(U_{\ol\eta},\sF|_{U_{\ol\eta}}))&=&\mathrm{Tr}(\id;\R\G_c(U_{\ol\eta},\sF|_{U_{\ol\eta}})),\nonumber\\
\mathrm{sw}_y(\R\G_c(U_{\ol\eta},\sF|_{U_{\ol\eta}}))&=&
\sum_{q\in\bZ}(-1)^q\mathrm{sw}_y(\R^q\G_c(U_{\ol\eta},\sF|_{U_{\ol\eta}})),\nonumber\\
\mathrm{dimtot}_y(\R\G_c(U_{\ol\eta},\sF|_{U_{\ol\eta}}))&=&
\rk_{\L}(\R\G_c(U_{\ol\eta},\sF|_{U_{\ol\eta}}))+\mathrm{sw}_y(\R\G_c(U_{\ol\eta},\sF|_{U_{\ol\eta}})),\nonumber
\end{eqnarray}
where $\mathrm{sw}_y(\R^q\G_c(U_{\ol\eta},\sF|_{U_{\ol\eta}}))$ denotes the Swan conductor of $\R^q\G_c(U_{\ol\eta},\sF|_{U_{\ol\eta}})$ at $y$.

We denote by
\begin{equation*}
\mathrm{T}^*X(\log D)=\mathbf{V}(\O^1_{X/k}(\log D)^{\vee})
\end{equation*}
the logarithmic cotangent bundle over $X$ and by $\sigma:X\ra\mathrm{T}^*X(\log D)$ zero section. Under the conditions (i) and (ii), Abbes and Saito defined the characteristic cycles of $\sF$, denoted by $CC(\sF)$, as a $d$-cycle on $\mathrm{T}^*X(\log D)$ (\cite{rc} 1.12; \cite{saito cc} 3.6; cf. \ref{charcycle}). The vertical part $CC^*(\sF)$ of $CC(\sF)$ is a $d$-cycle on $\mathrm{T}^*X(\log D)\ti_XS$ such that
\begin{equation*}
CC(\sF)=(-1)^d(\rk_{\L}(\sF)[\sigma(X)]+CC^*(\sF)).
\end{equation*}

\begin{theorem}\label{cond form intro}
We keep the notation and assumptions of {\rm \ref{setting cond form intro}} and assume moreover that $S=D$ (i.e., $U=V$) or that $\rk_{\L}(\sF)=1$. Then, for any section $s:X\ra \mathrm{T}^*X(\log D)$, we have the following equality in $\L$
\begin{equation}\label{c f intro form}
 \mathrm{dimtot}_y(\R\G_c(U_{\ol\eta},\sF|_{U_{\ol\eta}}))
-\rk_{\L}(\sF)\cdot\mathrm{dimtot}_y(\R\G_c(U_{\ol\eta},\L))=(-1)^{d+1}\deg(CC^*(\sF)\cap[s(X)]).
\end{equation}
\end{theorem}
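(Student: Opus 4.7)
The strategy is to use Tsushima's refined characteristic class as an intermediate object bridging the cohomological quantity on the left-hand side and the intersection-theoretic quantity on the right-hand side of \eqref{c f intro form}. Specifically, one should introduce a refined characteristic class $C^{\mathrm{ref}}_X(j_!\sF)\in\CH_0(S)_{\L}$, defined via a Lefschetz-Verdier-type trace formula applied to the identity cohomological correspondence on $j_!\sF$ (following Abbes-Saito-Tsushima), and prove in two steps that (a) its pushforward to $y$ computes the LHS, and (b) it agrees, up to the sign $(-1)^{d+1}$, with the intersection $CC^*(\sF)\cap[s(X)]$ on $\mathrm{T}^*X(\log D)$.

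\emph{Step 1: LHS as the degree of a refined class.} The tameness of $\sF$ along $D\cap V$, combined with the smoothness and properness of $f_V$, implies that $\R f_{U!}(\sF)$ is locally constant on $W$; in particular, all wild contributions to $\mathrm{dimtot}_y$ originate from $S=Q_{\mathrm{red}}$. The plan is to establish
\begin{equation*}
\mathrm{dimtot}_y(\R\G_c(U_{\ol\eta},\sF|_{U_{\ol\eta}}))-\rk_{\L}(\sF)\cdot\mathrm{dimtot}_y(\R\G_c(U_{\ol\eta},\L))=\deg\bigl(f_*C^{\mathrm{ref}}_X(j_!\sF)\bigr)
\end{equation*}
by writing both $\sF$ and the constant sheaf $\L$ into a single Lefschetz-Verdier computation on $X$: away from $S$, the local terms attached to $\sF$ and to $\L$ (weighted by $\rk_{\L}(\sF)$) cancel because of the tameness of $\sF$ on $V$, so only the contributions supported on $S$ remain, and these are precisely what assembles into the refined characteristic class. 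Functoriality of the trace formula under the proper morphism $f$ yields the equality above.

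\emph{Step 2: Geometric evaluation via $CC^*(\sF)$.} Under the cleanliness and isoclinicity hypotheses, Abbes-Saito give an explicit description of $CC(\sF)$ on $\mathrm{T}^*X(\log D)$, and the vertical part $CC^*(\sF)$ is supported over $S$. The plan is to prove
\begin{equation*}
C^{\mathrm{ref}}_X(j_!\sF)=(-1)^{d+1}\bigl(CC^*(\sF)\cap[s(X)]\bigr)\quad\text{in }\CH_0(S)_{\L},
\end{equation*}
the independence of the section $s$ being automatic once one knows that $CC^*(\sF)$ is supported vertically over $S$. Tsushima's theorem (\cite{ts2} 5.9) provides exactly this identification in a special case; the main body of the argument consists in extending his computation to the present situation, using the explicit local models for $CC(\sF)$ supplied by the cleanliness assumption. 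Combining Step 1 and Step 2, and taking degrees of $0$-cycles on $S$ (viewed in $\L$), yields \eqref{c f intro form}.

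\emph{Main obstacle.} The essential difficulty lies in Step 2: one must compare two a priori different constructions --- the refined characteristic class built from cohomological correspondences on dilatations of $\xx$ in the Abbes-Saito framework, and the intersection-theoretic class on the logarithmic cotangent bundle $\mathrm{T}^*X(\log D)$ --- and match them with the correct sign. The hypothesis ``$S=D$ or $\rk_{\L}(\sF)=1$'' enters precisely here: when $S=D$ the tame stratum $D\setminus S$ is empty, and the refined class reduces to a purely wild calculation that can be done on the blow-up at $S$; in the rank $1$ case, a character-theoretic description of $\sF$ reduces $CC^*(\sF)$ to a divisor-theoretic expression accessible to direct computation, and one can check the comparison by an Artin-Schreier-Witt reduction. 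The genuinely mixed case (higher rank with $S\subsetneq D$) appears to require a new idea to handle the interaction between wild ramification along $S$ and tame ramification along $D\setminus S$, which is why the theorem is formulated only under the stated restriction.
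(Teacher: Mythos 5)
Your two-step architecture --- interpose a refined characteristic class, relate it to the cohomological side via a Lefschetz--Verdier-type formula, and identify it with an intersection on $\mathrm{T}^*X(\log D)$ --- is indeed the paper's strategy, which in turn follows Tsushima. But your account of \emph{where and why} the restriction $S=D$ or $\rk_\Lambda(\sF)=1$ enters, which you correctly flag as the heart of the matter, does not match the actual argument, and this is worth correcting.

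In the paper the restriction is isolated to a single, very concrete technical lemma: the canonical map
\begin{equation*}
\lambda\colon\ \h^0_{S^{(R)}}\bigl(\xxr,\ \sHr\ot^L\sL^{(R)}\bigr)\ \longrightarrow\ \h^0_{X^{(R)}}\bigl(\xxr,\ \sHr\ot^L\sL^{(R)}\bigr)
\end{equation*}
must be injective. That injectivity is what lets one descend the class $\alpha(j_!\sF)$ from $X^{(R)}$-support to $S^{(R)}$-support on the dilatation $\xxr$ and then compare it with the cycle class $[X]_a$. When $\rk_\Lambda(\sF)=1$, the sheaf $\wt\nu_*\sH_0$ is locally constant on $\vvlp$, which forces $\lambda$ to be an isomorphism (Tsushima); when $T\cap S=\emptyset$, $T^{(R)}$ and $S^{(R)}$ are disjoint closed subschemes so $\h^0_{D^{(R)}}$ splits as the direct sum $\h^0_{S^{(R)}}\oplus\h^0_{T^{(R)}}$, giving injectivity directly. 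The mechanisms you suggest --- ``a purely wild calculation on the blow-up at $S$'' for $S=D$, and an ``Artin--Schreier--Witt reduction'' for rank $1$ --- are not used. In particular Artin--Schreier--Witt is a genuinely different proof technique and would not mesh with the cohomological-correspondence machinery on dilatations that the rest of the argument is built from.

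One smaller imprecision in your Step 1: the paper does not directly equate a $\mathrm{dimtot}$ difference with a pushed-forward degree. It first cancels the rank contributions using tameness ($\rk_\Lambda(\R\Gamma_c(U_{\ol\eta},\sF))=\rk_\Lambda(\sF)\cdot\rk_\Lambda(\R\Gamma_c(U_{\ol\eta},\Lambda))$, from Illusie), reducing to a statement about Swan conductors alone, and only then applies Tsushima's localized Lefschetz--Verdier formula, which pushes $C_S(j_!\sF)$ forward to the localized characteristic class $C^0_{\{y\}}(j_{W!}\R\pi_{U!}\sF)$ on the curve $Y$, where it is computed to equal minus the Swan conductor. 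The ``local terms away from $S$ cancel by tameness'' picture you sketch is morally reasonable, but the cancellation actually happens downstairs on $Y$, not pointwise on $X$. With these two points adjusted your outline aligns with the paper.
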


The case where $\rk_{\L}(\sF)=1$ is due to Tsushima (\cite{ts2} 5.9). Although we follow the same lines for sheaves of higher ranks, the situation is technically more involved. Our approach requires the assumption that $S=D$.

\subsection{}
To prove \ref{cond form intro}, we follow the strategy of Saito for the proof of an index formula for $\ell$-adic sheaves on proper smooth varieties \cite{saito cc}. The latter can be schematically divided into two steps. The first step uses the theory of cohomological correspondences due to Grothendieck and Verdier to associate a cohomology class to the $\ell$-adic sheaf, called the {\it characteristic class}, that computes its Euler-Poincar\'e characteristic by the Lefschetz-Verdier formula (\cite{sga5} III). The second step is more geometric. It consists of computing the characteristic class as an intersection product using the ramification theory developed by Abbes and Saito~\cite{as i}.

\subsection{}
The analogous approach for the proof of the conductor formula \eqref{c f intro form} was started by Tsushima in \cite{ts2}. He refined the characteristic class of an $\ell$-adic sheaf into a cohomology class with support in the wild locus, called in this article the {\it refined characteristic class}. He proved a Lefschetz-Verdier formula for this class (\cite{ts2} 5.4) which amounts to say that it commutes with proper push-forward. On a smooth curve, the refined characteristic class gives the Swan conductor (\cite{ts2} 4.1). The main goal of this article is to prove an intersection formula that computes the refined characteristic class.

\subsection{}
 More precisely, with the notation and assumptions of \ref{setting cond form intro}, the refined characteristic class $C_S(j_!(\sF))$ of $j_!(\sF)$ is defined as an element in $\h^0_S(X,\sK_X)$. The Lefschetz-Verdier formula implies the following relation
 \begin{equation*}
  \mathrm{sw}_y(\R\G_c(U_{\ol\eta},\sF|_{U_{\ol\eta}}))
-\rk_{\L}(\sF)\cdot\mathrm{sw}_y(\R\G_c(U_{\ol\eta},\L))=-f_*(C_S(j_!(\sF))-\rk_{\L}(\sF)\cdot C_S(j_!(\L_U)))
\end{equation*}
in $\h^0_{\{y\}}(Y,\sK_Y)\isora\L$, where $f_*$ in the left hand side is the proper push-forward $\h^0_S(X\sK_X)\ra\h^0_{\{y\}}(Y,\sK_Y)$ (cf. \ref{X to Y not}). Assume that $D=S$ or that $\rk_{\L}(\sF)=1$. Then, our main result is the following formula (\ref{keytheorem llcc lchern})
\begin{eqnarray*}
  &&C_{S}(j_!(\sF))-\rk_{\L}(\sF)\cdot C_{S}(j_!(\L_U))\\
  &=&(-1)^{d}\rk_{\L}(\sF)\cdot c_d\left(\OX(\log D)\ot_{\sO_X}\sO_X(R)-\OX(\log D)\right)^X_{S}\cap[X]\in \h^0_{S}(X,\sK_X),\nonumber
\end{eqnarray*}
where $c_d(-)^X_S$ is a bivariant class built of localized Chern classes (cf. \ref{locchern}). The right hand side is the image of a zero cycle class in $\CH_0(S)$, whose degree is $(-1)^d\deg(CC^*(\sF)\cap[s(X)])$ (cf. \ref{pre-cond form}), which implies theorem \ref{cond form intro}.

\subsection{}
Beyond Tsushima's work already mentioned, there have been several works on the conductor formula. Abbes gave a conductor formula for an $\ell$-adic sheaf on an arithmetic surface, under the condition that the sheaf has no fierce ramification \cite{a gos}. Vidal proved that the alternating sum of the Swan conductor of the cohomology groups with compact support of an $\ell$-adic sheaf on a normal scheme over a local field only depends on its rank and its wild ramification \cite{vidal}. For an $\ell$-adic sheaf on a smooth scheme over a local field of mixed characteristic, Kato and Saito defined its Swan class, which is a $0$-cycle class supported on the wild locus, that computes the Swan conductor of the cohomology groups with compact support \cite{ksihes}. In a recent work \cite{ccen}, Saito defined the characteristic cycle of an $\ell$-adic sheaf on a smooth surface as a cycle on the cotangent bundle without the cleanliness condition. When the surface is fibered over a smooth curve, he proved a conductor formula conjectured by Deligne (\cite{ccen} 3.16).

\subsection{}
This article is organized as follows. After preliminaries on \'etale cohomology, we briefly introduce the cohomological correspondences and the characteristic class of an $\ell$-adic sheaf in $\S 4$. We recall Abbes and Saito's ramification theory in $\S 5$ and review the definition of clean sheaves and the characteristic cycle in $\S 6$. We give the definition of Tsushima's refined characteristic class and introduce the corresponding Lefschetz-Verdier formula in $\S 7$. The last section is devoted to the proof of the conductor formula.

\subsection*{Acknowledgement}
This article is a part of the author's thesis at Universit\'e Paris-Sud and Nankai University. The author would like to express his deepest gratitude to his supervisors  Ahmed Abbes and Lei Fu for leading him to this area and for patiently guiding him in solving this problem. The author would also like to thank professor Takeshi Saito for his stimulating suggestions toward to this article.
This work is developed during a long visit to IHES supported by Fonds Chern and Fondation Math\'ematiques Jacques Hadamard. The author is grateful to these institutions for their support.

\section{Notation}

\subsection{}\label{fixnotation}
In this article, $k$ denotes a perfect field of characteristic $p>0$. We fix a prime number $\ell$ invertible in $k$, an Artinian local $\mathbb{Z}_{\ell}$-algebra $\L$ and a non-trivial additive character $\psi:\mathbb{F}_p\ra \L^{\ti}$. All $k$-schemes are assumed to be separated and of finite type over $\spec (k)$.

\subsection{}\label{notsheaf}
For a $k$-scheme $X$, we denote by $D(X,\L)$ the derived category of complexes of \'etale sheaves of $\L$-modules on $X$ and by $\dctf(X,\L)$ (resp. $D^-(X,\L)$, resp. $D^+(X,\L)$ and resp. $D^b_c(X,\L)$) its full subcategory consisting of objects bounded of finite tor-dimension with constructible cohomologies (resp. of objects bounded above, resp. of objects bounded below and resp. of objects bounded with constructible cohomologies). We denote by $\sK_X$ the complex $\R f^!\L$, where $f:X\ra\spec (k)$ is the structure map and by $\bD_X$ the functor $\R\shom(- ,\sK_X)$ on $\dctf(X,\L)$. For two $k$-schemes $X$ and $Y$, and an \'etale sheaf of $\L$-modules $\sF$ (resp. $\sG$) on $X$ (resp. $Y$), $\sF\bxt\sG$ denotes the sheaf $\pr_1^*\sF\ot\pr^*_2\sG$ on $X\ti_kY$.

\subsection{}
Let $X$ be a scheme and $\sE$ a sheaf of $\sO_X$-modules of finite type. Following (\cite{egaii} 1.7.8), we denote by ${\mathbf V}(\sE)$ the vector bundle $\spec(\mathrm{Sym}_{\sO_X}(\sE))$ over $X$.

\subsection{}\label{locchern}
Let $X$ be a $k$-scheme of equidimension $e$, $Z$ a closed subscheme of $X$, $\sE_1$ and $\sE_0$ locally free $\sO_X$-modules of rank $e$, $f:\sE_1\ra\sE_0$ an $\sO_X$-linear map which is an isomorphism on $X-Z$, and $\sE=[\sE_1\xra{f}\sE_0]$ the complex such that $\sE_0$ is in degree $0$. For $i>0$, we put (\cite{ksann} 3.24)
\begin{equation*}
  c_i(\sE_0-\sE_1)^X_Z=\sum^{\min(e,i-1)}_{j=0}c_j(\sE_1)\cap {c_{i-j}}^X_Z(\sE)
\end{equation*}
as a bivariant class, where $c(\sE_1)$ denotes the Chern class of $\sE_1$ (\cite{fulton} 3.2) and ${c}^X_Z(\sE)$ the localized Chern class of $\sE$ (\cite{fulton} 18.1).

\section{Preliminaries on \'etale cohomology}
\subsection{}
Let
\begin{equation}\label{general bcd}
  \xymatrix{\relax
  X'\ar[r]^{f'}\ar[d]_{g'}&Y'\ar[d]^g\\
  X\ar[r]^f&Y}
\end{equation}
be a commutative diagram of $k$-schemes. We have the base change maps (\cite{sga4} XVII 4.1.5, XVIII 3.1.13.2)
\begin{eqnarray}
 g^*\R f_* &\ra & \R f'_* g'^*, \label{bclow*} \\
 \R f'_!\R g'^! &\ra & \R g^!\R f_!. \label{all!bc}
\end{eqnarray}

Assume that diagram \eqref{general bcd} is Cartesian. There exists a canonical base change isomorphism (the proper base change theorem) (\cite{sga4} XVII 5.2.6)
\begin{equation}\label{pbc}
  g^*\R f_!\isora \R f'_! g'^*.
\end{equation}
There exists a canonical isomorphism of functors (\cite{sga4} XVIII 3.1.12.3)
\begin{equation}\label{low*up!}
\R f'_*\R g'^!\isora\R g^!\R f_*.
\end{equation}
There exists a canonical morphism of functors  (\cite{sga4} XVIII 3.1.14.2)
\begin{equation}
  g'^*\R f^!\ra \R f'^!g^*.
\end{equation}
It is defined as the adjoint of the composed morphisms
\begin{equation*}
  \R f'_!g'^*\R f^!\isora g^*\R f_!\R f^!\ra g^*,
\end{equation*}
where the first arrow is induced by the inverse of the proper base change theorem and the
second arrow is induced by the adjunction map $\R f_!\R f^!\ra \id$.

\subsection{}
Let $f:X\ra Y$ be a morphism of $k$-schemes and $\sF$ (resp. $\sG$) an object of $\dctf(X,\L)$ (resp. $\dctf(Y,\L)$). There exists a canonical isomorphism (the projection formula) (\cite{sga4} XVII 5.2.9)
\begin{equation}\label{projection formula}
\R f_!(f^*\sG\ot^L \sF)\isora \sG\ot^L\R f_!\sF.
\end{equation}

\subsection{}
For a morphism $f:X\ra Y$ of $k$-schemes and two objects $\sF$ and $\sG$ of $\dctf(Y,\L)$, we have a canonical map
\begin{equation}\label{gcup}
f^*\sF\ot^L\R f^!\sG\ra \R f^!(\sF\ot^L\sG),
\end{equation}
defined as follows. By the projection formula \eqref{projection formula}, we have a canonical isomorphism
\begin{equation*}
\R f_!(f^*\sF\ot^L\R f^!\sG)\isora \sF\ot^L\R f_!(\R f^!\sG).
\end{equation*}
Composing with the adjunction map $\R f_!(\R f^!\sG)\ra \sG$, we obtain a map $\R f_!(f^*\sF\ot^L\R f^!\sG)\ra \sF\ot^L\sG$, which gives \eqref{gcup} by adjunction.

If $f$ is a closed immersion, \eqref{gcup} induces a cup product
\begin{equation}\label{cap prod}
 \h^i(X, f^*\sF)\ti \h^j_{X}(Y, \sG)\xra{\cup} \h^{i+j}_{X}(Y, \sF\ot^L\sG).
\end{equation}

%When $f$ is a closed immersion, we have a canonical map $\R f^!\ra f^*$ composed of
%\begin{equation}\label{!to*}
% \R f^!\isora (f^*f_*)\R f^!\isora f^*(f_*\R f^!)\ra f^*
%\end{equation}
%where the third arrow is induced by the canonical map $f_*\R f^!\ra\id$.
%Then, the following diagram is commutative
%\begin{equation}\label{!to*diag}
%  \xymatrix{\relax
%  f^*\sF\ot^L\R f^!\sG\ar[r]^{\eqref{gcup}}\ar[d]&\R f^!(\sF\ot^L\sG)\ar[d]\\
%  f^*\sF\ot^L f^*\sG\ar[r]^-{\sim}&f^*(\sF\ot^L\sG)}
%\end{equation}
%where the vertical arrows are induced by \eqref{!to*}.

\subsection{}\label{loclemma}
Let $g:W\ra X$ and $f:X\ra Y$ be closed immersions of $k$-schemes, and $\sF$ and $\sG$ objects of $D_{\mathrm{ctf}}(Y,\L)$. Then, the following diagram is commutative
\begin{equation}\label{locdiag1}
  \xymatrix{\relax
  f^*\sF\ot^L g_*\R g^!(\R f^!\sG)\ar[d]\ar[r]^-{\sim}&g_*((fg)^*\sF\ot^L \R (fg)^!\sG)\ar[r]^-{\eqref{gcup}}&g_*\R g^!(\R f^!(\sF\ot^L\sG))\ar[d]\\
  f^*\sF\ot^L \R f^!\sG\ar[rr]^-{\eqref{gcup}}& &\R f^!(\sF\ot^L\sG)}
\end{equation}
where the vertical maps are induced by the adjunction map $g_*\R g^!\ra \id$, and the isomorphic map is induced by the projection formula \eqref{projection formula}.
Indeed, it is enough to show that the following diagram is commutative
\begin{equation*}
  \xymatrix{\relax
  f_*(f^*\sF\ot^L g_*\R g^!(\R f^!\sG))\ar[d]\ar[r]^-{\sim}&(fg)_*((fg)^*\sF\ot^L \R (fg)^!\sG)\ar[r]^-{\sim}&\sF\ot^L(fg)_*\R (fg)^!\sG\ar[d]\\
  f_*(f^*\sF\ot^L \R f^!\sG)\ar[r]^-{\sim}&\sF\ot^L f_*(\R f^!\sG) \ar[r]& \sF\ot^L\sG}
\end{equation*}
where the isomorphic maps are the projection formulae and the other maps are induced by adjunction. Since the composition of the upper horizontal maps is the projection formula
\begin{equation*}
  f_*(f^*\sF\ot^L g_*\R g^!(\R f^!\sG))\isora \sF\ot^L(fg)_*\R (fg)^!\sG,
\end{equation*}
we are reduced to show the following diagram is commutative
\begin{equation*}
  \xymatrix{\relax
  f_*(f^*\sF\ot^L (g_*\R g^!(\R f^!\sG)))\ar[d]_{\adj}\ar[r]^-{\sim}&\sF\ot^Lf_*(g_*\R g^! (\R f^!\sG))\ar[d]^{\adj}\\
  f_*(f^*\sF\ot^L \R f^!\sG)\ar[r]^-{\sim}&\sF\ot^L f_*(\R f^!\sG)}
\end{equation*}
which is obvious.

Diagram \eqref{locdiag1} induces a commutative diagram
\begin{equation}\label{locdiag}
\xymatrix{\relax
  \h^i(X,f^*\sF)\ti\h^j_W(Y,\sG)\ar[r]^-{\cup_W}\ar[d]&\h^{i+j}_W(Y,\sF\ot^L\sG)\ar[d]\\
  \h^i(X,f^*\sF)\ti\h^j_X(Y,\sG)\ar[r]^-{\cup}&\h^{i+j}_X(Y,\sF\ot^L\sG)}
\end{equation}
where $\cup_W$ is defined by the upper horizontal arrows of \eqref{locdiag1}.

\subsection{}
Let $f:X\ra Y$ be a morphism of $k$-schemes, $\sF$ an object of $D^-(X,\L)$ and $\sG$ an object of $D^+(Y,\L)$. We have a canonical isomorphism (\cite{sga4} XVIII 3.1.10, \cite{fu} 8.4)
\begin{equation}\label{adj!}
  \R f_*\R\shom(\sF, Rf^!\sG)\isora \R\shom(Rf_!\sF,\sG).
\end{equation}
Taking $\sG=\sK_Y$, we obtain an isomorphism (\ref{notsheaf})
\begin{equation}\label{adj!cor}
  \R f_*(\bD_X(\sF))\isora \bD_Y(\R f_!\sF).
\end{equation}

\subsection{}
For a morphism $f:X\ra Y$ of $k$-schemes and two objects $\sF$ and $\sG$ of $\dctf(Y,\L)$, we recall the definition of the canonical isomorphism (\cite{sga4} XVIII 3.1.12.2, \cite{fu} 8.4.7)
\begin{equation}\label{gpd}
  \R\shom(f^*\sF,\R f^!\sG)\isora \R f^!\R\shom(\sF,\sG).
\end{equation}
By the inverse of the projection formula \eqref{projection formula}, we have a canonical isomorphism
\begin{equation*}
\sF\ot^L\R f_!\R\shom(f^*\sF,Rf^!\sG)\isora \R f_!(f^*\sF\ot^L\R\shom(f^*\sF,\R f^!\sG)).
\end{equation*}
Composing with the canonical morphisms
\begin{equation*}
\R f_!(f^*\sF\ot^L\R\shom(f^*\sF,\R f^!\sG))\ra \R f_!\R f^!\sG\ra\sG,
\end{equation*}
we obtain a morphism
\begin{equation*}
\sF\ot^L\R f_!\R\shom(f^*\sF,\R f^!\sG)\ra \sG.
\end{equation*}
It induces the map \eqref{gpd} by adjunction. Taking $\sG=\sK_Y$, we obtain a canonical isomorphism
\begin{equation}
  \bD_X(f^*\sF)\isora \R f^!(\bD_Y(\sF)).
\end{equation}

\subsection{}
For two $k$-schemes $X$ and $Y$ and objects $\sF$ and $\sG$ of $\dctf(X,\L)$ and $\dctf(Y,\L)$, a canonical isomorphism
\begin{equation}\label{homot}
\R\shom(\pr^*_2\sG,\R\pr^!_1\sF)\isora \sF\bxt^L\bD_Y(\sG)
\end{equation}
is defined in (\cite{sga5} III 3.1.1).

Let $g:X\ra\ol X$ and $h:Y\ra\ol Y$ be open immersions. By \eqref{homot}, \eqref{adj!cor} and the K\"unneth formula, we have a canonical isomorphism on $\ol X\ti_k \ol Y$
\begin{eqnarray}\label{kunforH}
  \R\shom(\pr^*_2h_!\sG,\R\pr^!_1g_!\sF)&\isora& (g_!\sF)\bxt^L\bD_{\ol Y}(h_!\sG)\isora (g_!\sF)\bxt^L(\R h_*(\bD_{Y}(\sG)))\\
  &\isora& (g\ti 1)_!(\R(1\ti h)_*(\sF\bxt^L\bD_Y(\sG)))\nonumber\\
  &\isora& (g\ti 1)_!(\R(1\ti h)_*\R\shom(\pr^*_2\sG,\R\pr^!_1\sF)).\nonumber
\end{eqnarray}

\subsection{}
Let $f:X\ra Y$ be a flat morphism of $k$-schemes with fibers of equidimension $d$ and $\sF$ an object of $\dctf(Y,\L)$. We have a canonical trace map (\cite{sga4} XVIII 2.9)
\begin{equation*}
\mathrm{Tr}_f:\R f_!f^*\sF(d)[2d]\ra\sF.
\end{equation*}
Its adjoint
\begin{equation}\label{classmap}
  t_f:f^*\sF(d)[2d]\ra \R f^!\sF
\end{equation}
is called the class map (\cite{sga4} XVIII 3.2.3). If $f$ is smooth, $t_f$ \eqref{classmap} is an isomorphism (Poincar\'e duality) (\cite{sga4} XVIII 3.2.5, \cite{fu} 8.5.2).

\subsection{}
Let $f:X\ra Y$ be a morphism of smooth $k$-schemes with the same equidimension $d$. For any object $\sF$ of $\dctf(Y, \L)$, we recall the definition of the canonical map (\cite{as} (1.9))
\begin{equation}\label{samedim}
f^*\sF\ra \R f^!\sF.
\end{equation}
The map $f$ is the composition of the graph $\G_f:X\ra X\ti_kY$ of $f$ and the projection $\pr_2:X\ti_kY\ra Y$. Since $\G_f$ is a section of the projection $\pr_1:X\ti_kY\ra X$, there exists a canonical isomorphism $\L\ra \R f^!\L$ defined as the composition
\begin{equation*}
  \L\isora \R\G_f^!\R\pr_1^!\L\isora \R\G_f^!\L(d)[2d]\isora \R\G_f^!\R\pr_2^!\L\isora \R f^!\L,
\end{equation*}
where the second and the third arrows are induced by Poincar\'e duality. Then, the canonical map \eqref{gcup} induces \eqref{samedim}.

\subsection{}\label{cycle map def}
Let $V$ be a $k$-scheme, $Z$ an integral closed subscheme of $V$ of equidimension $d$. The canonical class map $\L(d)[2d]\ra \sK_Z$ induces a morphism
\begin{equation}\label{cycmap}
\h^0(Z,\L)\ra \h^{-2d}_Z(V,\sK_V(-d)).
\end{equation}
The cycle class $[Z]\in \h^{-2d}_Z(V,\sK_V(-d))$ is defined as the image of $1\in \h^0(Z,\L)$ by the map \eqref{cycmap}. We obtain a homomorphism $Z_d(V)\ra \h^{-2d}(V,\sK_V(-d))$, where $Z_d(V)$ denotes the free abelian group generated by integral closed subschemes of equidimension $d$ of $V$. This map factors through the Chow group $\CH_d(V)$, and induces the cycle map
\begin{equation}\label{cycle map cl}
\cl:\CH_d(V)\ra \h^{-2d}(V,\sK_V(-d)).
\end{equation}

If $V$ is a closed subscheme of a smooth $k$-scheme $X$ of dimension $d+c$, by Poincar\'e duality, we have
\begin{equation*}
  \h^{-2d}(V,\sK_V(-d))\isora\h^{-2d}_V(X,\sK_X(-d))\isora\h^{2c}_V(X,\L(c)).
\end{equation*}

Let $Y$ be another smooth $k$-scheme of dimension $e+c$, $f:X\ra Y$ a $k$-morphism and $W$ a closed subscheme of $Y$ such that $f^{-1}(W)$ is a closed subscheme of $V$. By (\cite{ksann} 2.1.2), we have a commutative diagram
\begin{equation}\label{up*gysin!}
  \xymatrix{\relax
  \CH_e(W)\ar[r]^-{\cl}\ar[d]_{f^!}& \h^{2c}_W(Y,\L(c))\ar[d]^{f^*}\\
  \CH_d(V)\ar[r]^-{\cl}&\h^{2c}_V(X,\L(c))}
\end{equation}
where the map $f^!$ denotes the refined Gysin homomorphism (\cite{fulton} 6.6).

\subsection{}\label{acyclic}
Let $X$ be a $k$-scheme, $Z$ a closed subscheme of $X$, $V=X-Z$ the complementary open subscheme of $Z$ in $X$, $U$ an open subscheme of $X$,
$i:Z\ra X$, $j:V\ra X$, $i_U:Z\cap U\ra U$ and $j_U:U\cap V\ra U$ the canonical injections, and $\sF$ an object of $\dctf(X,\L)$. Assume that for any integer $q$, $\sH^ q(\sF)|_U$ is locally constant and constructible. Then, we have a canonical isomorphism (\cite{fu} 6.5.5)
\begin{equation*}
  (\sF|_U)\ot^L\R j_{U*}\L\isora \R j_{U*}j^*_U(\sF|_U).
\end{equation*}
Since $\R i_U^!\R j_{U*}=0$ \eqref{low*up!}, we have $\R i_U^!((\sF|_U)\ot^L\R j_{U*}\L)=0$. In particular, for any integer $q$, the canonical map
\begin{equation*}
  \h^ q_{Z-U}(X,\sF\ot^L\R j_*(\L_V))\ra \h^q_Z(X,\sF\ot^L\R j_*(\L_V))
\end{equation*}
is an isomorphism (\cite{ts2} Lemma 3.1).

\subsection{}\label{delta}
Let $X$ be a $k$-scheme, $Z$ a closed subscheme of $X$, $V=X-Z$ the complementary open subscheme of $Z$ in $X$, $i:Z\ra X$ and $j:V\ra X$ the canonical injections, and $\sF$ an object of $D^-(X,\L)$. Applying the functor $\R i^!(\sF\ot^L-)$ to the distinguished triangle $i_*\R i^!\L_X\ra\L_X\ra \R j_*\L_V\ra$, we obtain a distinguished triangle (\cite{ts2} Lemma 3.5)
 \begin{equation}\label{tridelta}
  i^*\sF\ot^L\R i^!(\L_X)\xra{a} \R i^!\sF\xra{b} \R i^!(\sF\ot^L\R j_*(\L_V))\ra,
 \end{equation}
 where $a$ is the map \eqref{gcup}. We denote the functor $\R i^!(-\ot^L\R j_*(\L_V)):D^-(X,\L)\ra D^-(Z,\L)$ by $\Delta_i(-)$.

\subsection{}\label{killrg}
Let $X$ be a $k$-scheme, $\d:X\ra\xx$ the diagonal map and $S$ a closed subscheme of $X$ such that its complement is dense in $X$. Consider the  composed map
\begin{equation}\label{aslcciso}
  \sK_X\ra\R\d^!\d_*\sK_X\ra\Delta_{\d}(\d_*(\sK_X)),
\end{equation}
where the first arrow is the adjunction map and the second arrow is $b$ in \eqref{tridelta}. If $X$ is an equidimensional smooth $k$-scheme, the following map induced by \eqref{aslcciso} is an isomorphism (\cite{as} 5.2)
\begin{equation}\label{smooth lcc sim}
  \h^0_S(X,\sK_X)\isora\h^0_S(X,\Delta_{\d}(\d_*(\sK_X))).
\end{equation}

\section{Cohomological correspondences}

\begin{definition}[\cite{sga5} III 3.2, \cite{as},1.2.1]\label{cohcor}
Let $X$ and $Y$ be two $k$-schemes. A {\it correspondence} between $X$ and $Y$ is a $k$-scheme $C$ equipped with $k$-morphisms $c_1:C\ra X$ and $c_2:C\ra Y$. Let $\sF$ and $\sG$ be objects of $\dctf(X,\L)$ and $\dctf(Y,\L)$, respectively. A {\it cohomological correspondence} is a morphism $u:c_2^*\sG\ra \R c^!_1\sF$ from $\sG$ to $\sF$ on $C$.
\end{definition}

We switch the factors compared to (\cite{sga5} III 3.2).

\subsection{}\label{procc}
Let $X$ and $Y$ be two $k$-schemes and $(C,c_1:C\ra X, c_2:C\ra Y)$ a correspondence between $X$ and $Y$. We denote by $c$ the map $(c_1,c_2):C\ra X\ti_k Y$ and by $\pr_1:X\ti_k Y\ra X$ and $\pr_2:X\ti_k Y\ra Y$ the canonical projections.
Let $\sF$ and $\sG$ be objects of $\dctf(X,\L)$ and $\dctf(Y,\L)$, respectively. We have a canonical isomorphism \eqref{gpd}
\begin{equation*}
\R\shom(c_2^*\sG,\R c^!_1\sF)\isora \R c^!\R\shom(\pr_2^*\sG,\R\pr_1^!\sF).
\end{equation*}
Taking global sections on $C$, we get a canonical isomorphism
\begin{equation}\label{cohcoreq}
  \Hom(c^*_2\sG,\R c^!_1\sF)\isora\h^0(C,\R c^!\R\shom(\pr_2^*\sG,R\pr_1^!\sF)),
\end{equation}
which shows that cohomological correspondences $u:c_2^*\sG\ra \R c^!_1\sF$ are in one to one correspond with morphisms $\L_C\ra \R c^!\R\shom(\pr_2^*\sG,\R\pr_1^!\sF)$, and hence with morphisms $\R c_!(\L_C)\ra \R\shom(\pr_2^*\sG,\R\pr_1^!\sF)$ by adjunction.

If $c:C\ra X\ti_kY$ is a closed immersion, and $X$ and $Y$ are smooth $k$-schemes of dimension $d$, we have
\begin{equation*}
\h^0(C,\R c^!\R\shom(\pr_2^*\sG,\R\pr_1^!\sF))\isora\h^{2d}_C(X\ti_kY, \R\shom(\pr_2^*\sG,\pr_1^*\sF)(d)).
\end{equation*}
If we further assume that $\sF$ and $\sG$ are sheaves of free $\L$-modules and that $\sG$ is locally constant and constructible. Then the canonical map $c^*\R\shom(\pr^*_2\sG,\pr_1^*\sF)\ra\R\shom(c^*_2\sG,c_1^*\sF)$
is an isomorphism, and we have
\begin{equation*}
 \Hom(c^*_2\sG,c_1^*\sF)\isora\h^0(C,c^*\R\shom(\pr^*_2\sG,\pr_1^*\sF)).
\end{equation*}
Then, the cycle class map $\CH_d(C)\ra \h^{2d}_C(X\ti_k Y,\L(d))$ induces a pairing
\begin{eqnarray*}
 \CH_d(C)\ot\Hom(c^*_2\sG,c^*_1\sF)&\ra & \h^{2d}_C(X\ti_k Y,\L(d))\ot \h^0(C, c^*\R\shom(\pr^*_2\sG,\pr_1^*\sF))\\
 &\xra{\cup}& \h^0_C(X\ti_k Y, \R\shom(\pr_2^*\sG,\R\pr^!_1\sF))=\Hom(c^*_2\sG,\R c^!_1\sF).
\end{eqnarray*}
In this case, for a cycle class $\G\in \CH_d(C)$ and a homomorphism $\gamma:c^*_2\sG\ra c^*_1\sF$, the pair $(\G,\g)$ induces a cohomological correspondence $u(\G,\g)$ from $\sF$ to $\sG$ on $C$.

\subsection{}
We consider a commutative diagram of $k$-schemes
\begin{equation}\label{bc cf}
  \xymatrix{\relax
  X\ar[d]_f&C\ar[r]^{c_2}\ar[l]_{c_1}\ar[d]_h&Y\ar[d]^g\\
  X'&C'\ar[r]^{c'_2}\ar[l]_{c'_1}&Y'}
\end{equation}
and let $\sF$ and $\sG$ be objects of $\dctf(X,\L)$ and $\dctf(Y,\L)$, respectively.
By \eqref{adj!cor}, \eqref{homot} and the K\"unneth formula, we have a canonical isomorphism
\begin{equation}\label{f ti g*}
\R(f\ti g)_*\R\shom(\pr_2^*\sG,\R\pr_1^!\sF)\isora \R\shom(\pr_2^*\R g_!\sG, \R\pr_1^!\R f_*\sF).
\end{equation}
Diagram \eqref{bc cf} gives a commutative diagram
\begin{equation*}
  \xymatrix{\relax
  C\ar[d]_h\ar[r]^-c&X\ti_k Y\ar[d]^{f\ti g}\\
  C'\ar[r]^-{c'}&X'\ti_kY'}
\end{equation*}
We assume that $f$, $g$ and $h$ are proper. A cohomology correspondence $u:c^*_2\sG\ra \R c^!_1\sF$ is identified with a map $u:\L_C\ra \R c^!\R\shom(\pr_2^*\sG,\R\pr^!_1\sF)$ (\ref{procc}). It induces a map
\begin{equation}\label{pullcc}
\L_{C'}\ra \R h_*\R c^!\R\shom(\pr_2^*\sG,\R\pr^!_1\sF).
\end{equation}
The base change map \eqref{all!bc} gives
\begin{equation}\label{probc}
  \R h_*\R c^!=\R h_!\R c^!\ra \R c'^!\R(f\ti g)_!= \R c'^!\R(f\ti g)_*.
\end{equation}
Composing \eqref{f ti g*}, \eqref{pullcc} and \eqref{probc}, we obtain a map
\begin{equation*}
  \L_{C'}\ra \R c'^!R(f\ti g)_*R\shom(\pr_2^*\sG,R\pr^!_1\sF)\isora \R c'^!\R\shom(\pr^*_2\R g_!\sG,\R\pr^!_1\R f_*\sF).
\end{equation*}
By \eqref{cohcoreq}, we obtain a map
\begin{equation*}
 c'^*_2\R g_!\sG=c'^*_2\R g_*\sG\ra \R c'^!_1\R f_*\sF,
\end{equation*}
which is a correspondence form $\R g_*\sG$ to $\R f_*\sF$ on $C'$, that we denote by $h_*(u)$ and call the push-forward of $u$ by $h$. The map $h_*(u)$ is equal to the composition of the maps
\begin{equation*}
  c'^*_2\R g_*\sG\ra \R h_*c_2^*\sG\xra{\R h_*(u)}\R h_*\R c_1^!\sF\ra \R c'^!_1\R f_*\sF,
\end{equation*}
where the left and right arrows are the base change maps.

\subsection{}\label{openpull}
We consider a commutative diagram of $k$-schemes
\begin{equation}\label{opdiag}
  \xymatrix{\relax
  U\ar[d]_{j_U}&C'\ar[r]^{c'_2}\ar[d]^{j_C}\ar[l]_{c'_1}&V\ar[d]^{j_V}\\
  X&C\ar[r]^{c_2}\ar[l]_{c_1}&Y}
\end{equation}
where all the vertical arrows are open immersions. Let $\sF$ and $\sG$ be objects of $\dctf(X,\L)$ and $\dctf(Y,\L)$, respectively, and $u:c_2^*\sG\ra \R c_1^!\sF$  a cohomological correspondence on $C$. Denote by $\sF_U$ and $\sG_V$ the restrictions of $\sF$ and $\sG$ to $U$ and $V$, respectively. We have $\R j_C^!=j_C^*$ and $\R j_U^!=j_U^*$. Hence, the restriction $u'$ of $u$ to $C'$ defines a cohomological correspondence
\begin{equation}
  u':c'^*_2(\sG_V)=j_C^*c_2^*\sG\xra{j_C^*(u)}j^*_C\R c^!_1\sF=\R c'^!_1(\sF_U).
\end{equation}
We denote by $j$ the map $j_U\ti j_V:U\ti_kV\ra X\ti_kY$, by $c$ the map $(c_1,c_2):C\ra X\ti_kY$ and by $c'$ the map $(c'_1,c'_2):C'\ra U\ti_kV$. We have a commutative diagram
\begin{equation}
  \xymatrix{\relax
  C'\ar[r]^-{c'}\ar[d]_{j_C}&U\ti_kV\ar[d]^j\\
  C\ar[r]^-{c}&X\ti_kY}
  \end{equation}
The base change map \eqref{all!bc} gives a canonical morphism
\begin{equation}\label{bc!!}
\R c'_!(\L_{C'})=\R c'_!\R j_C^!(\L_{C}) \ra \R j^!\R c_!(\L_C)=j^*\R c_!(\L_C).
\end{equation}
Put
\begin{eqnarray}
  \sH'&=&\R \shom(\pr_2^*(\sG_V),\R \pr_1^!(\sF_U))\ \ \ {\rm on}\ \ \ U\ti_kV,\\
  \sH&=&\R \shom(\pr_2^*\sG,\R \pr_1^!\sF)\ \ \ {\rm on}\ \ \ X\ti_kY.
\end{eqnarray}
By \eqref{procc}, we identify a cohomological correspondence $u:c_2^*\sG\ra \R c_1^!\sF$ with a map $u: \L_C\ra \R c^!\sH$ and also with the associated map $u: \R c_!(\L_C)\ra\sH$. We identify the restriction $u':c'^*_2(\sG_V)\ra \R c'^!_1(\sF_U)$ of $u$ with a map $u':\L_{C'}\ra \R c'^!(\sH')$ also with the associated map $u':\R c'_!(\L_{C'})\ra\sH'$. Since $\R j_U^!=j_U^*$ and $\R j^!=j^*$, by \eqref{gpd}, we have a canonical isomorphism
\begin{equation}\label{jHH'iso}
  j^*\sH\isora\sH'.
\end{equation}

\begin{lemma}[\cite{as} Lemma 1.2.2]
We take the notation and assumptions of \eqref{openpull}. Then,
\begin{itemize}
  \item[1.]
  The map $u':\L_{C'}\ra \R c'^!(\sH')$ coincides with the restriction of $u:\L_C\ra \R c^!\sH$ to $C'$ by the composed isomorphism $j^*_C \R c^!\sH=\R j_C^!\R c^!\sH\ra \R c'^!\R j^!\sH=\R c'^!j^*\sH=\R c'^!(\sH')$.
  \item[2.]
  The following diagram is commutative
  \begin{equation}
    \xymatrix{\relax
    j^*\R c_!(\L_C)\ar[r]^{j^*u}&j^*\sH\ar[d]^{\eqref{jHH'iso}}\\
    \R c'_!(\L_{C'})\ar[u]^{\eqref{bc!!}}\ar[r]^{u'}&\sH'}
    \end{equation}

\end{itemize}
\end{lemma}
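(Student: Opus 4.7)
The strategy is to unwind the bijection \eqref{cohcoreq} and verify both claims by a formal diagram chase. The crucial input is that $j$, $j_U$, $j_V$ and $j_C$ are all open immersions, so $\R j^!=j^*$, $\R j_C^!=j_C^*$, and the base change maps \eqref{all!bc} involving these immersions are isomorphisms by proper base change \eqref{pbc}. In particular, the square
\begin{equation*}
  \xymatrix{\relax
  C'\ar[r]^-{c'}\ar[d]_{j_C}&U\ti_k V\ar[d]^j\\
  C\ar[r]^-c&X\ti_k Y}
\end{equation*}
from \ref{openpull} is Cartesian, so the upper-shriek base change $\R j_C^!\R c^!\sH\ra \R c'^!\R j^!\sH$ is an isomorphism, and \eqref{bc!!} is its adjoint counterpart for lower shrieks.

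For part 1, I would interpret the composed isomorphism $j_C^*\R c^!\sH\isora \R c'^!(\sH')$ as the composition of this upper-shriek base change with the isomorphism $j^*\sH\isora \sH'$ of \eqref{jHH'iso}. The map $u'$ is, by construction, the image of the cohomological correspondence $j_C^*(u):c_2'^*(\sG_V)\ra \R c_1'^!(\sF_U)$ under the bijection \eqref{cohcoreq} applied to $(C',c_1',c_2')$. Hence the assertion is exactly the naturality of the isomorphism \eqref{gpd} with respect to the open pullback $j_C^*$. Since \eqref{gpd} is built from the projection formula \eqref{projection formula} and the $\R\shom/\ot^L$ adjunction, this reduces to the compatibility of those two constructions with $j_C^*$ and $j^*$: since $j_C$ is étale, $j_C^*$ commutes with $\R\shom$ and $\ot^L$, and these commutations fit into the usual coherence diagrams with the projection formula.

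Part 2 then follows from part 1 by the adjunction $(\R c'_!,\R c'^!)$. Under this adjunction, the bottom arrow $u':\R c'_!(\L_{C'})\ra \sH'$ corresponds to $u':\L_{C'}\ra \R c'^!(\sH')$, while the composition $\R c'_!(\L_{C'})\xra{\eqref{bc!!}} j^*\R c_!(\L_C)\xra{j^*u} j^*\sH\xra{\eqref{jHH'iso}}\sH'$ corresponds (using that \eqref{bc!!} is adjoint to the upper-shriek base change appearing in the isomorphism of part 1) to the restriction of $u:\L_C\ra \R c^!\sH$ to $C'$ composed with the isomorphism $j_C^*\R c^!\sH\isora \R c'^!(\sH')$. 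By part 1 these two maps $\L_{C'}\ra \R c'^!(\sH')$ coincide, and the diagram commutes.

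The main obstacle is the verification that the isomorphism \eqref{gpd} is compatible with open pullback, which is a purely formal diagram chase through its definition in terms of the projection formula, the $\R\shom/\ot^L$ adjunction and the $(\R c_!,\R c^!)$ adjunction. No geometric argument is needed: the open-immersion hypothesis collapses every relevant base change map to an isomorphism, so the proof is long but mechanical, as indicated by the way the authors invoke \cite{as} Lemma~1.2.2 directly.
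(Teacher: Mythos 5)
The paper itself gives no proof of this statement — it simply cites \cite{as} Lemma 1.2.2 — so there is no in-paper argument to compare against; the remarks below evaluate your proposal on its own terms.

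Your overall strategy (unwind \eqref{cohcoreq}, reduce everything to the naturality of \eqref{gpd} under restriction to opens, and deduce part 2 from part 1 by the $(\R c'_!,\R c'^!)$ adjunction) is the right one and matches what one expects the formal verification in \cite{as} to do. However, there is a genuine error in your justification: you assert that the square
\begin{equation*}
  \xymatrix{\relax
  C'\ar[r]^-{c'}\ar[d]_{j_C}&U\ti_k V\ar[d]^j\\
  C\ar[r]^-c&X\ti_k Y}
\end{equation*}
is Cartesian. This is \emph{not} an assumption in \ref{openpull}; Cartesianness of the right square of \eqref{opdiag} is precisely the \emph{extra} hypothesis that appears only in the subsequent Lemma~\ref{0extlemma}. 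Concretely, $C'$ is merely an open subscheme of $C$ whose image under $c$ lies in $U\ti_kV$, so $C'$ may be strictly smaller than $c^{-1}(U\ti_kV)$. As a consequence your claim that \eqref{bc!!} is an isomorphism (``adjoint counterpart for lower shrieks'') is false in general: setting $C''=c^{-1}(U\ti_kV)$, one has $j^*\R c_!\L_C\cong\R c''_!\L_{C''}$ by proper base change, which is not $\R c'_!\L_{C'}$ unless $C'=C''$. By contrast, the \emph{upper-shriek} base change $j_C^*\R c^!\sH\ra\R c'^!j^*\sH$ that appears in the statement of part~1 \emph{is} an isomorphism, but for a different reason than you give: factor $j_C=j_{C''}\circ i$ through the open immersion $i:C'\ra C''$; the $C''$-square is Cartesian, giving an isomorphism $j_{C''}^*\R c^!\sH\isora\R c''^!j^*\sH$, and applying $i^*=\R i^!$ yields the isomorphism for $C'$.

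Fortuitously, neither error invalidates the conclusions: part~1 only uses that the upper-shriek base change is an isomorphism (which holds, as above), and part~2 is a pure commutativity statement that never requires \eqref{bc!!} to be invertible. So after correcting the justification of the upper-shriek isomorphism and deleting the false claims about Cartesianness and invertibility of \eqref{bc!!}, your sketch is sound — though the two naturality checks (``\eqref{gpd} is compatible with open pullback'' for part~1, and ``\eqref{bc!!} corresponds under adjunction to the upper-shriek base change'' for part~2) remain asserted rather than verified, which is the real content of the cited lemma.
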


\begin{lemma}[\cite{as} Lemma 1.2.3]\label{0extlemma}
Consider diagram \eqref{opdiag} again and assume moreover that its right square is Cartesian. Let $\sF'$ and $\sG'$ be objects of $\dctf(U,\L)$ and $\dctf(V,\L)$, and $u':c'^*_2(\sG')\ra \R c'^!_1(\sF')$ a cohomological correspondence on $C'$. Then, there exist a unique cohomological correspondence $u:c_2^*j_{V!}(\sG')\ra \R c_1^!j_{U!}(\sF')$ on $C$ such that its restriction to $C'$ is $u'$.
\end{lemma}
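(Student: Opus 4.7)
The plan is to construct a canonical bijection $\Phi$ between cohomological correspondences on $C$ of the form $c_2^* j_{V!}(\sG') \ra \R c_1^! j_{U!}(\sF')$ and cohomological correspondences on $C'$ of the form $c'^*_2(\sG') \ra \R c'^!_1(\sF')$, and then to verify that $\Phi$ agrees with the restriction operation described just before the lemma. Existence and uniqueness of $u$ will then follow by taking $u = \Phi^{-1}(u')$.

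To build $\Phi$, I would first exploit that the right square of diagram \eqref{opdiag} is Cartesian. Proper base change \eqref{pbc} applied to the open immersion $j_V$ gives a canonical isomorphism $c_2^* j_{V!}(\sG') \isora j_{C!}(c'^*_2 \sG')$. Under the adjunction $(j_{C!}, j_C^*)$, morphisms $j_{C!}(c'^*_2 \sG') \ra \R c_1^! j_{U!}(\sF')$ are in canonical bijection with morphisms $c'^*_2 \sG' \ra j_C^* \R c_1^! j_{U!}(\sF')$. Since $j_C$ and $j_U$ are open immersions, $j_C^* = \R j_C^!$ and $\R j_U^! j_{U!} = \id$; combining this with the commutativity $c_1 \circ j_C = j_U \circ c'_1$ of the left square and with composition of upper shrieks, I obtain
\[ j_C^* \R c_1^! j_{U!}(\sF') = \R(c_1 j_C)^!(j_{U!}\sF') = \R c'^!_1 \R j_U^! j_{U!}(\sF') = \R c'^!_1(\sF'). \]
The concatenation of these three identifications defines the bijection $\Phi$.

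It remains to verify that $\Phi(u)$ coincides with the restriction of $u$ to $C'$ in the sense of part~1 of the preceding lemma. This reduces to a naturality check: applying the adjunction unit $\id \ra j_C^* j_{C!}$ to the domain and identifying the target via composition of shrieks recovers the pullback $j_C^*(u)$ modulo the isomorphism \eqref{jHH'iso}. The only real, but modest, obstacle is bookkeeping: one must check that the proper base change isomorphism for the right square of \eqref{opdiag} is compatible with the base change map \eqref{bc!!} used to define the restriction. Once this compatibility is confirmed, $\Phi(u) = u'$, and both existence and uniqueness of the extension are established.
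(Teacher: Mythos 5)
Your argument is correct and matches the approach of Abbes--Saito (Lemma 1.2.3), which the paper cites without reproving: base change across the Cartesian right square rewrites the source as $j_{C!}(c'^*_2\sG')$, and the $(j_{C!},j_C^*)$ adjunction together with the identification $j_C^*\R c_1^! j_{U!}\isora\R c'^!_1$ give the required bijection. One small misattribution in your final paragraph: the compatibility you need to verify is not with the map \eqref{bc!!} (that map concerns $\R c'_!$ and is only relevant to part 2 of the lemma preceding \ref{0extlemma}); rather you must check that the base-change isomorphism $c_2^*j_{V!}(\sG')\isora j_{C!}(c'^*_2\sG')$ followed by the unit isomorphism $\id\isora j_C^*j_{C!}$ for the open immersion $j_C$ agrees with the direct identification $j_C^*c_2^*j_{V!}(\sG')=c'^*_2(\sG')$ that \ref{openpull} uses to define the restriction --- this is a routine commuting triangle of canonical maps, so the substance of your argument is unaffected.
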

We call $u$ in (\ref{0extlemma}) the {\it extension by zero} of $u'$ and denote it by $j_{C!}u'$.

\subsection{}
Let $X$, $X'$, $Y$, $Y'$ be smooth equidimensional $k$-schemes such that $\dim X=\dim X'$ and $\dim Y=\dim Y'$, $f:X'\ra X$ and $g:Y'\ra Y$ morphisms of $k$-schemes, $(C,c_1:C\ra X, c_2:C\ra Y)$ a correspondence between $X$ and $Y$, and $\sF$ and $\sG$ objects of $\dctf(X,\L)$ and $\dctf(X,\L)$, respectively. We denote by $c$ the map $(c_1,c_2):C\ra X\ti_k Y$. By \eqref{gpd} and \eqref{samedim}, we have a map
\begin{eqnarray}
(f\ti g)^*\R \shom(\pr_2^*\sG,\R \pr_1^!\sF)&\ra&\R (f\ti g)^!\R \shom(\pr_2^*\sG,\R \pr_1^!\sF) \label{f!f*}\\
&\ra&\R \shom(\pr^*_2g^*\sG,\R \pr^!_1\R f^!\sF).\nonumber
\end{eqnarray}
Let $u:c_2^*\sG\ra \R c_1^!\sF$ be a cohomological correspondence on $C$, that we identify with a map $u:\R c_!\L_{C}\ra \R \shom(\pr_2^*\sG,\R \pr_1^!\sF)$. We define a correspondence $c'=(c'_1,c'_2):C'\ra X'\ti_k Y'$ by the Cartesian diagram
\begin{equation*}
  \xymatrix{\relax
  C'\ar@{}[rd]|{\Box}\ar[r]^-{c'}\ar[d]_h&X'\ti_k Y'\ar[d]^{f\ti g}\\
  C\ar[r]^-{c}&X\ti_kY}
\end{equation*}
By the proper base change theorem, the base change map $(f\ti g)^*\R c_!\L_C\ra \R c'_!\L_{C'}$ is an isomorphism. The composed map
\begin{eqnarray*}
  \R c'_!(\L_{C'})\xra{\sim} (f\ti g)^*\R c_!(\L_C)&\ra &(f\ti g)^*\R \shom(\pr_2^*\sG,\R \pr_1^!\sF)\\
  &\ra&\R \shom(\pr_2^*g^*\sG,\R \pr_1^!\R f^!\sF),
\end{eqnarray*}
where the first arrow is the inverse of the base change isomorphism, corresponds to a cohomological correspondence
\begin{equation*}
(f\ti g)^*(u):c'^*_2g^*\sG\ra \R c'^!_1\R f^!\sF,
\end{equation*}
 called the pull-back of $u$ by $f\ti g$.

\subsection{}
Let $X$ be a $k$-scheme, $\sF$ an object of $\dctf(X,\L)$. We denote by $\d:X\ra\xx$ the diagonal map and put $\sH=\R \shom(\pr^*_2\sF,\R \pr_1^!\sF)$. The canonical isomorphism
$\sH\isora\sF\bxt^L\bD_X(\sF)$ \eqref{homot} induces an isomorphism
\begin{equation*}
  \d^*\sH\isora\sF\ot^L\bD_X(\sF).
\end{equation*}
Composed with the evaluation map $\sF\ot^L\bD_X(\sF)\ra\sK_X$, we get a map
\begin{equation}\label{ev cf}
  \ev:\d^*\sH\ra\sK_X,
\end{equation}
that we also call the evaluation map.

Let $C$ be a closed subscheme of $\xx$ and $u$ a cohomological correspondence of $\sF$ on $C$. We denote by $c:C\ra\xx$ the canonical injection. By (\ref{procc}), $u$ corresponds to a section
\begin{equation*}
  u\in \h^0(C,\R c^!\sH)=\h^0_C(\xx,\sH).
\end{equation*}
We call the image of $u$ by the following composed maps
\begin{equation*}
\h^0_C(\xx,\sH)\xra{\d^*}\h^0_{C\cap X}(X,\d^*\sH)\xra{\ev}\h^0_{C\cap X}(X,\sK_X)
\end{equation*}
the {\it characteristic class of the cohomological correspondence} $u$, and denote it by $C(\sF,C,u)\in \h^0_{C\cap X}(X,\sK_X)$ (\cite{sga5} III, \cite{as} 2.1.8). If $C=\d(X)$, and $u:\sF\ra\sF$ is an endomorphism (resp. the identity of $\sF$), we abbreviate the notation of the characteristic class of $u$ by $C(\sF,u)\in \h^0(X,\sK_X)$ (resp. $C(\sF)\in\h^0(X,\sK_X)$, and call it the {\it characteristic class of} $\sF$).

\subsection{}
Let $X$ be a $k$-scheme, $U$ an open subscheme of $X$, and $\sF$ an object of $\dctf(U,\L)$. We denote by $j:U\ra X$ the canonical open immersion, and by $\d:X\ra\xx$ and $\d_U:U\ra U\ti_kU$ the diagonal maps. Put
\begin{eqnarray}
  \sH&=&\R \shom(\pr^*_2\sF,\R \pr_1^!\sF)\ \ \ {\rm on}\ \ \ U\ti_kU,\nonumber\\
  \ol\sH&=&\R \shom(\pr^*_2j_!\sF,\R \pr_1^!j_!\sF)\ \ \ {\rm on}\ \ \ X\ti_kX.\nonumber
\end{eqnarray}
By \eqref{homot} and the projection formula for $j_!$ \eqref{projection formula}, we have
\begin{equation*}
  \d^*(\ol\sH)\iso(j_!\sF)\ot^L\bD(j_!\sF)\iso j_!(\sF\ot^L\bD(\sF))\iso j_!(\d_U^*\sH).
\end{equation*}
Then, the evaluation map $\ev:\d_U^*\sH\ra\sK_U$ \eqref{ev cf} induces a map
\begin{equation*}
\ev':\d^*(\ol\sH)\ra j_!(\sK_U).
\end{equation*}

Let $C$ be a closed subscheme of $\uu$ and $u$ a cohomological correspondence of $\sF$ on $C$. We denote by $\ol C$ the closure of $C$ in $\xx$. We have a commutative diagram
\begin{equation*}
  \xymatrix{\relax
  C\ar[r]^-{c}\ar[d]_{j_C}&\uu\ar[d]^{j\ti j}\\
  \ol C\ar[r]^-{\ol c}& \xx}
\end{equation*}
where $j$, $c$ and $\ol c$ are the canonical injections. Assume $C=(X\ti_kU)\cap\ol C$. The extension by zero $j_{C!}(u)$ of $u$ \eqref{0extlemma} corresponds, by \eqref{procc}, to a section
\begin{equation*}
 j_{C!}(u)\in \h^ 0(\ol C, \R \ol c^!(\ol\sH))=\h^ 0_{\ol C}(\xx,\ol\sH).
\end{equation*}
We denote by $C_!(j_!\sF,\ol C,j_{C!}(u))$ the image of $j_{C!}(u)$ by the composed map
\begin{equation*}
  \h^ 0_{\ol C}(\xx,\ol\sH)\xra{\d^*}\h^ 0_{\ol C\cap X}(X,\d^*(\ol\sH))\xra{\ev'}\h^ 0_{\ol C\cap X}(X,j_!(\sK_U)).
\end{equation*}
By (\cite{as} 2.1.7), the characteristic class $C(j_{U!}\sF,\ol C,j_{C!}(u))\in \h^0_{\ol C\cap X}(X,\sK_X)$ is the canonical image of
$C_!(j_!\sF,\ol C,j_{C!}(u))$.

\subsection{}\label{lcc as}
Let $X$ be an equidimensional smooth $k$-scheme, $S$ a closed subscheme of $X$, $U=X-S$ the complementary open subscheme of $S$ in $X$ that we assume to be dense in $X$, $j:U\ra X$ the canonical injection, $\d:X\ra\xx$ the diagonal map, and $\sF$ an object of $\dctf(X,\L)$ such that for any integer $q$, $\sH^q(\sF)|_U$ is locally constant. Put $\sH=\R \shom(\pr_2^*\sF,\R \pr_1^!\sF)$ on $\xx$, we have $\R \d^!\sH\isora \R \shom(\sF,\sF)$ \eqref{gpd}. Hence $\id_{\sF}\in\End(\sF)$ corresponds to a map $\L_X\ra \R \d^!\sH$, and by adjunction to a map $\d_*\L_X\ra\sH$. Let $\ev':\sH\ra\d_*\sK_X$ be the adjoint of the evaluation maps \eqref{ev cf}. Applying the functor $\D_{\d}$ \eqref{killrg} to the composition of the two morphisms above, we obtain a map
\begin{equation}\label{keylcc}
\D_{\d}(\d_*(\L_X))\ra \D_{\d}(\sH)\xra{\D_{\d}(\ev')} \D_{\d}(\d_*(\sK_X)).
\end{equation}
Since, for each integer $q$, $\sH^q(\sH)|_{\uu}$ is locally constant and constructible, by (\ref{acyclic}), we have
\begin{equation}\label{isolcckey}
  \h^0_S(X,\D_{\d}(\sH))\isora \h^0(X,\D_{\d}(\sH)).
\end{equation}
Hence, the canonical map $\L_X=\R \d^!\d_*\L_X\ra\D_{\d}(\d_*\L_X)$, \eqref{keylcc}, \eqref{isolcckey} and the inverse of \eqref{smooth lcc sim} define a map
\begin{equation*}
\h^0(X,\L_X)\ra\h^0(X,\D_{\d}(\sH))\isora\h^0_S(X,\D_{\d}(\sH))\ra  \h^0_S(X,\D_{\d}(\d_*\sK_X))\isora\h^0_S(X,\sK_X).
\end{equation*}
We denote the image of $1\in \h^0(X,\L_X)$ in $\h^0_S(X,\sK_X)$ by $C^0_S(\sF)$ and call it the {\it localized characteristic class of $\sF$} (\cite{as} 5.2). In \cite{ts2}, the author gave another definition of the localized characteristic class and proved that the two definitions are equivalent.

\section{Ramification of $\ell$-adic sheaves}

\subsection{}\label{note}
Let $K$ be a complete discrete valuation field, $\sO_K$ the integer ring, $F$ the residue field of $\sO_K$, $\ol K$ a separable closure of $K$, and $G_K$ the Galois group of $\ol K$ over $K$. Abbes and Saito defined two decreasing filtrations $\gk^r$ and $\gkl^r$ ($r\in\bQ_{>0}$) of $G_K$ by closed normal subgroups called the ramification filtration and the logarithmic ramification filtration, respectively (\cite{as i}, 3.1, 3.2).
We denote by $\gkl^0$ the inertia subgroup of $\gk$. For any $r\in \bQ_{\geqslant 0}$, we put
\begin{equation*}
\gkl^{r+}=\ol{\bigcup_{s\in\bQ_{>r}}\gkl^s} \quad\mathrm{and}\quad \grl=\gkl^r\big/\gkl^{r+}.
\end{equation*}
By (\cite{as i} 3.15), $P=\gkl^{0+}$ is the wild inertia subgroup of $G_{K}$, i.e. the $p$-Sylow subgroup of $G^0_{K,\log}$. For every rational number $r>0$, the group $\grl$ is abelian and is contained in the center of $P/\gkl^{r+}$ (\cite{as ii} Theorem 1).

\subsection{}\label{bounded cf}
Let $L$ be a finite separable extension of $K$. For a rational number $r\geqslant 0$, we say that the logarithmic ramification of $L/K$ is bounded by $r$ (resp. by $r+$) if $\gkl^r$ (resp. $\gkl^{r+}$) acts trivially on $\Hom_K(L, \ol K)$ via its action on $\ol K$. The {\it logarithmic conductor} $c$ of $L/K$ is defined as the infimum of rational numbers $r>0$ such that the logarithmic ramification of $L/K$ is bounded by $r$. Then $c$ is a rational number and the logarithmic ramification of $L/K$ is bounded by $c+$ (\cite{as i} 9.5). If $c>0$, the logarithmic ramification of $L/K$ is not bounded by $c$.

\begin{lemma}[\cite{katz} 1.1]
Let $M$ be a $\L$-module on which $P=\gkl^{0+}$ acts $\L$-linearly through a finite discrete quotient, say by $\r: P\ra \aut_{\L}(M)$. Then,
\begin{itemize}
\item[(i)]
The module $M$ has a unique direct sum decomposition
\begin{equation}\label{slopedecom cf}
M=\bp_{r\in\bQ_{\geqslant0}} M^{(r)}
\end{equation}
into $P$-stable submodules $M^{(r)}$, such that $M^{(0)}=M^P$ and for every $r>0$,
\begin{equation*}
(M^{(r)})^{\gkl^r}=0\quad \mathrm{and}\quad (M^{(r)})^{\gkl^{r+}}=M^{(r)}.
\end{equation*}

\item[(ii)] If $r>0$, then $M^{(r)}=0$ for all but the finitely many values of $r$ for which $\r(\gkl^{r+})\neq\r(\gkl^r)$.

\item[(iii)] For any $r\geqslant 0$, the functor $M\ma M^{(r)}$ is exact.

\item[(iv)] For $M$, $N$ as above, we have $\Hom_{P-\mathrm{Mod}}(M^{(r)}, N^{(r')})=0$ if $r\neq r'$.

\end{itemize}
\end{lemma}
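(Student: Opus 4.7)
The plan is to reduce to an idempotent decomposition in the group algebra $\L[\bar P]$, where $\bar P = \rho(P)$. Since $P$ is pro-$p$ and $\rho$ factors through a finite discrete quotient, $\bar P$ is a finite $p$-group. Because $\L$ is an Artinian local $\bZ_\ell$-algebra with $\ell \ne p$, the integer $p$ is invertible in $\L$, so for every normal subgroup $A \triangleleft \bar P$ the element $e_A = |A|^{-1}\sum_{a \in A} a$ is a central idempotent of $\L[\bar P]$ with $e_A M = M^A$; the key formal identity is $e_A e_B = e_A$ whenever $A \supseteq B$, proved by a direct count.

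The combinatorial input is that $s \mapsto \rho(\gkl^s)$ takes only finitely many distinct (normal) subgroup values on $\bQ_{>0}$, so there are finitely many breaks $r_1 < \cdots < r_n$, and $\rho(\gkl^s) = \{1\}$ for $s \gg 0$; this immediately yields (ii). Setting $B_0 = \bar P$ and $B_i = \rho(\gkl^{r_i+})$ for $1 \leq i \leq n$ gives a strictly decreasing chain $B_0 \supsetneq B_1 \supsetneq \cdots \supsetneq B_n = \{1\}$ with $\rho(\gkl^{r_i}) = B_{i-1}$ (by left-continuity of the filtration between consecutive breaks). I would then define
\begin{equation*}
f_0 = e_{B_0}, \qquad f_i = e_{B_i} - e_{B_{i-1}} \text{ for } 1 \leq i \leq n,
\end{equation*}
and verify by straightforward manipulation using $e_A e_B = e_A$ for $A \supseteq B$ that the $f_i$ are pairwise orthogonal central idempotents summing to $1$. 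Setting $M^{(0)} = f_0 M$, $M^{(r_i)} = f_i M$, and $M^{(r)} = 0$ for other $r > 0$, this gives the decomposition \eqref{slopedecom cf}: $\gkl^{r_i+}$ acts trivially on $M^{(r_i)}$ because $M^{(r_i)} \subseteq M^{B_i}$, while the relation $f_i e_{B_{i-1}} = 0$ forces $(M^{(r_i)})^{\gkl^{r_i}} = (M^{(r_i)})^{B_{i-1}} = 0$, proving (i). Uniqueness follows by characterising $\bigoplus_{r \leq r_i} M^{(r)} = M^{\gkl^{r_i+}}$ and $\bigoplus_{r < r_i} M^{(r)} = M^{\gkl^{r_i}}$.

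For (iii), $M \mapsto M^{(r)}$ is multiplication by a central idempotent of $\L[\bar P]$ chosen in a common finite quotient through which a given short exact sequence acts, hence exact. For (iv), given a $P$-equivariant $\phi : M^{(r)} \to N^{(r')}$ with, say, $r < r'$, the image lies in $(N^{(r')})^{\gkl^{r+}} \subseteq (N^{(r')})^{\gkl^{r'}} = 0$ (interpreting $\gkl^{0+}$ as $P$ when $r=0$); the symmetric case $r > r'$ reduces to the same argument after replacing invariants by coinvariants, which coincide because $|\bar P|$ is a power of $p$ invertible in $\L$. The main technical point is the combinatorial bookkeeping on the image filtration: checking left-continuity of $s \mapsto \rho(\gkl^s)$ so that $\rho(\gkl^{r_i}) = B_{i-1}$, and then verifying the orthogonality and unitality of the $f_i$ by direct manipulation in the group ring.
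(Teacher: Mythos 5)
The paper does not prove this lemma; it cites it directly as Katz \cite{katz} 1.1. Your argument reconstructs Katz's standard break decomposition via averaging idempotents in $\L[\bar P]$ essentially verbatim, and is correct: the only nontrivial input you flag, namely the left-continuity $\rho(\gkl^{r_i})=B_{i-1}$ needed so that $f_i=e_{B_i}-e_{B_{i-1}}$ kills exactly the right piece, is precisely the hypothesis on the filtration in Katz's formulation, and is the property of the Abbes--Saito logarithmic filtration that the paper implicitly assumes by citing Katz. One small remark on (iv): your ``coinvariants'' detour for $r>r'$ is unnecessary; since a $P$-equivariant $\L$-linear map $\phi$ commutes with the central idempotents $f_r$, one has $\phi(M^{(r)})=\phi(f_rM)\subseteq f_rN$, while $N^{(r')}=f_{r'}N$, and $f_rf_{r'}=0$ gives $\phi=0$ uniformly for $r\neq r'$.
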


\subsection{}\label{slope center decomp cf}
The decomposition \eqref{slopedecom cf} is called the {\em slope decomposition} of $M$. The values $r\geqslant0$ for which $M^{(r)}\neq 0$ are called the {\em slopes} of $M$. We say that $M$ is {\it isoclinic} if it has only one slope. If $M$ is isoclinic of slope $r>0$, we have a canonical central character decomposition
\begin{equation*}
  M=\oplus_{\chi}M_{\chi},
\end{equation*}
where the sum runs over finite characters $\chi:\mathrm{Gr}^r_{\log}G_K\ra\L_{\chi}^{\ti}$ such that $\L_{\chi}$ is a finite \'etale $\L$-algebra (\cite{rc} 6.7).

\subsection{}\label{rsw intro cf}
We assume that $K$ has characteristic $p$ and that $F$ is of finite type over $k$.
Let $\O^1_{F}(\log)$ be the $F$-vector space
\begin{equation*}
\O^1_F(\log)=(\O^1_{F/k}\oplus(F\ot_{\mathbb{Z}} K^{\ti}))/(\mathrm{d} \bar a-\bar a\ot a\,;\,a\in\sO_K^{\ti}),
\end{equation*}
where $\ol a$ denotes the residue class of an element $a\in F$. We denote by $\sO_{\ol K}$ the integral closure of $\sO_K$ in $\ol K$, $\ol F$ the residue field of $\sO_{\ol K}$ and by $v$ the valuation of $\ol K$ normalized by $v(K^{\ti})=\mathbb Z$. For a rational number $r$, we put $\mathfrak{m}^r_{\ol K}$ (resp. $\mathfrak{m}^{r+}_{\ol K}$) the set of elements of $\ol K$ such that $v(x)\geqslant r$ (resp. $v(x)>r$). For any rational number $r>0$, $\mathrm{Gr}^r_{\log}G_K$ is a $\mathbb{F}_p$-vector space, and there exists a canonical injective homomorphism, called the {\em refined Swan conductor} (\cite{saito cc} 1.24),
\begin{equation}\label{rsw llcc}
\mathrm{rsw}:\Hom_{\mathbb{F}_p}(\mathrm{Gr}^r_{\log}G_K,\mathbb{F}_p)\ra \O^1_F(\log)\ot_F\mathfrak{m}^{-r}_{\ol K}/\mathfrak{m}^{-r+}_{\ol K}.
\end{equation}

\subsection{}\label{dilatation}
Let $X$ be a smooth $k$-scheme, $D$ a divisor with simple normal crossing on $X$, $\{D_i\}_{i\in I}$ the irreducible components of $D$. A {\it rational divisor on} $X$ {\it with support in} $D$ is an element $R=\sum_{i\in I}r_iD_i$ of the $\mathbb{Q}$-vector space generated by $\{D_i\}_{i\in I}$. We say that $R$ is {\it effective} if $r_i\ddy 0$ for all $i$. We call {\it generic points} of $R$ the generic points of the $D_i$'s such that $r_i\neq 0$. We denote by $\lfloor nR\rfloor$ the divisor $\sum_{i\in I}\lfloor nr_i\rfloor D_i$ on $X$, where $\lfloor nr_i\rfloor$ is the integral part of $nr_i$. For two rational divisors $R$ and $R'$ with support in $D$, we say that $R'$ is {\it bigger} than $R$ and use the notation $R'\ddy R$ if $R'-R$ is effective.

Let $u:P\ra X$ a smooth separated morphism of finite type, $s:X\ra P$ a section of $u$ and $R$ an effective rational divisor with support on $D$. Put $U=X-D$ and denote by $j:U\ra X$ and $j_P:P_U=u^{-1}(U)\ra P$ the canonical injections and by $\sI_X$ the ideal sheaf of $\sO_P$ associated to $s$. We call {\it dilatation of} $P$ {\it along} $s$ {\it of thickening} $R$ and denote by $P^{(R)}$ the affine scheme over $P$ defined by the quasi-coherent sub-$\sO_P$-algebra of $j_{P*}(\sO_{P_U})$
\begin{equation}\label{dilalg}
  \sum_{n\ddy 0}u^*(\sO_X(\lfloor nR\rfloor))\cdot\sI_X^n.
\end{equation}
The image of the algebra \eqref{dilalg} by the surjective homomorphism $j_{P*}(\sO_{P_U})\ra s_* j_*(\sO_U)$ is canonically isomorphic to $s_*(\sO_X)$. Hence we have a canonical section
\begin{equation*}
  s^{(R)}:X\ra P^{(R)}
\end{equation*}
lifting $s$ (\cite{rc} 5.26).

\subsection{}\label{xstarx}
In the following of this section, let $X$ be a smooth $k$-scheme, $D$ a divisor with simple normal crossing on $X$, $\{D_i\}_{i\in I}$ the irreducible components of $D$, and $j:U=X-D\ra X$ the canonical injection. We denote by $\xxlb_i$ the blow-up of $\xx$ along $D_i\ti_i D_i$, by $(X\rtimes_kX)_i$ the complement of the proper transform of $D\ti_kX$
in $\xxlb_i$ and by $(\xxlp)_i$ the complement of the proper transform of $D_i\ti_kX$ and $X\ti_k D_i$ in $\xxlb_i$. We denote by $\xxlb$ the fiber product of $\{\xxlb_i\}_{i\in I}$ over $\xx$, which is also the blow-up of $\xx$ along $\{D_i\ti_kD_i\}_{i\in I}$ (\cite{saito cc} \S 2.3). We denote by $X\rtimes_k X$ the fiber product of $\{(X\rtimes_kX)_i\}_{i\in I}$ that we call {\it the left-framed self-product of $X$ along $D$}. We denote by $\xxlp$ the fiber product of $\{(\xxlp)_i\}_{i\in I}$ over $\xx$, which is the open subscheme of $\xxlb$ obtained by removing the strict transforms of $D\ti_kX$ and $X\ti_kD$ in $\xxlb$, that we call {\it the framed self-product of} $X$ {\it along} $D$ (\cite{rc} 5.22).

By the universality of the blow-up, the diagonal map $\d:X\ra\xx$ induces closed immersions that we denote by
\begin{equation*}
\d':X\ra\xxlb\ \ \ {\rm and}\ \ \ \wt\d:X\ra\xxlp.
\end{equation*}
We consider $\xxlp$ as an $X$-scheme by the second projection. This projection is smooth
(\cite{saito cc} \S 2.3).

We denote by $D'_i$ the pull-back of $\d(D_i)$ by the canonical projection $\xxlb_i\ra\xx$ and by $D'$ the pull-back of $\d(D)$ by the canonical projection $\xxlb\ra\xx$. By definition, $D'_i\ra D_i$ is a $\mathbb{P}^1$-bundle. For a subset $J$ of $I$, we put $D_J=\bigcap_{i\in J}D_i$ and denote by $n_J$ the cardinality of $J$. Since $\xxlb$ is the fiber product of $\{\xxlb_i\}_{i\in I}$ over $\xx$, $D'$ is the union of $(\mathbb{P}^1)^{n_J}$-bundles over $D_J$ (\cite{ts2} 3.12).

We denote by $\wt D_i$ the pull-back of $\d(D_i)$ by the canonical projection $(\xxlp)_i\ra\xx$ and by $\wt D$ the pull-back of $\d(D)$ by the canonical projection $\xxlp\ra\xx$. By definition, $\wt D_i\ra D_i$ is a $\mathbb{G}_m$-bundle. Since $\xxlp$ is the fiber product of $\{(\xxlp)_i\}_{i\in I}$ over $\xx$, $\wt D$ is the union of $(\mathbb{G}_m)^{n_J}$-bundles over $D_J$ (\cite{ts1} 2.1).
\subsection{}\label{xxr}
For any effective rational divisor $R$ on $X$ with support on $D$, we denote by $\xxr$ the dilatation of $\xxlp$ along $\wt\d$ of thickening $R$ (\ref{dilatation} and \ref{xstarx}). If we consider $\xxlp$ as an $X$-scheme by the first projection, then the dilatation of $\xxlp$ along $\wt\d$ of thickening $R$ is equal to $\xxr$ (\cite{rc} 5.31). There is a canonical morphism
\begin{equation*}
  \d^{(R)}:X\ra\xxr
\end{equation*}
lifting $\wt\d$, and a canonical open immersion
\begin{equation*}
  j^{(R)}:\uu\ra\xxr.
\end{equation*}
Moreover, the following diagram
\begin{equation}\label{xxrcar}
  \xymatrix{\relax
  U\ar@{}[rd]|{\Box}\ar[r]^-{\d_U}\ar[d]_j&\uu\ar[d]^{j^{(R)}}\\
  X\ar[r]^-{\d^{(R)}}&\xxr}
\end{equation}
is Cartesian.

If $R$ has integral coefficients, then the canonical projection $(\xx)^{(R)}\ra X$ is smooth (\cite{rc} 4.6) and we have a canonical $R$-isomorphism (\cite{rc} 4.6.1)
\begin{equation}\label{vectorbundle xxr}
\xxr\ti_X R\isora \mathbf{V}(\Omega^1_{X/k}(\log D)\ot_{\sO_X}\sO_X(R))\ti_XR.
\end{equation}

\subsection{}\label{sheaf bound}
Let $\sF$ be a locally constant constructible sheaf of $\L$-modules on $U$, $R$ an effective rational divisor on $X$ with support on $D$, and $\ol x$ a geometric point of $X$. Put $\sH=\shom(\pr_2^*\sF,\pr_1^*\sF)$ on $\uu$. Then the base change map
\begin{equation}\label{alpha cf}
  \a:\d^{(R)*}j^{(R)}_*(\sH)\ra j_*\d^*_U(\sH)=j_*(\send(\sF))
\end{equation}
relatively to the Cartesian diagram \eqref{xxrcar} is injective (\cite{rc} 8.2). We say that the {\it ramification of} $\sF$ at $\ol x$ {\it is bounded by} $R+$ (\cite{rc} 8.3) if $\sF$ satisfies the following equivalent conditions (\cite{rc} 8.2):
\begin{itemize}
  \item[(i)]
  The stalk $\a_{\ol x}$ of the morphism $\a$ \eqref{alpha cf} at $\ol x$ is an isomorphism.
  \item[(ii)]
  The image of $\id_{\sF}$ in $j_*(\send(\sF))_{\ol x}$ is contained in the image of $\a_{\ol x}$.
\end{itemize}
We say that the {\it ramification of} $\sF$ {\it along} $D$ {\it is bounded by} $R+$ (\cite{rc} 8.3) if the ramification of $\sF$ at $\ol x$ is bounded by $R+$ for every geometric point $\ol x\in X$.

\subsection{}\label{bounded conductor}
Let $\sF$ be a locally constant constructible sheaf of $\L$-modules on $U$, $R$ an effective rational divisor on $X$ with support in $D$, $\xi$ a generic point of $D$, $\ol\xi$ a geometric point of $X$ above $\xi$, $X_{(\ol\xi)}$ the corresponding strictly local scheme, $\eta$ its generic point and $r$ the multiplicity of $R$ at $\xi$.
Then the following conditions are equivalent (\cite{rc} 8.8):
 \begin{itemize}
   \item[(i)]
   The ramification of $\sF$ at $\ol\xi$ is bounded by $R+$.
   \item[(ii)]
   The sheaf $\sF|_{\eta}$ is trivialized by a finite \'etale connected covering $\eta'$ of $\eta$ such that the logarithmic ramification of $\eta'/\eta$ is bounded by $r+$ (\ref{bounded cf}).
 \end{itemize}

The {\it conductor} of $\sF$ at $\xi$ is defined to be the minimum of the set of rational numbers $r\ddy 0$ such that $\sF|_{\eta}$ is trivialized by a finite \'etale connected covering $\eta'$ of $\eta$ and that the logarithmic ramification of $\eta'/\eta$ is bounded by $r+$ (\ref{bounded cf}).
The {\it conductor of} $\sF$ {\it relatively to} $X$ is defined to be the effective rational divisor on $X$ with support in $D$ whose multiplicity at any generic point $\xi$ of $D$ is the conductor of $\sF$ at $\xi$ (\cite{rc} 8.10).

\begin{definition}[\cite{illusie} 2.6]\label{def tame}
Let $Y$ be a $k$-scheme, $Z$ a closed subscheme of $Y$, $V=Y-Z$ the complementary open subscheme of $Z$ in $Y$ that is connected and smooth over $\spec(k)$ and $\sF$ a locally constant and constructible sheaf of $\L$-modules on $V$. For any geometric point $\ol y$ of $Y$, $Y_{(\ol y)}$ denotes the strict localization of $Y$ at $\ol y$.
Then $\sF$ is tamely ramified along $Z$ if the following equivalent conditions are satisfied:
\begin{itemize}
\item[(i)]
For each geometric point $\ol y$ of $Y$ and each geometric point $\ol x\in Y_{(\ol y)}\ti_XV$, the $p$-sylow sub-groups of the \'etale fundamental group $\pi_1(Y_{(\ol y)}\ti_YV,\ol x)$ act trivially on $\sF_{\ol x}$.
  \item[(ii)]
  For each geometric point $\ol y$ of $Y$, there exists an \'etale neighborhood $W$ of $\ol y$ and a Galois \'etale covering $T$ of $W\ti_YV$ of order prime to $p$, such that the pull-back of $\sF$ on $T$ is a constant sheaf.
\end{itemize}
Moreover, if $Y$ is smooth over $\spec(k)$ and $Z$ is a divisor with simple normal crossing on $Y$, $\sF$ is tamely ramified along $Z$ if and only if
\begin{itemize}
  \item[(iii)]
  For any geometric point $\ol\xi$ of $Z$ localized at a generic point of $Z$, the pull-back of $\sF$ on the generic point of the trait $X_{(\ol\xi)}$ is tamely ramified in the usual sense.
\end{itemize}
\end{definition}

\begin{lemma}[\cite{saito cc} 2.21]\label{tame ram}
Let $\sF$ be a locally constant and constructible sheaf of $\L$-modules on $U$. Then the following conditions are equivalent:
\begin{itemize}
  \item[(i)]
  $\sF$ is tamely ramified along $D$.
  \item[(ii)]
  The conductor of $\sF$ vanishes.
  \item[(iii)]
  The ramification of $\sF$ along $D$ is bounded by $0+$.
\end{itemize}
\end{lemma}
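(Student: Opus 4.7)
The plan is to reduce all three conditions to a collection of equivalent local conditions at each generic point $\xi$ of $D$, and then to invoke the classical fact that a finite separable extension of a complete discrete valuation field is tamely ramified in the usual sense if and only if its logarithmic conductor vanishes, equivalently, if and only if its logarithmic ramification is bounded by $0+$ (cf. \ref{bounded cf}).

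I would first handle (i) $\Leftrightarrow$ (ii). Since $X$ is smooth and $D$ has simple normal crossings, criterion (iii) of Definition \ref{def tame} says that $\sF$ is tamely ramified along $D$ iff, for every geometric point $\ol\xi$ localized at a generic point $\xi$ of $D$, the pull-back of $\sF$ to the generic point $\eta$ of the trait $X_{(\ol\xi)}$ is tamely ramified in the usual sense. By the classical fact recalled above, this is equivalent to the logarithmic conductor of any finite étale trivializing cover $\eta'/\eta$ of $\sF|_\eta$ being $0$, and by \ref{bounded conductor} this is exactly the condition $c_\xi=0$, where $c_\xi$ denotes the conductor of $\sF$ at $\xi$. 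Since the conductor of $\sF$ relatively to $X$ is the divisor $\sum_\xi c_\xi\,\overline{\{\xi\}}$ indexed by the generic points of $D$, its vanishing is equivalent to $c_\xi=0$ for every $\xi$, giving (i) $\Leftrightarrow$ (ii).

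For (ii) $\Leftrightarrow$ (iii), at any geometric point $\ol\xi$ above a generic point $\xi$ of $D$, the equivalence (i) $\Leftrightarrow$ (ii) of \ref{bounded conductor} translates the assertion ``ramification of $\sF$ at $\ol\xi$ is bounded by $0+$'' into the existence of a trivializing finite étale cover $\eta'/\eta$ of $\sF|_\eta$ with logarithmic ramification bounded by $0+$, i.e.\ into $c_\xi = 0$. Hence (iii) implies (ii) immediately, and (ii) gives the stalk condition of \ref{sheaf bound}(i) at every such $\ol\xi$. The remaining step is to propagate the isomorphism of the base change map $\alpha$ in \eqref{alpha cf} (with $R=0$) from geometric generic points of $D$ to every geometric point $\ol x\in X$.

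I expect this last propagation to be the main obstacle. Away from $D$ the map $\alpha$ is tautologically an isomorphism, so one may assume $\ol x\in D$; moreover $\alpha_{\ol x}$ is always injective by (\cite{rc} 8.2), so only surjectivity needs to be established. Here I would use that, for $R=0$, the projection $\xxr=\xxlp\ra X$ is smooth near $\wt\d(X)$ (\ref{xxr} and \cite{rc} 4.6), so that the formation of $\d^{(R)*}j^{(R)}_*(\sH)$ commutes with smooth base change in $X$. Pulling back along a trait through $\ol x$ whose generic point maps to a generic point of a component of $D$ through $\ol x$, the surjectivity of $\alpha_{\ol x}$ reduces to the stalk isomorphism at the corresponding geometric generic point of $D$, which was already obtained from (ii). This yields (ii) $\Rightarrow$ (iii) and completes the proof, which is essentially the argument of \cite{saito cc} 2.21.
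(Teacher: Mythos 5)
Your reduction of (i) $\Leftrightarrow$ (ii) and of (iii) $\Rightarrow$ (ii) is correct: for (i) $\Leftrightarrow$ (ii) you correctly invoke Definition \ref{def tame}(iii) together with the classical equivalence between tameness and vanishing of the logarithmic conductor (\ref{bounded cf}), and for (iii) $\Rightarrow$ (ii) you correctly apply \ref{bounded conductor} at the geometric generic points $\ol\xi$ of $D$. You also correctly identify that the only substantial point is (ii) $\Rightarrow$ (iii), i.e.\ propagating boundedness by $0+$ from geometric generic points of $D$ to arbitrary geometric points $\ol x$ of $D$.

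The propagation argument you sketch, however, has a genuine gap. You invoke the smoothness of the projection $\xxlp\ra X$ (valid for $R=0$) and claim this makes the formation of $\d^{(R)*}j^{(R)}_*(\sH)$ commute with ``smooth base change in $X$''; you then pull back along a trait through $\ol x$ whose generic point dominates a component of $D$ and reduce to the generic point. This does not work: smooth base change applies when the \emph{base change morphism} is smooth, not when the total space is smooth over the base, and the inclusion of a trait through $\ol x$ into $X$ is a closed immersion, not a smooth morphism. The formation of $j^{(R)}_*$ in general commutes only with smooth (or quasi-finite flat, or similar) base change, and no such hypothesis is available here. In particular, restricting the pushforward to a curve inside $D$ does not compute its stalk at $\ol x$ by the analogous construction over that curve, so surjectivity of $\alpha_{\ol x}$ cannot be deduced from the generic-point isomorphism in this way.

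To close the gap one needs an argument that exploits the explicit local structure of tame local systems. Under condition (ii), Abhyankar's lemma gives, étale-locally near any $\ol x\in D$, a Kummer covering of $U$ of degree prime to $p$ along each branch of $D$ that trivializes $\sF$; for such a covering the sheaf $\sH$ and its direct image $j^{(R)}_*(\sH)$ on $\xxlp$ can be computed explicitly, and one checks directly that $\alpha_{\ol x}$ is surjective. Alternatively, one can pass through characterization (i) of Definition \ref{def tame} (triviality of the $p$-Sylow action on the local fundamental groups), which is already a pointwise condition at every $\ol x\in X$ and hence does not require any propagation step. Either route requires an additional local structure theorem for tame sheaves that your sketch replaces by an unjustified base-change claim.
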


\subsection{}\label{isoclinic}
Let $\sF$ be a locally constant and constructible sheaf of $\L$-modules on $U$. Let $\xi$ be a generic point of $D$, $X_{(\xi)}$ the henselization of $X$ at $\xi$, $\eta_{\xi}$ the generic point of $X_{(\xi)}$, $\ol\eta_{\xi}$ a geometric generic point of $X_{(\xi)}$ and $G_{\xi}$ the Galois group of $\ol\eta_{\xi}$ over $\eta_{\xi}$. We say that $\sF$ is {\it isoclinic at} $\xi$ if the representation $\sF_{\ol \eta_{\xi}}$ of $G_{\xi}$ is isoclinic \eqref{slope center decomp cf}. We say that $\sF$ is {\it isoclinic along} $D$ if it is isoclinic at all generic points of $D$ (\cite{rc} 8.22).

\section{Clean $\ell$-adic sheaves and characteristic cycles}

\begin{definition}[\cite{rc} 3.1]\label{additive}
  Let $X$ be a $k$-scheme, $\pi:E\ra X$ a vector bundle, and $\sF$ a constructible sheaf of $\L$-modules on $E$. We say that $\sF$ is additive if for every geometric point $\ol x$ of $X$ and for every $e\in E(\ol x)$, denoting by $\tau_e$ the translation by $e$ on $E_{\ol x}=E\ti_Y\ol x$, $\tau_e^*(\sF|_{E_{\ol x}})$ is isomorphic to $\sF|_{E_{\ol x}}$.
\end{definition}

\subsection{}
Let $\sL_{\psi}$ be the Artin-Schreier sheaf of $\L$-modules of rank $1$ over the additive group scheme $\mathbb{A}^1_{\bF_p}$ over $\bF_p$, associated to the character $\psi$ (\ref{notsheaf}) (\cite{lautf} 1.1.3). If $\mu:\A^1_{\bF_p}\ti_{\bF_p}\A^1_{\bF_p}\ra \A^1_{\bF_p}$ denotes the addition, we have an isomorphism
\begin{equation*}
  \mu^*\sL_{\psi}\isora\pr_1^*\sL_{\psi}\ot\pr_2^*\sL_{\psi}.
\end{equation*}
Hence, $\sL_{\psi}$ is additive (\ref{additive}). If $f:X\ra \A^1_{\bF_p}$ is a morphism of schemes, we put $\sL_{\psi}(f)=f^*\sL_{\psi}$.

\subsection{}
Let $X$ be a $k$-scheme, $\pi:E\ra X$ a vector bundle of constant rank $d$ and $\check\pi:\check E\ra X$ its dual bundle. We denote by $\langle\,\, ,\,\,\rangle:E\ti_X\check E\ra\A^1_{\bF_p}$ the canonical pairing, by $\pr_1:E\ti_X\check E\ra E$ and $\pr_2:E\ti_X\check E\ra \check E$ the canonical projections and by
\begin{equation*}
  \fF_{\psi}:D^b_c(E,\L)\ra D^b_c(\check E,\L)
\end{equation*}
the Fourier-Deligne transform defined by (\cite{lautf} 1.2.1.1)
\begin{equation*}
  \fF_{\psi}(K)=\R\pr_{2!}(\pr_1^*K\ot\sL_{\psi}(\langle\,\,,\,\,\rangle)).
\end{equation*}

Let $\pi^{\flat}:E^{\flat}\ra X$ be the bidual vector bundle of $\pi:E\ra X$, $a:E\ra E^{\flat}$ the anti-canonical isomorphism defined by $a(x)=-\langle x,\,\,\rangle$, and $\fF^{\vee}_{\psi}$ the Fourier-Deligne transform for $\check\pi:\check E\ra X$. For every object $K$ of $D^b_c(E,\L)$, we have a canonical isomorphism (\cite{lautf} 1.2.2.1)
\begin{equation}\label{ftdual}
  \fF^{\vee}_{\psi}\circ\fF_{\psi}(K)\isora a^*(K)(-d)[-2d].
\end{equation}

Let $\pi':E'\ra X$ be a vector bundle of constant rank $d'$, $\fF'_{\psi}$ its Fourier-Deligne transform, $f:E\ra E'$ a morphism of vector bundles, and $\check f:\check E'\ra \check E$ its dual. For every object $K'$ of $D^b_c(E',\L)$, we have canonical isomorphisms (\cite{rc} 3.4.6, 3.4.7)
\begin{eqnarray}
  \R \check f_!\circ\fF'_{\psi}(K')(d')[2d']&\isora& \fF_{\psi}\circ f^*(K')(d)[2d],\label{ftprop1}\\
  \R \check f_*\circ\fF'_{\psi}(K')&\isora&\fF_{\psi}\circ \R f^!(K').\label{ftprop2}
\end{eqnarray}

\subsection{}
Let $X$ be a $k$-scheme and $K$ an object of $D^b_c(X,\L)$. The {\it support} of $K$ is the subset of points of $X$ where the stalks of the cohomology sheaves of $K$ are not all zero. It is constructible in $X$.

\begin{proposition}[\cite{rc} 3.6]
 Let $X$ be a $k$-scheme, $\pi:E\ra X$ a vector bundle of constant rank, $\check\pi:\check E\ra X$ its dual bundle, $\sF$ a constructible sheaf of $\L$-modules on $E$ and $S\subset \check E$ the support of $\fF_{\psi}(\sF)$. Then, $\sF$ is additive if and only if for every $x\in X$, the set $S\cap \check E_x$ is finite.
\end{proposition}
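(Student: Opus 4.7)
The plan is to reduce to the case of a single geometric fibre and then exploit Fourier inversion \eqref{ftdual} together with the well-known principle that Fourier-Deligne transform exchanges translation with multiplication by an additive character. First I would observe that the formation of $\fF_\psi(\sF)$ commutes with base change on $X$: this follows from the proper base change theorem \eqref{pbc} applied to the defining formula $\fF_\psi(\sF) = \R\pr_{2!}(\pr_1^*\sF \ot \sL_\psi(\langle\,,\,\rangle))$, since the Artin-Schreier sheaf is fibrewise defined. Consequently $\fF_\psi(\sF)|_{\check E_{\ol x}} \cong \fF_\psi(\sF|_{E_{\ol x}})$ for every geometric point $\ol x$ of $X$, and since additivity itself is a fibrewise condition, it suffices to treat the case where $X = \spec(\ol k)$ and $E = V$, $\check E = \check V$ are dual $\ol k$-vector spaces of dimension $d$.

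For the ``if'' direction, assuming $S$ is a finite set of closed points in $\check V$, the complex $\fF_\psi(\sF)$ decomposes as a direct sum $\bigoplus_{y \in S} K_y$ with $K_y$ supported at the single point $y$. A direct computation from the defining formula shows that the inverse Fourier transform $\fF_\psi^\vee(K_y)$ is canonically isomorphic to $M_y \ot^L \sL_\psi(\langle -, y\rangle)$, where $M_y$ denotes the stalk complex of $K_y$ at $y$. Each such sheaf is additive because $\sL_\psi$ is additive on $\A^1_{\bF_p}$. Applying the Fourier inversion formula \eqref{ftdual} identifies $\sF$, up to shift, Tate twist and the anti-canonical isomorphism $a$, with the direct sum of these additive summands, so $\sF$ itself is additive.

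For the ``only if'' direction, suppose $\sF$ is additive. Changing variables in the defining integral of $\fF_\psi$ converts the isomorphism $\tau_e^*\sF \cong \sF$ into an isomorphism $\fF_\psi(\sF) \ot \sL_\psi(\langle e, -\rangle) \cong \fF_\psi(\sF)$ in $D^b_c(\check V, \L)$ for every $e \in V(\ol k)$. Suppose for contradiction that the support of $\fF_\psi(\sF)$ contains a component of positive dimension, and let $\xi$ be its generic point. Restricting to the geometric generic stalk yields, on the finite-rank $\L$-module $M = \fF_\psi(\sF)_{\ol\xi}$, isomorphisms $M \ot \chi_e \cong M$ as representations of $G_\xi = \mathrm{Gal}(\overline{k(\xi)}/k(\xi))$, where $\chi_e$ is the character obtained by restricting $\sL_\psi(\langle e, -\rangle)$ to $\ol\xi$. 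Via the Artin-Schreier correspondence, $\chi_e$ corresponds to the class of $\langle e, \xi \rangle$ in $k(\xi)/\wp\, k(\xi)$, with $\wp(x) = x^p - x$; because some coordinate $\xi_1$ of $\xi$ is transcendental over $\ol k$, the scalar multiples $c\xi_1$ ($c \in \ol k$) represent infinitely many distinct classes, producing infinitely many distinct characters $\chi_e$ that all fix $M$ up to isomorphism.

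The main obstacle is to derive a contradiction from this last statement, that is, to prove that a finite-rank $\L$-module admits only finitely many character twists preserving its isomorphism class. I would reduce to the residue field of $\L$ and then observe that if $M \ot \chi \cong M$ then, after passing to the semisimplification, $\chi$ must permute the finite set of isomorphism classes of simple constituents while respecting multiplicities, so the set of such $\chi$ is finite. Combined with the infinite family $\{\chi_e\}$ produced above, this contradicts the assumed positive-dimensional component of the support, completing the proof.
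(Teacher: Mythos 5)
The paper cites this as \cite{rc} 3.6 without proof, so there is no internal argument to compare against; I therefore assess your proposal on its own merits. Your reduction to geometric fibres is correct, and the ``if'' direction via Fourier inversion \eqref{ftdual} is fine: a complex on $\check V$ with finite support decomposes as a direct sum of skyscrapers, and $\fF_\psi^\vee$ of a skyscraper at $y$ with stalk $M_y$ is indeed $M_y\otimes\sL_\psi(\langle -, y\rangle)$ up to shift and twist, which is additive. The conversion of $\tau_e^*\sF\cong\sF$ into $\fF_\psi(\sF)\otimes\sL_\psi(\langle e,-\rangle)\cong\fF_\psi(\sF)$ is also a correct standard computation.

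The gap is in the ``only if'' direction, at the sentence ``because some coordinate $\xi_1$ of $\xi$ is transcendental over $\ol k$, the scalar multiples $c\xi_1$ ($c\in\ol k$) represent infinitely many distinct classes''. This is true, but it is not an immediate consequence of transcendence, and you give no argument. The point is that $W:=\{c\in\ol k: c\xi_1\in\wp\,k(\xi)\}$ is an $\mathbb F_p$-subspace of $\ol k$ that could a priori have finite index: for instance $c\xi_1^p\equiv c^{1/p}\xi_1\pmod{\wp}$, so tensoring with $\ol k$-scalars does not interact trivially with the Artin--Schreier filtration, and one cannot simply invoke a degree argument in $\ol k(\xi_1)$ because $\wp\,k(\xi)\cap\ol k(\xi_1)$ may be strictly larger than $\wp\,\ol k(\xi_1)$. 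A correct justification is, for example, the following pole-order recursion: fix a divisorial valuation $v$ on $k(\xi)$, trivial on $\ol k$, with $v(\xi_1)<0$. If $W$ had finite index, choose $0\neq c_0\in W$ and write $c_0\xi_1=\wp(g_1)$, forcing $v(g_1)=v(\xi_1)/p$ (so $p\mid v(\xi_1)$); using that $a\,\wp(g_1)\equiv(a^{1/p}-a)g_1\pmod\wp$ and that $a\mapsto a^{1/p}-a$ is surjective on $\ol k$ with kernel $\mathbb F_p$, one shows $\{c: cg_1\in\wp\,k(\xi)\}$ again has finite index, and iterating produces $g_n$ with $v(g_n)=v(\xi_1)/p^n$ for all $n$, which is impossible. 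Without such an argument the contradiction is not established. A secondary remark on the last paragraph: after passing to the residue field and semisimplifying, the finiteness of $\{\chi: M\otimes\chi\cong M\}$ still requires knowing that the stabilizer of each simple constituent is finite; the cleanest reason is simply that $M\neq 0$ factors through a finite quotient $Q$ of $G_\xi$ (the stalk being a finite $\L$-module), and $M\otimes\chi\cong M$ forces $\chi$ to be trivial on $\ker(G_\xi\to Q)$, so $\chi$ also factors through $Q$.
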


\begin{definition}[\cite{rc} 3.8]\label{fourier ds}
Let $X$ be a $k$-scheme, $\pi:E\ra X$ a vector bundle of constant rank, $\check\pi:\check E\ra X$ its dual bundle, and $\sF$ an additive constructible sheaf of $\L$-modules on $E$. We call the {\it Fourier dual support} of $\sF$ the support of $\fF_{\psi}(\sF)$ in $\check E$. We say that $\sF$ is {\it non-degenerated} if the closure of its Fourier dual support does not meet the zero section of $\check E$.
\end{definition}

If we replace $\psi$ by $a\psi$ for an element $a\in \bF_p^{\ti}$, then the Fourier dual support of $\sF$ will be replaced by its inverse image by the multiplication by $a$ on $\check E$. In particular, the notion of being non-degenerated dose not depend on $\psi$.

\begin{lemma}[\cite{saito cc} 2.6]\label{ds inclu lemma}
Let $X$ be a $k$-scheme, $\pi:E\ra X$ a vector bundle of constant rank, $s:X\ra E$ the zero section of $\pi$, $\mu:E\ti_X E\ra E$ the addition and $\sF$ and $\sG$ constructible sheaves of $\L$-modules on $E$, where $\sF$ is additive. Let $e\in \G(X,s^*\sF)$ be a section and $u:\sF\bxt\sG\ra \mu^*\sG$ a map such that the composed map
\begin{equation*}
 u|_{s(X)\ti E}\circ(e\ti\id_{\sG}):  \sG\ra s^*\sF\bxt\sG\ra\sG
\end{equation*}
is the identity. Then $\sG$ is additive and the Fourier dual support of $\sG$ is a subset of that of $\sF$.
\end{lemma}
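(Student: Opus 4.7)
The plan is to apply the Fourier--Deligne transform and interpret the hypothesis on $(u,e)$ as turning $\fF_\psi(\sG)$ into a retract of $\fF_\psi(\sF)\otimes\fF_\psi(\sG)$, from which the support inclusion is immediate. Since $\sF$ is additive if and only if its Fourier dual support meets each fiber $\check E_x$ in a finite set, the additivity of $\sG$ will follow at once from the support inclusion.

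First, I would apply $\fF_\psi^{E\times_X E}$ to the map $u\colon\sF\boxtimes\sG\to\mu^*\sG$. By the K\"unneth formula, $\fF_\psi^{E\times_X E}(\sF\boxtimes\sG)\cong\fF_\psi(\sF)\boxtimes\fF_\psi(\sG)$ on $\check E\times_X\check E$. Since the addition $\mu\colon E\oplus E\to E$ is dual, as a morphism of vector bundles, to the diagonal $\Delta\colon\check E\to\check E\times_X\check E$, formula \eqref{ftprop1} gives
\begin{equation*}
\fF_\psi^{E\times_X E}(\mu^*\sG)\cong\R\Delta_!\fF_\psi(\sG)(-d)[-2d].
\end{equation*}
Pulling back along $\Delta$ (and using $\Delta^*\Delta_*=\mathrm{id}$) then converts the Fourier transform of $u$ into a map
\begin{equation*}
m\colon \fF_\psi(\sF)\otimes\fF_\psi(\sG)\to\fF_\psi(\sG)(-d)[-2d]
\end{equation*}
on $\check E$.

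Second, I would translate the hypothesis on $(e,u)$ to a section of $m$. The closed immersion $s(X)\times_X E\hookrightarrow E\times_X E$, along which $\mu$ restricts to $\mathrm{id}_E$, has as its vector-bundle dual the first projection $\check E\times_X\check E\to\check E$; and the origin-evaluation $e\in\Gamma(X,s^*\sF)$ identifies, via the inverse Fourier formula \eqref{ftdual} and the resulting isomorphism $s^*\sF\cong\R\check\pi_!\fF_\psi(\sF)(d)[2d]$, with a cohomology class pairing against $\fF_\psi(\sF)$. The assumption $u|_{s(X)\times_X E}\circ(e\boxtimes\mathrm{id}_\sG)=\mathrm{id}_\sG$ then transports, under the Fourier transform of this restriction, to the statement that there is a morphism $\sigma\colon\fF_\psi(\sG)\to\fF_\psi(\sF)\otimes\fF_\psi(\sG)$ (built from the Fourier-translate of $e$) with $m\circ\sigma=\mathrm{id}_{\fF_\psi(\sG)}$ up to the Tate twist and shift. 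The main technical obstacle lies here: carrying through the Fourier translate of the pointwise section $e$, and making the twists and shifts match so that the composite is genuinely the identity on $\fF_\psi(\sG)$.

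Once this retraction is established, the conclusion is immediate. For any geometric point $v$ of $\check E$ with $v\notin S=\mathrm{supp}(\fF_\psi(\sF))$, we have $\fF_\psi(\sF)_v=0$, whence $(\fF_\psi(\sF)\otimes\fF_\psi(\sG))_v=0$; and since $\fF_\psi(\sG)_v$ is a retract of this, it vanishes. Thus $\mathrm{supp}(\fF_\psi(\sG))\subset S$. As $\sF$ is additive, $S\cap\check E_x$ is finite for every $x\in X$, so the same holds for $\mathrm{supp}(\fF_\psi(\sG))\cap\check E_x$, proving that $\sG$ is additive and that its Fourier dual support is contained in that of $\sF$.
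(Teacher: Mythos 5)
The overall strategy is the right one and, to my knowledge, matches the spirit of Saito's own argument for [saito cc] 2.6 (the present paper does not reproduce a proof, only a citation). You correctly identify that $\mu$ is dual to the diagonal $\Delta$ on $\check E$, so \eqref{ftprop1} turns $u$ into a derived-tensor multiplication $m\colon\fF_\psi(\sF)\otimes^L\fF_\psi(\sG)\to\fF_\psi(\sG)(-d)[-2d]$, and that once one has a unit section $\sigma$ of $m$ the support inclusion $\mathrm{supp}\,\fF_\psi(\sG)\subset\mathrm{supp}\,\fF_\psi(\sF)$ is immediate (take stalks at a geometric point $v\notin\mathrm{supp}\,\fF_\psi(\sF)$ and observe that $\fF_\psi(\sG)_v$ is a retract of $\fF_\psi(\sF)_v\otimes^L\fF_\psi(\sG)_v=0$), and additivity of $\sG$ then follows from Proposition~6.4.

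However, you have not actually established the unit section, which you yourself flag as the ``main technical obstacle,'' and as written your second step does not work: the identification $s^*\sF\cong\R\check\pi_!\fF_\psi(\sF)(d)[2d]$ produces from $e$ only a class in $\h^0(X,\R\check\pi_!\fF_\psi(\sF)(d)[2d])$, i.e.\ a compactly-supported cohomology class on $\check E$, and it is not explained how such a class yields a map $\fF_\psi(\sG)\to\fF_\psi(\sF)\otimes^L\fF_\psi(\sG)$ splitting $m$. The cleaner route is to pass to the adjoint $e^\flat\colon s_*\Lambda_X\to\sF$ of $e$ (here $s$ is the closed zero section, so $s_!=s_*$) and to note, directly from Laumon's definition of $\fF_\psi$, that $\fF_\psi(s_*\Lambda_X)\cong\Lambda_{\check E}$ canonically; then $\varepsilon:=\fF_\psi(e^\flat)\colon\Lambda_{\check E}\to\fF_\psi(\sF)$ is the desired unit, and $\sigma:=\varepsilon\otimes\mathrm{id}_{\fF_\psi(\sG)}$. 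One then must still verify that $m\circ\sigma$ is the identity (up to the twist and shift), which amounts to checking that the hypothesis --- the composite $s_*\Lambda_X\boxtimes^L\sG\xra{e^\flat\boxtimes\mathrm{id}}\sF\boxtimes^L\sG\xra{u}\mu^*\sG$ is the adjunction counit $\iota_!\iota^*\mu^*\sG\to\mu^*\sG$ for $\iota\colon s(X)\ti_XE\ira E\ti_XE$ --- is preserved under the Fourier transform and becomes the corresponding counit on the $\check E\ti_X\check E$ side after restricting along $\Delta$. This is a genuine compatibility computation using \eqref{ftprop1}, \eqref{ftprop2} and the K\"unneth formula, and it has not been carried out. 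Also a small slip: the vector-bundle dual of $s(X)\ti_XE\ira E\ti_XE$ is the \emph{second} projection $\check E\ti_X\check E\to\check E$, not the first.
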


\begin{lemma}[\cite{rc} 3.10]\label{non-deg *!=0}
Let $X$ be a $k$-scheme, $\pi:E\ra X$ a vector bundle of constant rank, and $\sF$ an additive constructible sheaf of $\L$-modules on $E$. If $\sF$ is non-degenerate, $\R\pi_*\sF=\R\pi_!\sF=0$.
\end{lemma}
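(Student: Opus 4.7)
The plan is to prove $\R\pi_!\sF=0$ by a direct Fourier-theoretic computation and then to deduce $\R\pi_*\sF=0$ by Verdier duality.

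For the first vanishing, I would identify $\R\pi_!\sF$ with the pull-back of $\fF_\psi(\sF)$ along the zero section of $\check E$. Denote by $0_{\check E}:X\ra\check E$ this zero section. Since the pairing $\langle\,,\,\rangle$ vanishes on $E\ti_X 0_{\check E}(X)$, the Artin-Schreier sheaf $\sL_\psi(\langle\,,\,\rangle)$ restricts there to the constant sheaf $\L$. Applying the proper base change theorem \eqref{pbc} to the Cartesian square formed by $\pr_2:E\ti_X\check E\ra\check E$ and $0_{\check E}$, I would obtain $0_{\check E}^*\fF_\psi(\sF)\iso\R\pi_!\sF$. By the definition of non-degeneracy \ref{fourier ds}, the closure of the Fourier dual support of $\sF$ is disjoint from $0_{\check E}(X)$, so $\fF_\psi(\sF)$ vanishes on an open neighbourhood of $0_{\check E}(X)$, and the left-hand side is zero.

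For the second vanishing, I would use \eqref{adj!cor} and biduality to write $\R\pi_*\sF\iso\bD_X(\R\pi_!\bD_E\sF)$, so it suffices to apply the first step to $\bD_E\sF$. Additivity of $\bD_E\sF$ is immediate because translations $\tau_e$ are automorphisms of $E$ and Verdier duality commutes with pull-back along isomorphisms, yielding $\tau_e^*\bD_E\sF\iso\bD_E(\tau_e^*\sF)\iso\bD_E\sF$. For non-degeneracy, I would invoke the standard duality-exchange isomorphism for the Fourier-Deligne transform, which has the form $\fF_\psi(\bD_E\sF)\iso a^*\bD_{\check E}(\fF_\psi(\sF))$ up to a Tate twist and a shift; it is obtained from Poincar\'e duality for the smooth morphism $\pr_2:E\ti_X\check E\ra\check E$, together with \eqref{adj!}, the projection formula \eqref{projection formula}, and the identification $\sL_{\psi^{-1}}\iso a^*\sL_\psi$. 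Since both Verdier duality and pull-back along the isomorphism $a$ preserve supports, the Fourier dual support of $\bD_E\sF$ is the image of that of $\sF$ under a fibre-wise linear automorphism of $\check E$, hence still disjoint from the zero section.

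The main obstacle is the duality-exchange formula for $\fF_\psi$, which requires careful bookkeeping of twists, shifts, and the sign change exchanging $\psi$ and $\psi^{-1}$. Once it is established, both vanishings reduce to the same principle: the fibre of $\fF_\psi(-)$ along the zero section of $\check E$ computes $\R\pi_!$, and non-degeneracy kills it.
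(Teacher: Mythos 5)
Your argument for $\R\pi_!\sF=0$ is essentially the paper's argument unpacked: the paper combines Fourier inversion \eqref{ftdual} with \eqref{ftprop1}, applied to $f$ the zero section of $\check E$ and $K'=\fF_\psi(\sF)$, and the net effect is precisely the identity $0_{\check E}^*\fF_\psi(\sF)\iso\R\pi_!\sF$ which you prove directly via base change for $\R\pr_{2!}$. For $\R\pi_*\sF=0$, however, you take a genuinely different route. The paper applies \eqref{ftprop2} to the same $f$ and $K'$, getting $\R\pi_*\sF(-d)[-2d]\iso\fF_\psi(\R f^!\fF_\psi(\sF))$, which vanishes because $\fF_\psi(\sF)$ is zero on a neighbourhood of the zero section. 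You instead reduce to the first vanishing through Verdier duality, writing $\R\pi_*\sF\iso\bD_X(\R\pi_!\bD_E\sF)$ and then invoking the duality-exchange isomorphism $\fF_\psi\circ\bD_E\iso a^*\bD_{\check E}\circ\fF_\psi$ (up to twist and shift) to propagate non-degeneracy from $\sF$ to $\bD_E\sF$. Both routes are correct and both ultimately rest on nontrivial input about the Fourier--Deligne transform (essentially Laumon's results, of which both \eqref{ftprop2} and the duality-exchange are consequences). The paper's packaging keeps the two vanishings perfectly parallel and avoids explicit use of biduality; your version makes visible the conceptual point that one vanishing is the Verdier dual of the other, at the small cost of needing $\sF$ (and hence $\bD_E\sF$) to lie in $\dctf$ so that biduality applies, and of having to verify that $\bD_E\sF$ is still additive and non-degenerate -- points you address correctly.
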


It follows form \eqref{ftdual}, \eqref{ftprop1} and \eqref{ftprop2} by applying $f$ to the zero section of the dual bundle $\check E$ of $E$ and $K'=\fF_{\psi}(\sF)$.

\subsection{}\label{wtv jvr}
Let $X$ be a connected smooth $k$-scheme of dimension $d$, $D$ a divisor with simple normal crossing on $X$, $\{D_i\}_{i\in I}$ the irreducible components of $D$, $R$ an effective Cartier divisor of $X$ with support in $D$, $U=X-D$ and $V=X-R$ the complementary open subschemes of $D$ and $R$ in $X$ respectively, $j:U\ra X$, $j_V:V\ra X$ and $\nu:U\ra V$ the canonical injections. We denote by $\xxlp$ (resp. $\vvlp$) the framed self-product of $X$ along $D$ (resp. of $V$ along $D\cap V$) (\ref{xstarx}), by $\wt\d:X\ra\xxlp$ the canonical lifting of the diagonal $\d:X\ra\xx$ (\ref{xstarx}) and by $\xxr$ the dilatation of $\xxlp$ along $\wt\d$ of thickening $R$, and we take the notation of (\ref{xxr}). Moreover, we denote by
\begin{equation*}
  \wt\nu:\uu\ra\vvlp\ \ \ {\rm and}\ \ \ j^{(R)}_V:\vvlp\ra\xxr
\end{equation*}
the canonical injections, by $E^{(R)}$ the vector bundle $\xxr\ti_XR$ over $R$ \eqref{vectorbundle xxr}, and by $\check E^{(R)}$ its dual bundle.

Let $\sF$ be a locally constant and constructible sheaf of free $\L$-modules on $U$. We put
\begin{equation*}
\sH=\shom(\pr_{2}^*\sF,\pr_{1}^*\sF)
\end{equation*}
on $\uu$.

\begin{proposition}[\cite{rc} 8.15, 8.17]\label{proof additive rc}
We keep the assumptions and notation of {\rm\ref{wtv jvr}}, moreover, we assume that the ramification of $\sF$ along $D$ is bounded by $R+$ {\rm(\ref{sheaf bound})}. Then,
\begin{itemize}
    \item[(i)]
   $j^{(R)}_*\sH|_{E^{(R)}}$ is additive. Let $S^0_R(\sF)\subset \check E^{(R)}$ be its Fourier dual support.
   \item[(ii)]
   $S^0_R(\sF)$ is the underlying space of a closed subscheme of $\check E^{(R)}$ which is finite over $R$.
  \end{itemize}
\end{proposition}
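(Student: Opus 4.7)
My plan is to deduce (i) from Lemma~\ref{ds inclu lemma} applied with $\sF=\sG=j^{(R)}_*\sH|_{E^{(R)}}$, by exhibiting on the vector bundle $E^{(R)}\to R$ a composition-type morphism together with a distinguished section along the zero section satisfying the hypotheses of that lemma. Part (ii) will then follow from the additivity by combining Proposition~\ref{non-deg *!=0} applied fibrewise over $R$ with a separate properness argument, which I expect to be the main technical obstacle.

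The distinguished section comes directly from the ramification bound. Since the ramification of $\sF$ along $D$ is bounded by $R+$, the base change map $\alpha\colon\d^{(R)*}j^{(R)}_*\sH\to j_*\send(\sF)$ of \eqref{alpha cf} is an isomorphism, and the section $\id_\sF$ lies in its image; hence $\id_\sF$ lifts canonically to a global section $e$ of $\d^{(R)*}j^{(R)}_*\sH$ on $X$. By adjunction $e$ is a section of $j^{(R)}_*\sH$ along $\d^{(R)}(X)\subset \xxr$, and its restriction to $R$ gives a section of $j^{(R)}_*\sH|_{E^{(R)}}$ along the zero section $R\ira E^{(R)}$.

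To produce the composition morphism I imitate the composition of endomorphisms of $\sF$. On $U\ti_kU\ti_kU$ the map $\pr_{12}^*\sH\ot\pr_{23}^*\sH\to\pr_{13}^*\sH$ is induced from $\send(\sF)\ot\send(\sF)\to\send(\sF)$. The threefold analogue of the framed self-product admits a natural morphism to $\xxlp$ via the outer first-third projection, and after thickening along the relevant partial diagonals by $R$ this lifts to a morphism $\xxr\ti_X\xxr\to\xxr$ whose restriction over $R$ is the addition $\mu\colon E^{(R)}\ti_R E^{(R)}\to E^{(R)}$ coming from the vector bundle structure \eqref{vectorbundle xxr}. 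Pushing forward the composition map along the various open immersions, I obtain on $E^{(R)}\ti_R E^{(R)}$ a morphism
\[
  u\colon \bigl(j^{(R)}_*\sH|_{E^{(R)}}\bigr)\bxt_R\bigl(j^{(R)}_*\sH|_{E^{(R)}}\bigr)\longrightarrow \mu^*\bigl(j^{(R)}_*\sH|_{E^{(R)}}\bigr).
\]
Because composing with $\id_\sF$ on either side is the identity, the restriction of $u$ along $e$ on the first factor becomes the identity on the second, so Lemma~\ref{ds inclu lemma} applies and yields (i).

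For (ii), given the additivity, Proposition~\ref{non-deg *!=0} applied to each fibre of $\check E^{(R)}\to R$ shows that $S^0_R(\sF)\cap\check E^{(R)}_x$ is a finite set for every $x\in R$, giving quasi-finiteness. The serious point, which I take to be the main obstacle, is to promote quasi-finiteness to finiteness. My plan is to compactify $\check E^{(R)}\ira \mathbf{P}(\check E^{(R)}\oplus\sO_R)$ and to show that the extension of the Fourier transform has vanishing stalks along the divisor at infinity. This vanishing will be checked by restricting to generic lines in each fibre, applying Proposition~\ref{non-deg *!=0} to the restricted additive sheaves, and exploiting the sharpness of the bound $R+$ (rather than $R'+$ for some $R'\ddy R$) to exclude any direction at infinity; the constructibility of the Fourier transform then upgrades the resulting bound on the support to a genuine closed subscheme of $\check E^{(R)}$ finite over $R$.
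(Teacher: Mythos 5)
Your plan for (i) is circular. Lemma~\ref{ds inclu lemma} takes as \emph{hypothesis} that the sheaf $\sF$ in the pair $(\sF,\sG)$ is additive, and its conclusion is that $\sG$ is then additive with Fourier dual support contained in that of $\sF$. Taking $\sF=\sG=j^{(R)}_*\sH|_{E^{(R)}}$ therefore presupposes exactly what you are trying to prove. The composition morphism $u$ and the section $e$ you build are the same objects the paper uses in the proof of Proposition~\ref{ds inclusion}, but there they enter Lemma~\ref{ds inclu lemma} with $\sF=j^{(R)}_*(\sH)|_{E^{(R)}}$ already known to be additive (from the present proposition, i.e.\ from [rc]~8.15) and with $\sG=\R^qj^{(R)}_{V*}(\wt\nu_*(\sH))|_{E^{(R)}}$ the sheaf being tested; the additivity of $\sF$ is an input, not an output. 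Establishing it from scratch is precisely the content of [rc]~8.15 and needs a different mechanism, e.g.\ a direct description of the stalks of $j^{(R)}_*\sH|_{E^{(R)}}$ at geometric points over the generic points of $R$ via the slope and central character decomposition, not an invocation of Lemma~\ref{ds inclu lemma}.

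Part (ii) has further gaps. Quasi-finiteness of the Fourier dual support follows from additivity by the proposition from [rc]~3.6 stated just before Definition~\ref{fourier ds}, not by Lemma~\ref{non-deg *!=0}: the latter gives vanishing of pushforwards for a non-degenerate sheaf and does not bear on fibrewise finiteness, and non-degeneracy is not assumed here. More seriously, your promotion from quasi-finite to finite rests on ``the sharpness of the bound $R+$,'' which is not a hypothesis: the statement only assumes the ramification is bounded by $R+$, not that $R$ equals the conductor, and the Fourier dual support may well be much smaller than the bound suggests (even contained in the zero section). The plan of restricting to generic lines and excluding directions at infinity by sharpness therefore has no foundation, and no alternative closure-at-infinity argument is supplied. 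Note, finally, that the paper itself does not prove this proposition; it cites [rc]~8.15 and 8.17 directly, so the substantive work lies entirely in the cited reference.
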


\begin{proposition}\label{ds inclusion}
We keep the assumptions and notation of {\rm\ref{wtv jvr}}, moreover, we assume that the ramification of $\sF$ along $D$ is bounded by $R+$. Then, for any integer $q\ddy 0$, $\R^q j^{(R)}_{V*}(\wt\nu_*(\sH))|_{E^{(R)}}$ is additive. Let $S^q_R(\sF)\subset \check E^{(R)}$ be the Fourier dual support of $\R^q j^{(R)}_{V*}(\wt\nu_*(\sH))|_{E^{(R)}}$, we have $S^q_R(\sF)\subseteq S^0_R(\sF)$.
\end{proposition}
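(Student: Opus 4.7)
The plan is to apply Lemma \ref{ds inclu lemma} with the additive sheaf taken to be $\sF_0:=j^{(R)}_*\sH|_{E^{(R)}}$, whose additivity is Proposition \ref{proof additive rc}~(i), and with the sheaf to be shown additive taken to be $\sG_q:=\R^q j^{(R)}_{V*}(\wt\nu_*\sH)|_{E^{(R)}}$ for each $q\geqslant 0$. This would yield both the additivity of $\sG_q$ and the inclusion $S^q_R(\sF)\subseteq S^0_R(\sF)$ in one stroke. Concretely, I need to construct a section $e\in \Gamma(R, s^*\sF_0)$, where $s:R\to E^{(R)}$ is the zero section, together with a ``module-type'' map
\begin{equation*}
u_q:\sF_0\boxtimes\sG_q\to\mu^*\sG_q
\end{equation*}
on $E^{(R)}\ti_R E^{(R)}$, where $\mu$ denotes fiberwise addition, such that the composition of $u_q|_{s(R)\ti_R E^{(R)}}$ with $e\ti\id_{\sG_q}$ is the identity of $\sG_q$.

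Both $e$ and $u_q$ should originate from the composition of Hom sheaves. On the triple product $U\ti_k U\ti_k U$ there is a natural composition $\pr_{23}^*\sH\ot\pr_{12}^*\sH\to\pr_{13}^*\sH$, with the identity endomorphism $\id_\sF$ providing a canonical unit along the small diagonal. To transfer this to the relevant dilated and framed geometry, I would construct a triple analog of $\xxr$, namely a framing of $X\ti_k X\ti_k X$ along $D$ followed by a dilation along the small diagonal with thickening $R$ on each pair of factors. Its restriction to $R$ should be canonically $E^{(R)}\ti_R E^{(R)}$, with the outer projection to $\xxr$ identified with the addition $\mu$ and the two partial projections to $\xxr$ identified with the two coordinate projections. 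Using the universal property of dilations together with the assumption that the ramification of $\sF$ along $D$ is bounded by $R+$ (cf.\ \ref{sheaf bound}), the composition of Hom sheaves should extend across this triple scheme. Pushing forward by $\R^q j^{(R)}_{V*}\wt\nu_*$ along the middle factor and by $j^{(R)}_*$ along the outer one, and invoking base change, one obtains the desired map $u_q$.

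The main technical difficulty will be the construction of this triple framed dilatation and the verification of its linear structure along the fiber over $R$, together with the compatibility of the higher direct images $\R^q j^{(R)}_{V*}$ (for $q>0$) with the composition, which requires a K\"unneth argument along the middle factor and a Leray spectral sequence for the factorization $j^{(R)}=j^{(R)}_V\circ\wt\nu$. Once $(e,u_q)$ is in place and the normalization $u_q|_{s(R)\ti_R E^{(R)}}\circ(e\ti\id)=\id_{\sG_q}$ is checked -- essentially because $\id_\sF\circ\varphi=\varphi$ -- Lemma \ref{ds inclu lemma} yields both conclusions of the proposition. This strategy parallels the proof of Proposition \ref{proof additive rc} in \cite{rc} 8.15 and 8.17, the new ingredient being the control of the composition through the intermediate pushforward $\wt\nu_*$.
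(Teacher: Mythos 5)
Your proposal takes essentially the same approach as the paper: apply Lemma \ref{ds inclu lemma} with $\sF_0 = j^{(R)}_*\sH|_{E^{(R)}}$ and $\sG_q=\R^q j^{(R)}_{V*}(\wt\nu_*\sH)|_{E^{(R)}}$, building the module map from the composition pairing $\sH\bxt\sH\to\pr_{13}^*\sH$ on a triple-product geometry and normalizing it by the section attached to $\id_{\sF}$. The ``triple framed dilatation'' you describe is realized in the paper as the fiber product $\xxr\ti_X\xxr$ equipped with the smooth map $\mu:\xxr\ti_X\xxr\to\xxr$ lifting $\pr_{13}$ (cited from \cite{rc} 5.34--5.35 and \cite{saito cc} 2.24, with $\mu$ restricting to the addition of $E^{(R)}$ over $R$), and the compatibility with the higher direct images $\R^q j^{(R)}_{V*}$ is obtained by smooth base change along $\mu$ rather than a Leray spectral sequence.
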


\begin{proof}
We focus on the situation $q\ddy 1$ since the case $q=0$ is due to \ref{proof additive rc}.
For a scheme $Y$ over $\xx$, we denote by $f_1,f_2:Y\ra X$ the maps induced by the projections $\pr_1,\pr_2:\xx\ra X$, respectively. We denote the fiber product $Y\ti_{f_2,X,f_1}Y$ simply by $Y\ti_XY$.

By (\cite{rc} 5.34, \cite{saito cc} 2.24), there exists a morphism $\lambda:(\xxlp)\ti_X(\xxlp)\ra\xxlp$ that lifts the composed map
$(\xx)\ti_X(\xx)\isora X\ti_kX\ti_kX\xra{\pr_{13}}\xx$, and a smooth morphism $\mu:\xxr\ti_X\xxr\ra\xxr$ that makes the diagram
\begin{equation*}
\xymatrix{\relax
\xxr\ti_X\xxr\ar[r]^-{\mu}\ar[d]&\xxr\ar[d]\\
(\xxlp)\ti_X(\xxlp)\ar[r]^-{\lambda}&\xxlp}
\end{equation*}
commutative, where the vertical arrows are the canonical projections. Moreover, the pull-back of $\mu$ by the canonical injection $E^{(R)}\ra\xxr$
\begin{equation*}
  \mu^{(R)}:E^{(R)}\ti_RE^{(R)}\ra E^{(R)}
\end{equation*}
is the addition of the bundle $E^{(R)}$ (\cite{rc} 5.35). Hence, we have a canonical commutative diagram with Cartesian squares
\begin{equation*}
  \xymatrix{\relax
  (\uu)\ti_X(\uu)=U\ti_kU\ti_kU\ar@{}|{\Box}[rd]\ar[d]_{\ol\nu}\ar[r]^-{\pr_{13}}
  &\uu\ar@/^9mm/[dd]^{j^{(R)}}\ar[d]^{\wt\nu}\\
  (\vvlp)\ti_X(\vvlp)\ar@{}|{\Box}[rd]\ar[r]^-{\mu_V}\ar[d]_{\ol j^{(R)}_V}&\vvlp\ar[d]^{j^{(R)}_V}\\
  \xxr\ti_X\xxr\ar[r]^-{\mu}&\xxr}
\end{equation*}
where $\ol\nu$ and $\ol j^{(R)}_V$ are canonical injections. By adjunction, we have canonical maps
\begin{eqnarray}
  \wt\nu_*(\sH)\bxt^L\wt\nu_*(\sH)&\ra& \ol\nu_*(\sH\bxt\sH), \label{twice kunneth1}\\
  \R j^{(R)}_{V*}(\wt\nu_*(\sH))\bxt^L\R j^{(R)}_{V*}(\wt\nu_*(\sH))&\ra&
  \R \ol j^{(R)}_{V*}(\wt\nu_*(\sH)\bxt^L\wt\nu_*(\sH)).\label{twice kunneth2}
\end{eqnarray}
On $(\uu)\ti_X(\uu)=U\ti_kU\ti_kU$, we have

\begin{equation*}
  \sH\bxt\sH=\shom(\pr_{2}^*\sF,\pr_{1}^*\sF)\ot\shom(\pr_{3}^*\sF,\pr_{2}^*\sF),
\end{equation*}
that gives a map
\begin{equation}\label{hh to p13h}
  \sH\bxt\sH\ra\shom(\pr^*_{3}\sF,\pr^*_{1}\sF)=\pr^*_{13}\sH.
\end{equation}
Since $\mu$ is smooth, by the smooth base change theorem, we have an isomorphism
\begin{equation}\label{twice sbc}
 \mu^*(\R j^{(R)}_{V*}(\wt\nu_*(\sH)))\isora \R\ol j^{(R)}_{V*}(\ol\nu_*(\pr^*_{13}(\sH))).
\end{equation}
The maps \eqref{twice kunneth1}, \eqref{twice kunneth2}, \eqref{hh to p13h} and the inverse of \eqref{twice sbc} induce a map
\begin{equation}\label{pairing rj nu h}
  \R j^{(R)}_{V*}(\wt\nu_*(\sH))\bxt^L\R j^{(R)}_{V*}(\wt\nu_*(\sH))\ra \mu^*(\R j^{(R)}_{V*}(\wt\nu_*(\sH))).
\end{equation}
Consider the following commutative diagram with Cartesian squares
\begin{equation*}
\xymatrix{\relax
U\ti_U(\uu)\ar[r]^-{\d_U\ti\id}\ar[d]_{\wt\nu}\ar@{}|{\Box}[rd]&(\uu)\ti_X(\uu)\ar[d]^{\ol\nu}\\
V\ti_V(\vvlp)\ar[r]^-{\wt\d_V\ti \id}\ar[d]_{j^{(R)}_V}\ar@{}|{\Box}[rd]&(\vvlp)\ti_X(\vvlp)\ar[d]^{\ol j^{(R)}_V}\\
X\ti_X\xxr\ar[r]^-{\d^{(R)}\ti\id}&\xxr\ti_X\xxr}
\end{equation*}
Notice that $ \mu\circ(\d^{(R)}\ti\id)=\id$ (\cite{rc} 5.35). Pulling back \eqref{pairing rj nu h} by $\d^{(R)}\ti \id$, we obtain a commutative diagram
\begin{equation*}
  \xymatrix{\relax
  (\d^{(R)}\ti\id)^*(j^{(R)}_*(\sH)\bxt\R^q j^{(R)}_{V*}(\wt\nu_*(\sH)))\ar[r]\ar[d]_{\theta}&(\d^{(R)}\ti\id)^*\mu^*(\R^q j^{(R)}_{V*}(\wt\nu_*(\sH)))\ar@{=}[d]\\
  j_*\d^*_U(\sH)\bxt\R^q j^{(R)}_V(\wt\nu_*(\sH))\ar[r]^-{\vartheta}&\R^q j^{(R)}_V(\wt\nu_*(\sH))}
\end{equation*}
where $\theta$ is an isomorphism induced by the base change isomorphism \eqref{alpha cf}
\begin{equation*}
  \d^{(R)*}j^{(R)}_*(\sH)\isora j_*\d^*_U(\sH).
\end{equation*}
On $U\ti_U(\uu)=\uu$, we have
\begin{equation*}
  \d^*_U(\sH)\bxt\sH=\shom(\pr_{2}^*\sF,\pr_{2}^*\sF)\ot \shom(\pr_{2}^*\sF,\pr_{1}^*\sF),
\end{equation*}
which induces a map
\begin{equation}\label{dh h to h}
 \d^*_U(\sH)\bxt\sH\ra  \shom(\pr_{2}^*\sF,\pr_{1}^*\sF)=\sH.
\end{equation}
The morphism $\vartheta$ is the following composed map
\begin{equation*}
  j_*\d^*_U(\sH)\bxt\R^q j^{(R)}_V(\wt\nu_*(\sH))\ra \R^q j^{(R)}_{V*}\wt\nu_*(\d^*_U(\sH)\bxt\sH)\ra \R^q j^{(R)}_V(\wt\nu_*(\sH)),
\end{equation*}
where the second arrow is induced by \eqref{dh h to h}. The map
\begin{equation}\label{map e ass id f}
\e:\L\ra j_*\d^*_U(\sH)
\end{equation}
 associated to the element $\id_{\sF}\in\G(X,j_*\d_U^*(\sH))=\End(\sF)$ induces the identity
 \begin{equation*}
   \sH\isora\L\bxt\sH\xra{\e|_U\ti\id}\d_U^*(\sH)\bxt\sH\xra{\eqref{dh h to h}}\sH.
 \end{equation*}
Hence $\e$ and $\vartheta$ induce the identity of $\R^q j^{(R)}_V(\wt\nu_*(\sH))$.
Restrict \eqref{pairing rj nu h} to $E^{(R)}\ti_RE^{(R)}$, we obtain a map
\begin{equation}\label{pairing res Er}
  (j^{(R)}_*(\sH)|_{E^{(R)}})\bxt(\R^q j^{(R)}_{V*}(\wt\nu_*(\sH))|_{E^{(R)}})\ra \mu^{(R)*}(\R^q j^{(R)}_{V*}(\wt\nu_*(\sH))|_{E^{(R)}}).
\end{equation}
Notice that the zero section $s^{(R)}:R\ra E^{(R)}$ is just the pull-back of $\d^{(R)}:X\ra\xxr$ by $E^{(R)}\ra\xxr$. After restricting \eqref{pairing res Er} to $s^{(R)}(R)\ti_RE^{(R)}$, the map $\e|_R$ \eqref{map e ass id f} induces the identity of $\R^q j^{(R)}_{V*}(\wt\nu_*(\sH))|_{E^{(R)}}$.
Hence, the proposition follows from (\ref{ds inclu lemma}) (applied with $\sF=j^{(R)}_*(\sH)|_{E^{(R)}}$ and $\sG=\R^q j^{(R)}_{V*}(\wt\nu_*(\sH))|_{E^{(R)}}$).
\end{proof}

\begin{definition}[\cite{rc} 8.23]\label{clean isocline sheaf}
We keep the assumptions and notation of {\rm\ref{wtv jvr}}, moreover, we assume that the conductor of $\sF$ relatively to $X$ is the effective divisor $R$ (\ref{bounded conductor}) and that $\sF$ is isoclinic along $D$ (\ref{isoclinic}). We say that $\sF$ is {\it clean} along $D$ if the following conditions are satisfied:
\begin{itemize}
  \item[(i)]
  the ramification of $\sF$ along $D$ is bounded by $R+$;
  \item[(ii)]
  the additive sheaf $j^{(R)}_*\sH|_{E^{(R)}}$ on $E^{(R)}$ is non-degenerated (\ref{fourier ds}).
\end{itemize}
\end{definition}

\begin{proposition}\label{saito cc3.4}
We keep the assumptions and notation of {\rm\ref{wtv jvr}}, moreover, we assume that the conductor of $\sF$ relatively to $X$ is the effective divisor $R$ and that $\sF$ is isoclinic and clean along $D$ {\rm(\ref{clean isocline sheaf})}. Then, we have
  \begin{equation*}
    \R\G_{E^{(R)}}(\xxr,j^{(R)}_*(\sH)(d))=0.
  \end{equation*}
\end{proposition}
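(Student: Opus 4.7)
The plan is to reduce the vanishing to a push-forward statement along the vector bundle projection $\pi_R:E^{(R)}\to R$, and then invoke the additivity and non-degeneracy inputs provided by Propositions \ref{proof additive rc} and \ref{ds inclusion}, together with the vanishing Lemma \ref{non-deg *!=0}. Write $\sG=j^{(R)}_*\sH(d)$, denote by $i:E^{(R)}\to\xxr$ the closed immersion, and by $\pi_X:\xxr\to X$ the canonical projection, which is smooth of relative dimension $d$. The Cartesian square relating $(E^{(R)},\xxr)$ and $(R,X)$ together with the base change isomorphism \eqref{low*up!} yield
\begin{equation*}
\R\G_{E^{(R)}}(\xxr,\sG)\iso\R\G(R,\R\pi_{R*}\R i^!\sG),
\end{equation*}
so the statement reduces to proving $\R\pi_{R*}\R i^!\sG=0$.

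Let $\tau=j^{(R)}_V:\vvlp\to\xxr$ be the open complement of $i$. Since $j^{(R)}=\tau\circ\wt\nu$ and $\tau^*\tau_*=\id$ for an open immersion, we have $\tau^*\sG=\wt\nu_*\sH(d)$. Applying $i^*$ to the localization triangle $i_*\R i^!\sG\to\sG\to\R\tau_*\tau^*\sG\to$ produces a distinguished triangle on $E^{(R)}$,
\begin{equation*}
\R i^!\sG\to i^*\sG\to i^*\R j^{(R)}_{V*}(\wt\nu_*\sH)(d)\to,
\end{equation*}
so it suffices to show that both $\R\pi_{R*}(i^*\sG)$ and $\R\pi_{R*}(i^*\R j^{(R)}_{V*}(\wt\nu_*\sH)(d))$ vanish. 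The first, $\R\pi_{R*}((j^{(R)}_*\sH)|_{E^{(R)}}(d))$, vanishes because $(j^{(R)}_*\sH)|_{E^{(R)}}$ is additive by Proposition \ref{proof additive rc}(i) and non-degenerate by the cleanliness condition \ref{clean isocline sheaf}(ii), so Lemma \ref{non-deg *!=0} applies (the Tate twist is irrelevant). For the second, Proposition \ref{ds inclusion} shows that for every $q\ddy 0$ the cohomology sheaf $\R^q j^{(R)}_{V*}(\wt\nu_*\sH)|_{E^{(R)}}$ is additive with Fourier dual support $S^q_R(\sF)\subseteq S^0_R(\sF)$; cleanliness makes $S^0_R(\sF)$ avoid the zero section of $\check E^{(R)}$, so each such sheaf is non-degenerate. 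Lemma \ref{non-deg *!=0} applied to each $\R^q$, combined with the spectral sequence $E_2^{p,q}=R^p\pi_{R*}(\R^q j^{(R)}_{V*}(\wt\nu_*\sH)|_{E^{(R)}}(d))\Rightarrow R^{p+q}\pi_{R*}(i^*\R j^{(R)}_{V*}(\wt\nu_*\sH)(d))$, then yields the desired vanishing.

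The main technical ingredient is Proposition \ref{ds inclusion}: controlling the higher direct images $\R^q j^{(R)}_{V*}(\wt\nu_*\sH)$ for $q\ddy 1$ is exactly what enables the passage from the easy statement about $i^*\sG$, where additivity of a single sheaf suffices, to the one about $\R i^!\sG$, namely cohomology with support. The remainder of the argument is a formal manipulation of base change and localization triangles in the six-functor formalism.
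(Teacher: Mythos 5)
Your proof is correct and follows essentially the same route as the paper: both arguments reduce to showing $\R\pi_{R*}\R i^!\sG=0$ (where $\pi_R:E^{(R)}\to R$ is the bundle projection), use the localization triangle for the closed immersion $i:E^{(R)}\to\xxr$ with open complement $\vvlp$, invoke Proposition \ref{ds inclusion} for additivity and non-degeneracy of the restricted higher direct images, and finish with Lemma \ref{non-deg *!=0}. The paper phrases the localization step as an explicit computation of $\R^qi^!(j^{(R)}_*\sH)$ (zero for $q\leqslant 1$, $i^*\R^{q-1}j^{(R)}_{V*}(\wt\nu_*\sH)$ for $q\geqslant 2$), whereas you apply $\R\pi_{R*}$ directly to the three vertices of the triangle and kill the two outer ones by a spectral-sequence argument; this is a purely presentational difference. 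One minor remark: the identification $\R\G_{E^{(R)}}(\xxr,\sG)\cong\R\G(R,\R\pi_{R*}\R i^!\sG)$ is simply the factorization of $\R\G(E^{(R)},-)$ through $R$, no base change is needed there, so the reference to \eqref{low*up!} in that step is superfluous (though harmless).
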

\begin{proof}
We denote by $i:E^{(R)}\ra\xxr$ the canonical injection and $\pi:E^{(R)}\ra R$ the canonical projection. Notice that (\cite{rc} 5.26)
\begin{equation*}
  \vvlp=\xxr\ti_XV=\xxr-E^{(R)},
\end{equation*}
then
\begin{equation*}
  \R^q i^! (j^{(R)}_*\sH)=\bigg{\{}\begin{array}{ll}
    0&{\rm when}\ \ \ q\xdy 1;\\
    i^*\R^{q-1}j^{(R)}_{V*}(\wt\nu_*(\sH))& {\rm when}\ \ \ q\ddy 2.
  \end{array}
\end{equation*}
Since $\sF$ is clean along $D$, for any integer $q$, the sheaf $i^*\R^{q-1}j^{(R)}_{V*}(\nu_*(\sH))$ on $E^{(R)}$ is additive and non-degenerated (\ref{ds inclusion}). Hence, for any integer $q$, $\R\pi_*\R^q i^! (j^{(R)}_*(\sH))=0$ (\ref{non-deg *!=0}).
Hence,
\begin{equation*}
  \R\G_{E^{(R)}}(\xxr,j^{(R)}_*(\sH)(d))=\R\G(R,\R\pi_*\R i^! (j^{(R)}_*(\sH))(d))=0.
\end{equation*}
\end{proof}

\begin{remark}
Proposition \ref{saito cc3.4} is used in the proof of (\cite{saito cc} 3.4). However, the proof of loc. cit. relies on (\cite{saito cc} 2.25) which is not enough. We reinforce it in \ref{ds inclusion}.
\end{remark}

\subsection{}\label{charcycle}
We keep the assumptions and notation of {\rm\ref{wtv jvr}} and we denote by
\begin{equation*}
\mathrm{T}^*X(\log D)=\mathbf{V}(\O^1_{X/k}(\log D)^{\vee}),
\end{equation*}
the logarithmic cotangent bundle of $X$, by $\sigma:X\ra \mathrm{T}^*X(\log D)$ the zero section, for $i\in I$, by $\xi_i$ the generic point of $D_i$, by $F_i$ the residue field of $\sO_{X,\xi_i}$, by $S_i=\spec(\sO_{K_i})$ the henselization of $X$ at $\xi_i$, by $\eta_i=\spec(K_i)$ the generic point of $S_i$, by $\ol K_i$ a separable closure of $K_i$ and by $G_i$ the Galois group $\mathrm{Gal}(\ol K_i/K_i)$.

We assume moreover that the conductor of $\sF$ is $R$, and that $\sF$ is isoclinic and clean along $D$. We denote by $M_i$ the $\L[G_i]$-module corresponding to $\sF|_{\eta_i}$. Since $\sF$ is isoclinic along $D$, $M_i$ has just one slope $r_i$. We put $I_{\rw}=\{i\in I; r_i>0\}$ and $S=\sum_{i\in I_{\rw}}D_i$.
For $i\in I_{\rw}$, let
\begin{equation*}
  M_i=\oplus_{\chi}M_{i,\chi}
\end{equation*}
be the central character decomposition of $M_i$ (\ref{slope center decomp cf}). Note that $M_{i,\chi}$ is a free $\L$-module of finite type for all $\chi$. By enlarging $\L$, we may assume that for all central characters $\chi$ of $M_i$, we have $\L_{\chi}=\L$. Since $\mathrm{Gr}^{r_i}_{\log}G_i$ is abelian and killed by $p$ (\cite{saito cc} 1.24), each $\chi$ factors uniquely as
$\mathrm{Gr}^{r_i}_{\log}G_i\ra\mathbb{F}_p\xra{\psi}\L^{\ti}$, where $\psi$ is the non-trivial additive character fixed in \ref{fixnotation}. We denote also by $\chi$ the induced character and by
\begin{equation*}
  \mathrm{rsw}(\chi):\mathfrak{m}^{r_i}_{\ol K_i}/\mathfrak{m}^{r_i+}_{\ol K_i}\ra\O^1_{F_i}(\log)\ot_{F_i}\ol F_i
\end{equation*}
its refined Swan conductor \eqref{rsw llcc} (where the notation are defined as in \ref{rsw intro cf}). Let $F_{\chi}$ be the field of definition of $\mathrm{rsw}(\chi)$, which is a finite extension of $F_i$ contained in $\ol F_i$. The refined Swan conductor $\mathrm{rsw}(\chi)$ defines a line $L_{\chi}$ in $\mathrm{T}^*X(\log D)\ot_X\spec(F_{\chi})$. Let $\ol L_{\chi}$ be the closure of the image of $L_{\chi}$ in $\mathrm{T}^*X(\log D)$. For $i\in I_{\rw}$, we put
\begin{equation}\label{CCi}
  CC_i(\sF)=\sum_{\chi}\frac{r_i\cdot\rk_{\L}(M_{i,\chi})}{[F_{\chi}:F_i]}[\ol L_{\chi}],
\end{equation}
which is a $d$-cycle on $\mathrm{T}^*X(\log D)\ti_XD_i$. We define a $d$-cycle $CC^*(\sF)$ on $\mathrm{T}^*X(\log D)\ti_XS$ by
\begin{equation}\label{CC0}
 CC^*(\sF)=\sum_{i\in I_{\rw}}CC_i(\sF).
\end{equation}
We define the {\em characteristic cycle} of $\sF$ and denote by $CC(\sF)$, the $d$-cycle on $\mathrm{T}^*X(\log D)$ defined by (\cite{saito cc} 3.6)
\begin{equation*}
  CC(\sF)=(-1)^d\left(\rk_{\L}(\sF)[\sigma(X)]+CC^*(\sF)\right).
\end{equation*}

\section{Tsushima's refined characteristic class}

\subsection{}\label{notllcc}
In this section, $X$ denotes a connected smooth $k$-scheme of dimension $d$, $D$ a divisor with simple normal crossing on $X$ and $\{D_i\}_{i\in I}$ the irreducible components of $D$. We assume that $I=I_{\rmt}\coprod I_{\rw}$, and we put $S=\bigcup_{i\in I_{\rw}}D_i$, $T=\bigcup_{i\in I_{\rmt}}D_i$, $U=X-D$ and $V=X-S$. We denote by $j:U\ra X$, $j_V:V\ra X$ and $\nu:U\ra V$ the canonical injections.

We denote by $\xxlb$ the blow-up of $\xx$ along $\{D_i\ti_kD_i\}_{i\in I}$, by $\xxd$ the blow-up of $\xx$ along $\{D_i\ti_kD_i\}_{i\in I_{\rmt}}$, by $X\rtimes_kX$ the left-framed self-product of $X$ along $D$ and by $\xxlp$ the framed self-product of $X$ along $D$ (\ref{xstarx}).
For any open subschemes $Y$ and $Z$ of $X$, we put
\begin{eqnarray*}
  \yzlb&=&(\yz)\ti_{(\xx)}\xxlb,\\
  \yzd&=&(\yz)\ti_{(\xx)}\xxd,\\
  \yzlp&=&(\yz)\ti_{(\xx)}(\xxlp).
\end{eqnarray*}
Notice that $\vvlb=(V\ti_kV)^{\dagger}$ and $\vvlp=\prod_{i\in I_{\rmt}}((\xxlp)_i\ti_{(\xx)}(\vv))$. We have the
following commutative diagram with Cartesian squares
\begin{equation}\label{diag5ti2}
  \xymatrix{\relax
  \uu\ar@{=}[d]\ar@{}[rd]|{\Box}\ar@{=}[r]&\uu\ar@{}[rd]|{\Box}\ar[r]^-{\wt\nu}\ar[d]^-{\nu_2}&\vvlp\ar[d]^{\varphi_2}\ar@{=}[r]\ar@{}[rdd]|{\Box}&\vvlp\ar[r]\ar[dd] \ar@{}[rdd]|{\Box} &\xxlp\ar[dd]^{\varphi} \\
  \uu\ar@{=}[d]\ar@{}[rd]|{\Box}\ar[r]&\uv\ar@{}[rd]|{\Box}\ar@{=}[d]\ar[r]^-{\nu^{\rtimes}_1}&V\rtimes_kV\ar[d]^{\varphi_1}& & \\
  \uu\ar@{=}[d]\ar@{}[rd]|{\Box}\ar[r]&\uv\ar@{}[rd]|{\Box}\ar@{=}[d]\ar[r]^-{\nu^{\dagger}_1} &\vvd\ar@{}[rd]|{\Box}\ar[d]^h\ar[r]^-{j^{\dagger}_2} &(V\ti_kX)^{\dagger}\ar@{}[rd]|{\Box}\ar[r]^-{j^{\dagger}_1}\ar[d]^-g &\xxd\ar[d]^-f\\
  \uu\ar[r]_-{\nu_2}&\uv\ar[r]_-{\nu_1} &\vv\ar[r]_-{j_2} &\vx\ar[r]_-{j_1} &\xx }
\end{equation}
where all horizontal arrows are open immersions. We denote by $\wt j:\uu\ra\xxlp$ the canonical injection.

We denote by $\d:X\ra\xx$ the diagonal map. By the universality of the blow-up, $\d$ induces closed immersions
\begin{equation}\label{ddwtd}
   \d^{\dagger}: X\ra\xxd\ \ \ {\rm and}\ \ \ \wt\d:X\ra\xxlp,
   \end{equation}
and hence, by pull-back, the following closed immersions
\begin{equation}\label{dvdwtdv}
 \d^{\dagger}_V:V\ra (V\ti_kV)^{\dagger} \ \ {\rm and}\ \ \wt\d_V:V\ra\vvlp.
\end{equation}

\subsection{}\label{sheaf setting not}
In the following of this section,  $\sF$ denotes a locally constant and constructible
sheaf of free $\L$-modules on $U$, tamely ramified along $T\cap V$ relatively to $V$.
We put
\begin{eqnarray*}
\sH_0&=&\shom(\pr^*_2\sF,\pr^*_1\sF)\ \ \ {\rm on}\ \ \ \uu,\\
\sH&=&\rshom(\pr_2^*\sF,\R\pr_1^!\sF)\ \ \ {\rm on}\ \ \ \uu,\\
{\ol\sH}&=&\rshom(\pr_2^*j_{!}\sF,\R\pr_1^!j_{!}\sF)\ \ \ {\rm on}\ \ \ \xx,\\
\wt\sH&=&\wt j_*\sH_0(d)[2d]\ \ \ {\rm on}\ \ \ \xxlp.
\end{eqnarray*}
We have a canonical isomorphism $\sH\isora\sH_0(d)[2d]$.

\subsection{}\label{some sHv}
We denote by $\ol\sH_V$ the restriction of $\ol\sH$ to $\vv$ and by $\wt\sH_V$ the restrictions of $\wt\sH$ to $\vvlp$. Notice that $\ol\sH_V\isora\R\shom(\pr^*_2\nu_!\sF,\R\pr_2^!\nu_!\sF)$.
We put
\begin{equation*}
 \ol\sH^{\dagger}_V=\varphi_{1!}(\R\varphi_{2*}(\wt\sH_V))\ \ \ {\rm on}\ \ \ \vvd.
\end{equation*}
Since $\nu_1$ is an open immersion, the base change maps give by composition an isomorphism \eqref{diag5ti2}
\begin{equation*}
  \nu^{\dagger*}_1(\ol\sH^{\dagger}_V)=\nu^{\dagger*}_1\varphi_{1!}(\R\varphi_{2*}(\wt\sH_V))
  \isora\nu^{\rtimes*}_1(\R\varphi_{2*}(\wt\sH_V))\isora\R\nu_{2*}\sH.
\end{equation*}
By \eqref{kunforH}, we have
\begin{equation*}
  h^*(\ol\sH_V)\isora h^*\nu_{1!}\R \nu_{2*}\sH\isora \nu^{\dagger}_{1!}(\R \nu_{2*}\sH)\isora \nu^{\dagger}_{1!}(\nu^{\dagger*}_1(\ol\sH^{\dagger}_V)).
\end{equation*}
It induces a canonical map
\begin{equation}\label{jussmap}
  h^*(\ol\sH_V)\ra \ol\sH^{\dagger}_V,
\end{equation}
that extends the identity of $\sH$ on $\uu$. Since $\sF$ is tamely ramified along the divisor $T\cap V$ relatively to $V$, the adjoint map
\begin{equation*}
\ol\sH_V\ra\R h_*(\ol\sH^{\dagger}_V)
\end{equation*}
is an isomorphism by (\cite{as} 2.2.4).

\subsection{}\label{alphaid}
We put
\begin{equation*}
  \ol\sH^{\dagger}=j^{\dagger}_{1!}(\R j^{\dagger}_{2*}(\ol\sH^{\dagger}_V))
\end{equation*}
on $\xxd$, and we consider the following composition of maps
\begin{equation}\label{defllcckey}
  f^*\ol\sH=f^*j_{1!}\R j_{2*}(\ol\sH_V)\isora j^{\dagger}_{1!}g^*\R j_{2*}(\ol\sH_V)\ra j^{\dagger}_{1!}\R j^{\dagger}_{2*}h^*(\ol\sH_V)\xra{\eqref{jussmap}}j^{\dagger}_{1!}(\R j^{\dagger}_{2*}(\ol\sH^{\dagger}_V))=\ol\sH^{\dagger},
\end{equation}
where the second and the third arrows are induced by the base change maps.
\begin{lemma}[\cite{ts2} Lemma 3.13]
The adjoint map of \eqref{defllcckey}
\begin{equation*}
  \ol\sH\ra\R f_*\ol\sH^{\dagger}
\end{equation*}
is an isomorphism.
\end{lemma}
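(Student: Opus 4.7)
The plan is to construct an isomorphism $\R f_*\ol\sH^{\dagger}\isora\ol\sH$ by chaining base change isomorphisms with the tameness isomorphism of \ref{some sHv}, and then to verify that this coincides with the adjoint of \eqref{defllcckey}.

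First, I would expand the source $\ol\sH$ using the K\"unneth-type formula \eqref{kunforH} applied to the factorization $j=j_V\circ\nu$: taking $g=h=j_V$ and sheaves $\nu_!\sF$ on $V$ gives the canonical isomorphism
\begin{equation*}
\ol\sH\isora j_{1!}\R j_{2*}(\ol\sH_V)\quad\text{on }\xx.
\end{equation*}
The target $\ol\sH^{\dagger}=j^{\dagger}_{1!}\R j^{\dagger}_{2*}(\ol\sH^{\dagger}_V)$ on $\xxd$ is already presented in this parallel form.

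Second, I would compute $\R f_*\ol\sH^{\dagger}$ by moving $\R f_*$ past $j^{\dagger}_{1!}$ and $\R j^{\dagger}_{2*}$ using the two lower-right Cartesian squares of \eqref{diag5ti2}. Since $f$ is proper (being a blow-up), so are its base changes $g$ and $h$; hence $\R f_*=\R f_!$, $\R g_*=\R g_!$, $\R h_*=\R h_!$. Exactness of the open extension $j^{\dagger}_{1!}$ and functoriality of $\R(-)_!$ applied to $f\circ j^{\dagger}_1=j_1\circ g$ give $\R f_*\circ j^{\dagger}_{1!}\isora j_{1!}\circ \R g_*$; similarly, $\R g_*\circ\R j^{\dagger}_{2*}\isora \R j_{2*}\circ \R h_*$ from $g\circ j^{\dagger}_2=j_2\circ h$. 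Combined with the tameness isomorphism $\ol\sH_V\isora\R h_*(\ol\sH^{\dagger}_V)$ recalled in \ref{some sHv} (via \cite{as} 2.2.4), this yields
\begin{equation*}
\R f_*\ol\sH^{\dagger}\isora j_{1!}\R g_*\R j^{\dagger}_{2*}(\ol\sH^{\dagger}_V)\isora j_{1!}\R j_{2*}\R h_*(\ol\sH^{\dagger}_V)\isora j_{1!}\R j_{2*}(\ol\sH_V)\isora \ol\sH.
\end{equation*}

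Finally, I would check that the composed isomorphism so constructed is indeed the adjoint of \eqref{defllcckey}. This is a formal diagram chase: each intermediate arrow in \eqref{defllcckey} is, up to the identification $f^*j_{1!}\isora j^{\dagger}_{1!}g^*$, the pullback of one of the base change maps used above, and the final arrow \eqref{jussmap} has adjoint coinciding with the tameness isomorphism by the very construction of \eqref{jussmap} in \ref{some sHv}. The only non-formal ingredient is the tameness isomorphism of \cite{as} 2.2.4; this is the crux of the argument and the reason for the hypothesis that $\sF$ is tamely ramified along $T\cap V$ relatively to $V$. Everything else is standard six-functor formalism applied to Cartesian squares.
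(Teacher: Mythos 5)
Your argument is correct and takes the natural route, which is essentially the one Tsushima uses in the cited \cite{ts2} Lemma 3.13 (the paper itself gives no independent proof). The key reduction works: since $f\circ j^{\dagger}_1=j_1\circ g$ with $f,g$ proper and $j_1,j^{\dagger}_1$ open, $\R f_!\,j^{\dagger}_{1!}=j_{1!}\R g_!$ gives $\R f_*\,j^{\dagger}_{1!}\isora j_{1!}\R g_*$; similarly $g\circ j^{\dagger}_2=j_2\circ h$ gives $\R g_*\R j^{\dagger}_{2*}\isora\R j_{2*}\R h_*$ directly by functoriality of $\R(-)_*$ (no properness needed here, so the remark that $\R h_*=\R h_!$ is harmless but unnecessary). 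Chaining these and the tameness isomorphism $\ol\sH_V\isora\R h_*(\ol\sH^{\dagger}_V)$ from \ref{some sHv} produces an isomorphism $\R f_*\ol\sH^{\dagger}\isora\ol\sH$.

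The one place you are a bit terse is the final identification with the adjoint of \eqref{defllcckey}. The clean way to see it is to unwind the adjoint explicitly: under the canonical identifications $\R f_*j^{\dagger}_{1!}\isora j_{1!}\R g_*$ and $\R g_*\R j^{\dagger}_{2*}\isora\R j_{2*}\R h_*$, the adjoint of the composite \eqref{defllcckey} is precisely $j_{1!}\R j_{2*}$ applied to the composite
\begin{equation*}
\ol\sH_V\xra{\text{unit}}\R h_*h^*(\ol\sH_V)\xra{\R h_*\eqref{jussmap}}\R h_*(\ol\sH^{\dagger}_V),
\end{equation*}
which is exactly the adjoint map declared to be an isomorphism in \ref{some sHv} by (\cite{as} 2.2.4). (The intermediate compatibility — that the adjoint of the base change map $g^*\R j_{2*}\ra\R j^{\dagger}_{2*}h^*$ is $\R j_{2*}$ of the unit, and the analogous fact for the $j_1/j^{\dagger}_1$ square — is standard but worth stating rather than gesturing at.) Since $j_{1!}$ and $\R j_{2*}$ preserve isomorphisms, the conclusion follows. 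So your proof is sound; adding this last identification explicitly would make the ``formal diagram chase'' fully rigorous.
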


\subsection{}\label{sLdagger}
We put $X^{\dagger}=f^{-1}(\d(X))$ and $S^{\dagger}=f^{-1}(\d(S))$. We denote by $\gamma^{\dagger}:\xxd\backslash X^{\dagger}\ra\xxd$ the canonical injection, which is an open immersion and put
\begin{equation}\label{sLdagger'}
\sL^{\dagger}=\R\g^{\dagger}_*(\L).
\end{equation}
The map \eqref{defllcckey} induces by pull-back a map
\begin{equation}\label{fupstar}
  \h^0_X(\xx,\ol\sH)\ra\h^0_{X^{\dagger}}(\xxd,\ol\sH^{\dagger}).
\end{equation}
The adjunction $\L\ra \sL^{\dagger}$ induces a map
\begin{equation}\label{adjg'}
\h^0_{X^{\dagger}}(\xxd,\ol\sH^{\dagger})\ra\h^0_{X^{\dagger}}(\xxd,\ol\sH^{\dagger}\ot^L \sL^{\dagger}).
\end{equation}
By \eqref{gpd}, we have a canonical isomorphism $\End(j_!\sF)\isora\h^0_X(\xx,\ol\sH)$. We denote also $\id_{j_!\sF}$ the image of $\id_{j_!\sF}\in\End(j_!\sF)$ in $\h^0_X(\xx,\ol\sH)$. Its image in
$\h^0_{X^{\dagger}}(\xxd,\ol\sH^{\dagger}\ot^L\sL^{\dagger})$ by the composition of the maps \eqref{fupstar} and \eqref{adjg'} will be denoted by
$\alpha(j_!\sF)$.

\begin{proposition}[\cite{ts2}, 3.14 and 3.15]\label{injkeylemma}
The canonical map
\begin{equation}\label{injkey}
  \h^0_{S^{\dagger}}(\xxd,\ol\sH^{\dagger}\ot^L\sL^{\dagger})\ra \h^0_{X^{\dagger}}(\xxd,\ol\sH^{\dagger}\ot^L\sL^{\dagger})
\end{equation}
is injective and there exists a unique element
 \begin{equation}\label{alpha0}
  \alpha_0(j_!\sF)\in \h^0_{S^{\dagger}}(\xxd,\ol\sH^{\dagger}\ot^L\sL^{\dagger})
 \end{equation}
whose image by \eqref{injkey} is $\alpha(j_!\sF)$.
\end{proposition}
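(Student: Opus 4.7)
The plan is to strengthen the statement and show that the map \eqref{injkey} is in fact an \emph{isomorphism}; this immediately yields both injectivity and the existence of a unique preimage $\alpha_0(j_!\sF)$ of $\alpha(j_!\sF)$. The approach combines the formalism of \ref{delta} with the acyclicity criterion of \ref{acyclic}. Let $i=\d^{\dagger}:X^{\dagger}\to\xxd$ denote the closed immersion and $\g^{\dagger}:\xxd\setminus X^{\dagger}\to\xxd$ its complementary open immersion, so that by \eqref{sLdagger'} we have $\sL^{\dagger}=\R\g^{\dagger}_*\L$.

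The first step is to apply the functor $\Delta_i$ of \ref{delta} to identify $\R i^!(\ol\sH^{\dagger}\ot^L\sL^{\dagger})=\Delta_i(\ol\sH^{\dagger})$, so that the groups appearing in \eqref{injkey} rewrite intrinsically as $\h^0(X^{\dagger},\Delta_i(\ol\sH^{\dagger}))$ and $\h^0_{S^{\dagger}}(X^{\dagger},\Delta_i(\ol\sH^{\dagger}))$. I would then apply \ref{acyclic} with ambient scheme $\xxd$, closed subscheme $Z=X^{\dagger}$, complementary open immersion $\g^{\dagger}$, and auxiliary open $U=\xxd\setminus S^{\dagger}$, so that $Z\setminus U=S^{\dagger}$ and $Z\cap U=X^{\dagger}\setminus S^{\dagger}$ is the tame part of $X^{\dagger}$. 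The conclusion of \ref{acyclic} is precisely that \eqref{injkey} is an isomorphism, \emph{provided} that the cohomology sheaves $\sH^q(\ol\sH^{\dagger})|_U$ are locally constant and constructible for every integer $q$.

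The main obstacle is verifying this local constancy. By construction $\ol\sH^{\dagger}=j^{\dagger}_{1!}\R j^{\dagger}_{2*}(\ol\sH^{\dagger}_V)$ vanishes outside $(V\ti_kX)^{\dagger}$, and on $\vvd$ coincides with $\ol\sH^{\dagger}_V=\varphi_{1!}\R\varphi_{2*}(\wt\sH_V)$. The essential input is the hypothesis that $\sF$ is tamely ramified along $T\cap V$ relatively to $V$, which via \ref{some sHv} produces the isomorphism $\ol\sH_V\iso\R h_*(\ol\sH^{\dagger}_V)$ and, more importantly, should force $\ol\sH^{\dagger}_V$ to have locally constant cohomology on an open neighborhood of $\d^{\dagger}_V(V)$ in $\vvd$ that avoids any wild contribution. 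A careful local analysis of the blow-up structure of $\xxd$ along the tame divisors $\{D_i\ti_kD_i\}_{i\in I_{\rmt}}$, propagated along the open immersions of \eqref{diag5ti2}, is then needed to pass from this statement on $\vvd$ to local constancy of $\sH^q(\ol\sH^{\dagger})|_U$ on the whole of $U=\xxd\setminus S^{\dagger}$. Granting this, \ref{acyclic} produces the desired bijection, and $\alpha_0(j_!\sF)$ is simply the unique preimage of $\alpha(j_!\sF)$.
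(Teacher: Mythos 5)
The proposal tries to upgrade the statement to an \emph{isomorphism} via a direct application of \ref{acyclic}, but this cannot work, and the gap you flag yourself is not just a technical detail left to check: it is unfillable as stated. To apply \ref{acyclic} with $Z=X^{\dagger}$ and the auxiliary open $U=\xxd\setminus S^{\dagger}$ you need $\sH^q(\ol\sH^{\dagger})|_U$ to be locally constant for every $q$. But $\ol\sH^{\dagger}=j^{\dagger}_{1!}\R j^{\dagger}_{2*}(\ol\sH^{\dagger}_V)$ is an extension by zero from the open subscheme $(V\ti_kX)^{\dagger}=\xxd\setminus f^{-1}(S\ti_kX)$, so its cohomology sheaves vanish identically on $f^{-1}(S\ti_kX)$ while being nonzero on $(V\ti_kX)^{\dagger}$. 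Since $S^{\dagger}=f^{-1}(\d(S))$ is of strictly smaller dimension than $f^{-1}(S\ti_kX)$, the open $U=\xxd\setminus S^{\dagger}$ meets both regions and, in particular, contains points on the boundary of $(V\ti_kX)^{\dagger}$ where the stalks jump from nonzero to zero. No amount of ``careful local analysis of the blow-up structure'' can produce local constancy across that boundary; the hypothesis of \ref{acyclic} simply fails for this choice of $U$, and there is no evident replacement $U$ with $U\cap X^{\dagger}=X^{\dagger}\setminus S^{\dagger}$ on which the hypothesis could be verified.

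Beyond this, attempting to prove a bijection misreads the statement and would establish more than is true. The proposition is deliberately phrased as ``injective, and the particular element $\alpha(j_!\sF)$ admits a (necessarily unique) preimage $\alpha_0(j_!\sF)$.'' This formulation is the telltale sign that surjectivity is not expected; compare with the analogous map $\lambda$ in \eqref{lambda focus}, which the paper shows to be an isomorphism only under the special hypothesis $\rk_{\L}\sF=1$ (Proposition \ref{lambda rank 1 iso}) and merely injective under $T\cap S=\emptyset$ (Proposition \ref{intersection empty}). A proof of \ref{injkeylemma} therefore has two genuinely separate steps: (a) the injectivity of \eqref{injkey}, which follows by a local cohomology computation localized at $X^{\dagger}\setminus S^{\dagger}$ but in negative degree only; and (b) the vanishing of the image of the \emph{specific} class $\alpha(j_!\sF)$ in $\h^0_{X^{\dagger}\setminus S^{\dagger}}(\xxd\setminus S^{\dagger},\ol\sH^{\dagger}\ot^L\sL^{\dagger})$, which must use the concrete structure of $\alpha(j_!\sF)$: over the tame locus $X^{\dagger}\setminus S^{\dagger}$ the class is pulled back from $\id_{j_!\sF}$, and the tameness of $\sF$ along $T\cap V$ makes this contribution die. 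Your proposal does not address (b) at all, and its treatment of (a) relies on the unverifiable local constancy. This is the same division of labour as in Tsushima's Lemmas 3.14 and 3.15, which the paper cites, and it cannot be collapsed into a single acyclicity argument of the kind you propose.
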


\subsection{}
The squares of the following commutative diagram
\begin{equation*}
  \xymatrix{\relax
  X\ar@{}[rd]|{\Box}\ar[d]_{\d'}&V\ar@{}[rd]|{\Box}\ar[l]_{j_V}\ar[d]&V\ar@{}[rd]|{\Box}\ar@{=}[l]\ar[d]^{\d_V^{\dagger}}&V\ar@{}[rd]|{\Box}\ar@{=}[l]\ar[d]^{\wt\d_V}&U\ar[l]_-{\nu}\ar[d]^{\d_U}\\
  \xxd&(V\ti_k X)^{\dagger}\ar[l]_-{j^{\dagger}_1}&\vvd\ar[l]_-{j^{\dagger}_2}&\vvlp\ar[l]_-{\varphi_1\circ\varphi_2}&\uu\ar[l]_-{\wt\nu}}
\end{equation*}
are Cartesian and all the horizontal arrows are open immersions. By (\ref{sheaf bound}) and (\ref{tame ram}), since $\sF$ is tamely ramified along $T\cap V$ relatively to $V$, we have an isomorphism
\begin{equation}\label{tame iso}
  \wt\d_V^*(\wt\nu_*(\sH_0))\isora \nu_*(\d_U^*(\sH_0)).
\end{equation}
The base change maps give by composition an isomorphism
\begin{equation}\label{locev1}
  \d^{\dagger *}(\ol\sH^{\dagger})\isora j_{V!}\d_V^{\dagger *}(\ol\sH^{\dagger}_V)\isora j_{V!}\wt\d_V^*(\wt\sH_V)\isora j_{V!} \nu_*(\send(\sF))(d)[2d],
\end{equation}
where the third arrow is \eqref{tame iso}. There exists a unique map
\begin{equation}\label{locev2}
  \mathrm{Tr}_V:\nu_*(\send(\sF))\ra \L_{V}
\end{equation}
which extends the trace map $\mathrm{Tr}:\send(\sF)\ra \L_U$ (\cite{as} (2.9)). The maps  \eqref{locev1} and \eqref{locev2} give an evaluation map
\begin{equation}
\ev^{\dagger}:\d^{\dagger *}(\ol\sH^{\dagger})\ra j_{V!}(\sK_{V}).
\end{equation}
Composing with the canonical map $j_{V!}(\sK_V)\ra\sK_X$, we obtain a morphism
\begin{equation}\label{ev'}
  \h^0_{S}(X,\d^{\dagger *}(\ol\sH^{\dagger})\ot^L\d^{\dagger *}\sL^{\dagger})\ra\h^0_S(X, \sK_X\ot^L\d^{\dagger *}\sL^{\dagger}).
\end{equation}
The pull-back by $\d^{\dagger}$ gives a morphism
\begin{equation}\label{pullback d'}
\h^0_{S^{\dagger}}(\xxd,\ol\sH^{\dagger}\ot^L \sL^{\dagger})\ra \h^0_{S}(X,\d^{\dagger *}(\ol\sH^{\dagger})\ot^L\d^{\dagger *}\sL^{\dagger}).
\end{equation}
Composing \eqref{ev'} and \eqref{pullback d'}, we get a map
\begin{equation}\label{1,2,3}
  \h^0_{S^{\dagger}}(\xxd,\ol\sH^{\dagger}\ot^L \sL^{\dagger})\ra \h^0_S(X, \sK_X\ot^L\d^{\dagger *}\sL^{\dagger}).
\end{equation}

\begin{lemma}[\cite{ts1} Lemma 2.3]\label{isoadjunction}
The canonical map
\begin{equation}\label{isoadjdg}
  \h^0_{S}(X,\sK_X)\ra H^0_{S}(X,\sK_X\ot^L\d^{\dagger *}\sL^{\dagger})
\end{equation}
induced by the canonical map $\L\ra\d^{\dagger *}\sL^{\dagger}$, is an isomorphism.
\end{lemma}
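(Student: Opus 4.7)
The plan is to reduce the claimed isomorphism to a local cohomological vanishing along $S$. Writing $\d^{\dagger}=i^{\dagger}\circ\iota$ where $i^{\dagger}:X^{\dagger}\ra\xxd$ is the closed immersion and $\iota:X\ra X^{\dagger}$ is the induced closed immersion, I pull back the standard distinguished triangle $i^{\dagger}_*\R i^{\dagger !}\L\ra\L\ra\sL^{\dagger}\ra$ by $\d^{\dagger}$ and use the identity $i^{\dagger *}i^{\dagger}_*=\id$ to obtain a distinguished triangle
\begin{equation*}
\iota^*\R i^{\dagger !}\L\ra\L\ra\d^{\dagger *}\sL^{\dagger}\ra
\end{equation*}
on $X$. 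After tensoring with $\sK_X$ and applying $\R\G_S$, the associated long exact sequence reduces the lemma to proving the vanishing $\h^i_S(X,\sK_X\ot^L \iota^*\R i^{\dagger !}\L)=0$ for $i=0$ and $i=1$.

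For a point $s\in S\setminus T$, the blow-up $f:\xxd\ra \xx$ is an isomorphism near $\d(s)$, since none of the tame centers $D_j\ti_k D_j$ with $j\in I_{\rmt}$ pass through $\d(s)$. Near $\d^{\dagger}(s)$, the closed immersion $i^{\dagger}$ is then identified with $\d:X\ra\xx$, which is smooth of codimension $d$. By absolute purity, $\iota^*\R i^{\dagger !}\L\cong \L(-d)[-2d]$ locally near $s$, so that $\sK_X\ot^L\iota^*\R i^{\dagger !}\L\cong \L$ there. The vanishing of the stalks of $\sH^0_S(\L)$ and $\sH^1_S(\L)$ at a simple normal crossings point of $S$ in the smooth scheme $X$ follows from the standard long exact sequence of the triple $(X, X-S, \L)$ combined with the computation that the restriction $\h^i(X_{(\ol x)},\L)\ra \h^i(X_{(\ol x)}-S,\L)$ is an isomorphism for $i=0,1$, a consequence of the tameness of the étale fundamental group of the complement of an SNC divisor.

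For a point $s\in S\cap T$, the log blow-up requires explicit analysis. Taking étale coordinates $x_1,\dots,x_d$ around $s$ in which the components of $D$ through $s$ are coordinate hyperplanes, I identify the irreducible components of $X^{\dagger}$ meeting $\d^{\dagger}(s)$: the strict transform $\d^{\dagger}(X)$, together with, for each tame index $j$ with $s\in D_j$, a codimension-$d$ smooth exceptional component lying over $\d(D_j)$. Each such component contributes a copy of $\L(-d)[-2d]$ to $\R i^{\dagger !}\L$ on its smooth locus by absolute purity, and the mutual intersections of these components are contained in $\d^{\dagger}(T)$. After pulling back to $X$ via $\iota$, the contribution from the extraneous exceptional components has support inside $T$, which meets $S$ only along $S\cap T$. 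Combining the Mayer--Vietoris distinguished triangles for the irreducible components of $X^{\dagger}$ with the local cohomological vanishing from the previous case gives $\h^0_S(X,\sK_X\ot^L\iota^*\R i^{\dagger !}\L)=\h^1_S(X,\sK_X\ot^L\iota^*\R i^{\dagger !}\L)=0$ also in this case.

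The principal obstacle is the combinatorial bookkeeping at wild--tame crossings: one must check that the contributions from the several codimension-$d$ smooth components of $X^{\dagger}$ combine cleanly through the Mayer--Vietoris triangles so that no extraneous class survives in degrees $0$ or $1$ of local cohomology along $S$. Once this is established, the long exact sequence produced in the first paragraph yields the stated isomorphism.
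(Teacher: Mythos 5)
Your reduction in the first paragraph to the vanishing of $\h^i_S(X,\sK_X\ot^L\iota^*\R i^{\dagger !}\L)$ for $i=0,1$ is correct. But already in the second paragraph the justification for the $i=1$ vanishing is off: the restriction $\h^1(X_{(\ol x)},\L)\to\h^1(X_{(\ol x)}-S,\L)$ is \emph{not} an isomorphism, since the target is nonzero in general (it is $\L(-1)$ already for a single smooth divisor). What is actually used is only that the source vanishes because $X_{(\ol x)}$ is strictly henselian; tameness of $\pi_1$ of the complement of an SNC divisor plays no role. The conclusion $\sH^0_S(\L)_{\ol x}=\sH^1_S(\L)_{\ol x}=0$ is still correct via the long exact sequence, using $\h^0(X_{(\ol x)},\L)\isora\h^0(X_{(\ol x)}-S,\L)$ together with $\h^1(X_{(\ol x)},\L)=0$.

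The serious gap is in the third paragraph, and it is combinatorial as well as logical. Since $\xxd$ is the fibre product over $\xx$ of the blow-ups $\xxlb_j$ for $j\in I_{\rmt}$, if $s$ lies on $m$ tame components then near $\d^{\dagger}(s)$, in coordinates with $y_j=x_jt_j$, the preimage $X^{\dagger}=f^{-1}(\d(X))$ is cut out by the wild diagonal equations together with $\prod_j x_j(t_j-1)=0$; this has $2^m$ irreducible components, one for each subset of the tame indices through $s$ — not the $m+1$ you claim. In particular there is an exceptional component over $\d(D_J)$ for every nonempty subset $J$ of those tame indices, not only for the singletons. Moreover your assertion that ``the mutual intersections of these components are contained in $\d^{\dagger}(T)$'' is false: with two tame components $D_2,D_3$ through $s$, the component over $\d(D_2)$ (where $x_2=0$, $t_3=1$) meets the component over $\d(D_2\cap D_3)$ (where $x_2=x_3=0$) along a locus on which $t_2$ is unconstrained, hence not inside $\d^{\dagger}(T)$. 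What is true — and what your $\iota$-pullback argument actually needs — is only the weaker statement that $\d^{\dagger}(X)\cap X_\alpha\subset\d^{\dagger}(T)$ for every other component $X_\alpha$, so that the extraneous terms of the closed Mayer--Vietoris tower for $\R i^{\dagger !}\L$, after $\iota^*$ and $\sK_X\ot^L-$, are supported on smooth strata $T_J\subset T$ with $S\cap T_J$ a proper SNC divisor of $T_J$. Finally, the Mayer--Vietoris step itself is only asserted, as you acknowledge, and given the erroneous description of the components and of their intersections it does not constitute a proof. (The paper cites Tsushima \cite{ts1} Lemma~2.3 and gives no independent argument, so there is no route to compare against directly.)
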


\subsection{}\label{defllcc}
Composing \eqref{1,2,3} and the inverse of \eqref{isoadjdg}, we get a map
\begin{equation}\label{kappa}
  \kappa:\h^0_{S^{\dagger}}(\xxd,\ol\sH^{\dagger}\ot^L \sL^{\dagger})\ra\h^0_S(X, \sK_X).
\end{equation}
We call $\kappa(\alpha_0(j_!\sF))\in\h^0_{S}(X,\sK_X)$ the {\it refined characteristic cycle} of $j_!\sF$, and we denote it by $C_{S}(j_!\sF)$.
\begin{remark}
If $T=\emptyset$, we have $(\xx)^{\dagger}=\xx$, $(\vv)^{\dagger}=\vvlp=\uu$, $X^{\dagger}=X$, $S^{\dagger}=S$ and, by \eqref{kunforH}, $\ol\sH^{\dagger}=\ol\sH$. It is easy to see that (\ref{lcc as})
\begin{equation*}
  C_S(j_!(\sF))=C_S^0(j_!(\sF))\in\h^0_{S}(X,\sK_X).
\end{equation*}

\end{remark}

\subsection{}\label{X to Y not}
In the following of this section, $Y$ denotes a connected smooth $k$-scheme, $Z$ a closed subscheme of $Y$ and $W=Y-Z$ the complementary open subscheme of $Z$ in $Y$. We assume that there exists a proper flat morphism $\pi:X\ra Y$ such that $V=\pi^{-1}(W)$, $Q=\pi^{-1}(Z)$ is a divisor with normal crossing, that $S=Q_{\mathrm{red}}$, that the canonical projection $\pi_V:V\ra W$ is smooth and that $T\cap V$ is a divisor with simple normal crossing relatively to $W$. We have a commutative diagram with Cartesian squares
\begin{equation*}
  \xymatrix{\relax
  U\ar[r]^-{\nu}\ar[rd]_{\pi_U}&V\ar@{}|{\Box}[rd]\ar[r]^{j_V}
  \ar[d]^{\pi_V}&X\ar@{}|{\Box}[rd]\ar[d]^{\pi}&Q\ar[l]_-{i_Q}\ar[d]^-{\pi_Q}\\
  & W\ar[r]^{j_W}&Y&Z\ar[l]_-{i_Z}}
\end{equation*}

We make the following remarks:
\begin{itemize}
\item[(i)]
 For any locally constant and constructible sheaf of $\L$-modules $\sG$ tamely ramified along the divisor $T\cap V$ relatively to $V$, $\pi_V$ is universally locally acyclic relatively to $\nu_!(\sG)$ (\cite{sga4 1/2} Appendice to Th. Finitude, \cite{wrcb} 3.14). Since $\pi_V$ is proper, all cohomology groups of $\R\pi_{U!}(\sG)$ are locally constant and constructible on $W$.
\item[(ii)]
Since $\pi$ is proper, we have a push-forward
\begin{equation}\label{proper pushf}
  \h^0_S(X,\sK_X)\isora\h^0_Q(X,\sK_X)\ra\h^0_Z(Y,\sK_Y)
\end{equation}
defined by applying the functor $\h^0(Z,-)$ to the following composed map
\begin{equation*}
\R\pi_{Q*}(\sK_Q)\isora\R\pi_{Q*}\R\pi_Q^!(\sK_Z)\isora \R\pi_{Q!}\R \pi^!_Q(\sK_Z)\ra \sK_Z,
\end{equation*}
where the third arrow is induced by the adjunction.
\end{itemize}

\begin{theorem}[Localized Lefschetz-Verdier trace formula, \cite{ts2} 5.4]\label{loc lef tr}
We have {\rm(\ref{lcc as}, \ref{defllcc})}
\begin{equation*}
  \pi_*(C_S(j_!(\sF)))=C^0_Z(j_{W!}(\R\pi_{U!}(\sF)))
\end{equation*}
in $\h^0_Z(Y,\sK_Y)$.
\end{theorem}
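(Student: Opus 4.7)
The plan is to deduce the equality from functoriality of cohomological correspondences under proper push-forward, as developed in $\S 4$. Under the isomorphism $\End(j_!(\sF))\isora \h^0_X(\xx,\ol\sH)$ coming from \eqref{gpd}, the identity endomorphism $\id_{j_!(\sF)}$ produces both the class $\alpha(j_!(\sF))$ and, by the injectivity \eqref{injkey}, its unique lift $\alpha_0(j_!(\sF))$ supported on $S^{\dagger}$. Since $\pi$ is proper, the proper base change isomorphism together with the adjunction $\R\pi_!\R\pi^!\ra\id$ shows that the push-forward of $\id_{j_!(\sF)}$ as a cohomological correspondence along $\pi\ti\pi:\xx\ra\yz$ coincides with $\id_{\R\pi_*j_!(\sF)}=\id_{j_{W!}\R\pi_{U!}(\sF)}$; here I use the hypothesis in \ref{X to Y not} that $\R\pi_{U!}(\sF)$ is locally constant and constructible on $W$.

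First I would set up the $Y$-side analogue of the refined construction of \ref{defllcc}. On $Y$ there is no tame component to the boundary, so the dagger blow-up reduces to $\yz$ itself, the analogue $\ol\sH^{\dagger}_Y$ coincides with $\R\shom(\pr_2^*j_{W!}\R\pi_{U!}(\sF),\R\pr_1^!j_{W!}\R\pi_{U!}(\sF))$, and the map $\kappa$ of \eqref{kappa} collapses to the definition of the localized characteristic class $C^0_Z(j_{W!}\R\pi_{U!}(\sF))$ given in \ref{lcc as}. The core technical step is to verify that the composite $(\pi\ti\pi)\circ f:\xxd\ra\yz$ sends $\alpha_0(j_!(\sF))$ to the $Y$-analogue of $\alpha_0$ attached to $j_{W!}\R\pi_{U!}(\sF)$. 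This reduces, via \eqref{kunforH} and proper base change, to two compatibilities: first, that $\R(\pi\ti\pi)_*\ol\sH$ is canonically identified with $\ol\sH_Y$ on $\yz$; and second, that the auxiliary sheaf $\sL^{\dagger}$ becomes transparent after push-forward, because the relative normal crossing hypothesis on $T\cap V$ over $W$ forces the higher direct images of $\sL^{\dagger}$ along $(\pi\ti\pi)\circ f$ over $(\yz)\setminus\d(Y)$ to vanish.

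Finally, I would compare $\kappa$ on $X$ with the analogous map on $Y$ through the diagonals $\d^{\dagger}:X\ra\xxd$ and $\d_Y:Y\ra\yz$. Proper push-forward commutes with pull-back along these diagonals by the base change theorem applied to the Cartesian square formed by $\pi$ and $\d_Y$, and it commutes with the evaluation map \eqref{ev'} because the trace map $\mathrm{Tr}_V$ of \eqref{locev2} is compatible with $\R\pi_{V*}$, and the canonical morphism $j_{V!}(\sK_V)\ra\sK_X$ corresponds, via \eqref{proper pushf}, to the analogous morphism $j_{W!}(\sK_W)\ra\sK_Y$ on $Y$. The hard part will be the second compatibility of the previous paragraph: one must control the strict transforms of $\{D_i\ti_kD_i\}_{i\in I_{\rmt}}$ and the behavior of $\sL^{\dagger}$ under proper push-forward along $\pi\ti\pi$, which requires a delicate local analysis near the generic points of $T\cap V$ that exploits the tame ramification of $\sF$ relative to $V$ together with properness of $\pi_Q:Q\ra Z$.
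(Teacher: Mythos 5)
The paper does not prove this theorem; it is stated with the attribution \cite{ts2} 5.4 and imported wholesale from Tsushima's article, so there is no in-paper argument to compare your sketch against. That said, your outline does capture the expected architecture of a localized Lefschetz--Verdier argument: proper push-forward of $\id_{j_!\sF}$ as a cohomological correspondence along $\pi\ti\pi$, using $\R\pi_*j_! = j_{W!}\R\pi_{U!}$ (which follows from $\pi\circ j_V = j_W\circ\pi_V$ and properness); the observation that on the $Y$-side the tame boundary is empty, so the refined construction of \ref{defllcc} collapses to the localized characteristic class $C^0_Z$ of \ref{lcc as}, as the remark after \ref{defllcc} records; and compatibility of the evaluation maps and diagonal pull-backs with push-forward.

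The gap, which you yourself flag as ``the hard part,'' is the load-bearing step: showing that $\alpha_0(j_!\sF)$ (the unique lift supported on $S^{\dagger}$ furnished by \ref{injkeylemma}) pushes forward through $f:\xxd\ra\xx$ and $\pi\ti\pi$ to the analogous class attached to $j_{W!}\R\pi_{U!}\sF$ on $Y$. You reduce this to controlling $\sL^{\dagger}$ under push-forward and assert the vanishing of its higher direct images over $(Y\ti_kY)\backslash\d(Y)$, but you do not argue it. That assertion conceals the actual technical work: the behaviour of $\xxd$ and the strict transforms of $\{D_i\ti_kD_i\}_{i\in I_{\rmt}}$ under $(\pi\ti\pi)\circ f$, a base-change analysis showing $\R(\pi\ti\pi)_*f_*(\ol\sH^{\dagger}\ot^L\sL^{\dagger})$ matches the $Y$-side object over $\d(Y)$, and compatibility of the two uniqueness statements from \ref{injkeylemma} on the $X$- and $Y$-sides. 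This is precisely what is established in \cite{ts2} \S 5. Until that step is supplied, what you have is a correct high-level roadmap rather than a proof.
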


\subsection{}\label{as formula}
We assume that $Y$ is of dimension $1$ and that $Z$ is a closed point $y$ of $Y$, and we denote by $\ol y$ a geometric point of $Y$ localized at $y$, by $Y_{(\ol y)}$ the strict localization of $Y$ at $\ol y$, by $\ol\eta$ a geometric generic point of $Y_{(\ol y)}$. For any object $\sG$ of $\dctf(W,\L)$ with locally constant cohomology groups. We put (\cite{sga4 1/2} Rapport 4.4)
\begin{eqnarray*}
\rk_{\L}(\sG_{\ol\eta})&=&\mathrm{Tr}(\id;\sG_{\ol\eta}),\\
\mathrm{sw}_y(\sG_{\ol\eta})&=&\sum_{q\in\bZ}(-1)^q\mathrm{sw}_y((\sH^q(\sG))_{\ol\eta}),\\
\mathrm{dimtot}_y(\sG_{\ol\eta})&=&\rk_{\L}(\sG_{\ol\eta})+\mathrm{sw}_y(\sG_{\ol\eta}),
\end{eqnarray*}
where $\mathrm{sw}_y((\sH^q(\sG))_{\ol\eta})$ denotes the Swan conductor of $(\sH^q(\sG))_{\ol\eta}$ at $y$ (\cite{serre gr} 19.3).
By (\cite{ts2} 4.1), we have
\begin{equation}\label{lcc to sw}
 C^0_{\{y\}}(j_{W!}(\sG))-\rk_{\L}(\sG|_{\ol\eta})\cdot C^0_{\{y\}}(j_{W!}(\L_W)) =-\mathrm{sw}_{y}(\sG_{\ol\eta})
\end{equation}
in $\h^0_{\{y\}}(Y,\sK_Y)\isora\L$. In fact, the proof of \eqref{lcc to sw} is simpler than the general case treated in (\cite{ts2} 4.1), since $Y$ is of dimension $1$, we can use the usual Swan conductor rather than the generalized one (\cite{ksann} 4.2.2).

\begin{corollary}[\cite{ts2} 5.5]\label{llcc ns cor}
Keep the notation and assumptions of {\rm\ref{as formula}}. We have
\begin{equation}\label{llcc ns}
  \mathrm{sw}_y(\R\G_c(U_{\ol\eta},\sF|_{U_{\ol\eta}}))-\rk_{\L}(\sF)
  \cdot\mathrm{sw}_y(\R\G_c(U_{\ol\eta},\L))=-\pi_*(C_S(j_!(\sF))-\rk_{\L}(\sF)\cdot C_S(j_!(\L_U)))
  \end{equation}
in $\h^0_{\{y\}}(Y,\sK_Y)\isora\L$.
\end{corollary}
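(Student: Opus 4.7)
The plan is to combine the localized Lefschetz-Verdier trace formula (Theorem \ref{loc lef tr}) with the curve case identity \eqref{lcc to sw} in a two-step manner, applied once to $\sF$ and once to the constant sheaf $\L_U$.

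First, I would apply Theorem \ref{loc lef tr} to both $\sF$ and $\L_U$ to obtain
\begin{equation*}
\pi_*(C_S(j_!(\sF)))=C^0_{\{y\}}(j_{W!}(\R\pi_{U!}(\sF))),\qquad \pi_*(C_S(j_!(\L_U)))=C^0_{\{y\}}(j_{W!}(\R\pi_{U!}(\L_U))),
\end{equation*}
in $\h^0_{\{y\}}(Y,\sK_Y)$. Remark (i) of \ref{X to Y not} guarantees that the cohomology sheaves of $\R\pi_{U!}(\sF)$ and $\R\pi_{U!}(\L_U)$ are locally constant and constructible on $W$, so the hypothesis of \ref{as formula} is satisfied and \eqref{lcc to sw} can be applied to $\sG=\R\pi_{U!}(\sF)$ and to $\sG=\R\pi_{U!}(\L_U)$. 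This yields
\begin{equation*}
C^0_{\{y\}}(j_{W!}(\R\pi_{U!}\sF))-\rk_{\L}(\R\G_c(U_{\ol\eta},\sF|_{U_{\ol\eta}}))\cdot C^0_{\{y\}}(j_{W!}(\L_W))=-\mathrm{sw}_y(\R\G_c(U_{\ol\eta},\sF|_{U_{\ol\eta}})),
\end{equation*}
and the analogous identity with $\sF$ replaced by $\L_U$.

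Next, I would form the appropriate $\L$-linear combination: subtract $\rk_\L(\sF)$ times the second identity from the first. The key observation making the rank contributions collapse is that since $\sF$ is locally constant of rank $\rk_\L(\sF)$ on $U_{\ol\eta}$, the Euler characteristic is multiplicative, so
\begin{equation*}
\rk_{\L}(\R\G_c(U_{\ol\eta},\sF|_{U_{\ol\eta}}))=\rk_{\L}(\sF)\cdot\rk_{\L}(\R\G_c(U_{\ol\eta},\L)),
\end{equation*}
and consequently the two $C^0_{\{y\}}(j_{W!}(\L_W))$ terms cancel. Substituting the push-forward identities from Theorem \ref{loc lef tr} into the resulting expression gives exactly \eqref{llcc ns}.

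The proof is essentially a bookkeeping exercise once the two major tools (Theorem \ref{loc lef tr} and identity \eqref{lcc to sw}) are in place; there is no technical obstacle beyond verifying the Euler-characteristic multiplicativity above, which is standard. The genuine content of the corollary lies entirely in the two inputs it chains together, namely Tsushima's Lefschetz-Verdier formula for the refined characteristic class and the fact that on a smooth curve the localized characteristic class computes the Swan conductor.
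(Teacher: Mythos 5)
Your proof follows the paper's argument exactly: apply Theorem~\ref{loc lef tr} to both $\sF$ and $\L_U$, apply~\eqref{lcc to sw} to $\R\pi_{U!}(\sF)$ and $\R\pi_{U!}(\L_U)$, use rank multiplicativity to cancel the $C^0_{\{y\}}(j_{W!}(\L_W))$ terms, and finish with proper base change to identify $\R\pi_{U!}(\sF)|_{\ol\eta}$ with $\R\G_c(U_{\ol\eta},\sF|_{U_{\ol\eta}})$.

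One small but genuine caveat in your justification: the identity
\[
\rk_{\L}(\R\G_c(U_{\ol\eta},\sF|_{U_{\ol\eta}}))=\rk_{\L}(\sF)\cdot\rk_{\L}(\R\G_c(U_{\ol\eta},\L))
\]
does \emph{not} follow merely from $\sF$ being locally constant of rank $\rk_\L(\sF)$ on $U_{\ol\eta}$ (this is false for wildly ramified sheaves on open varieties, where the Swan conductor contributes to the Euler characteristic with compact support). What makes it hold here is the hypothesis that $\sF$ is tamely ramified along $T\cap V$ relative to $V$; this is the content of the Deligne--Illusie result cited in the paper as \cite{illusie}~2.7. You should replace the phrase ``which is standard'' with an explicit appeal to this tameness assumption. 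Apart from that, the proof is complete and coincides with the paper's.
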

\begin{proof}

Since $\sF$ is tamely ramified along $T\cap V$ relatively to $V$, we have (\cite{illusie} 2.7)
\begin{equation*}
\rk_{\L}(\R\pi_{U!}(\sF)|_{\ol\eta})=\rk_{\L}(\sF)\cdot\rk_{\L}(\R\pi_{U!}(\L_U)|_{\ol\eta}).
\end{equation*}
Then, by \eqref{loc lef tr} and \eqref{lcc to sw},
\begin{eqnarray}
&&\pi_*(C_S(j_!(\sF))-\rk_{\L}(\sF)\cdot C_S(j_!(\L_U))) \nonumber\\
&=&C^0_{\{y\}}(j_{W!}\R\pi_{U!}(\sF))-\rk_{\L}(\sF)\cdot C^0_{\{y\}}(j_{W!}\R\pi_{U!}(\L_U))\nonumber\\
&=&C^0_{\{y\}}(j_{W!}\R\pi_{U!}(\sF))-\rk_{\L}(\R\pi_{U!}(\sF)|_{\ol\eta})\cdot C^0_{\{y\}}(j_{W!}(\L_W)) \nonumber    \\
&&-\rk_{\L}(\sF)\cdot\left(C^0_{\{y\}}(j_{W!}\R\pi_{U!}(\L_U))-\rk_{\L}(\R\pi_{U!}(\L)|_{\ol\eta})\cdot C^0_{\{y\}}(j_{W!}(\L_W))\right)\nonumber\\
&=&-\mathrm{sw}_{y}(\R\pi_{U!}(\sF)|_{\ol\eta})+
\rk_{\L}(\sF)\cdot\mathrm{sw}_{y}(\R\pi_{U!}(\L)|_{\ol\eta}). \nonumber
\end{eqnarray}
By the proper base change theorem \eqref{pbc}, we have
\begin{equation*}
  \R\pi_{U!}(\sF)|_{\ol\eta}\isora\R\G_c(U_{\ol\eta},\sF|_{U_{\ol\eta}})
  \ \ \ {\rm and}\ \ \   \R\pi_{U!}(\L)|_{\ol\eta}\isora\R\G_c(U_{\ol\eta},\L).
\end{equation*}
Then \eqref{llcc ns} follows.
\end{proof}

\section{The conductor formula}
\subsection{}\label{notation cul}
In this section, we take again the assumptions of \ref{notllcc} and \ref{sheaf setting not} and we will take the notation introduced in $\S 7$. Let $R$ be the conductor of $\sF$ \eqref{sheaf bound} that we assume having integral coefficients of support in $S$. We assume also that $\sF$ is isoclinic and clean along $D$ (\ref{isoclinic} and \ref{clean isocline sheaf}). Notice that if $R=0$, a sheaf $\sF$ is tamely ramified along $D$ and is automatically isoclinic and clean.

\begin{theorem}\label{keytheorem llcc lchern}
Let $\sF$ be a sheaf on $U$ as in {\rm \ref{notation cul}}.
Assume that $T\cap S=\emptyset$ or that $\rk_{\L}(\sF)=1$. Then, we have \eqref{locchern}
\begin{eqnarray*}
  &&C_{S}(j_!(\sF))-\rk_{\L}(\sF)\cdot C_{S}(j_!(\L_U))\\
  &=&(-1)^{d}\rk_{\L}(\sF)\cdot c_d\left(\OX(\log D)\ot_{\sO_X}\sO_X(R)-\OX(\log D)\right)^X_{S}\cap[X]\in \h^0_{S}(X,\sK_X),\nonumber
\end{eqnarray*}
where the right hand side is considered as an element of $\h^0_{S}(X,\sK_X)$ by the cycle map \eqref{cycle map cl}.
\end{theorem}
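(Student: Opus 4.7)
The strategy is to follow Tsushima's approach from the rank-one case \cite{ts2} and to lift the construction of the class $\alpha_0(j_!(\sF))\in\h^0_{S^\dagger}(\xxd,\ol\sH^\dagger\ot^L\sL^\dagger)$ from \ref{injkeylemma} to the dilated self-product $\xxr$. Using the canonical morphisms $\xxr\to\xxlp\to\xxd$, the fact that the conductor $R$ bounds the ramification of $\sF$ (see \ref{sheaf bound}), and the vanishing from cleanness in Proposition \ref{saito cc3.4}, the class $\alpha_0(j_!(\sF))$ should admit a natural lift to a cohomology class supported on $E^{(R)}=\xxr\ti_XR$. By \eqref{vectorbundle xxr}, $E^{(R)}$ is canonically the vector bundle $\mathbf{V}(\OX(\log D)\ot_{\sO_X}\sO_X(R))\ti_XR$ over $R$, which is precisely where the refined Swan conductor of $\sF$ takes values.

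Next, I would compute the pullback of this lift along $\d^{(R)}:X\to\xxr$ followed by the evaluation construction of \eqref{ev'}. Taking the difference $C_S(j_!(\sF))-\rk_\L(\sF)\cdot C_S(j_!(\L_U))$ subtracts off the contribution of the trivial sheaf, which corresponds to the zero section of $\OX(\log D)$; what remains is the datum of two sections of distinct vector bundles over $X$ that coincide outside $S$. This is exactly what the localized Chern class construction of \ref{locchern} evaluates, applied to the two-term complex $[\OX(\log D)\ot_{\sO_X}\sO_X(R)\to\OX(\log D)]$, which is an isomorphism on $X-S$. The comparison between the cycle map and the refined Gysin homomorphism \eqref{up*gysin!} then transports the resulting class to $\h^0_S(X,\sK_X)$, giving the right-hand side of the theorem; the sign $(-1)^d$ arises from the Tate twist and shift $(d)[2d]$ in the definition of $\wt\sH$ in \ref{sheaf setting not}.

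The main obstacle is passing from rank one to higher rank. For $\rk_\L(\sF)=1$, Tsushima's argument adapts essentially verbatim. For higher rank, I would use the isoclinic and clean assumption, combined with the central character decomposition of the Galois representation at each wild generic point (\ref{slope center decomp cf}, \ref{charcycle}), and with the Fourier-Deligne additivity results of Propositions \ref{proof additive rc} and \ref{ds inclusion}, to reduce the computation on $E^{(R)}$ to a sum over central characters $\chi$ whose individual contributions are read off from the refined Swan conductor lines $\ol L_\chi$ appearing in \eqref{CCi}. The assumption $T\cap S=\emptyset$ enters precisely here: when the wild components are disjoint from the tame components, the blow-ups defining $\xxd$ do not interact with the dilatation defining $\xxr$ along $S$, so the lift from $\xxd$ to $\xxr$ produced in the first step is compatible with the character decomposition. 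When $T$ meets $S$ at codimension-two points of $D$, the tame and wild factors become entangled in the $\dagger$-construction and the character-by-character reduction fails, which is why one of the two assumptions is imposed.
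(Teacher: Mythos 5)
Your proposal correctly identifies the overall geography (dilatation $\xxr$, the class map $\d^{(R)}$, the evaluation and the localized Chern class formalism), but it goes off the rails in the two places that actually matter.

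First, the proposed reduction of the higher-rank case to a \emph{sum over central characters $\chi$} is not what the paper does, and I don't see how it could work. The refined characteristic class $C_S(j_!\sF)$ is not built by linearizing over the central character decomposition of $\sF|_{\eta_i}$; that decomposition is only used in the \emph{definition} of the cycle $CC^*(\sF)$ in \ref{charcycle}, and the identification of $CC^*(\sF)$ with a localized Chern class expression (used later in \ref{simp cc} and \ref{pre-cond form}) is an independent fact quoted from Saito. In the paper, the higher-rank case is handled by an argument that does not decompose $\sF$ at all: the key input is Proposition \ref{id=e cup x} (Saito's \cite{saito cc} 3.3--3.4), namely $f^{(R)*}(\id_{j_!\sF})=e\cup[X]$ in $\h^0_{\xr}(\xxr,\sHr)$, where $e$ is the canonical lift of $\id_{\sF}$. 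This holds for any rank, with cleanliness entering via the vanishing in Proposition \ref{saito cc3.4}. The factor $\rk_\L(\sF)$ then appears \emph{only} because the evaluation (trace) map sends $e$ to $\mathrm{Tr}(\id_{\sF})=\rk_\L(\sF)$ (the commutative square \eqref{right down square}). That is a Lefschetz-type observation, not a character-by-character computation.

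Second, the role of the hypothesis $T\cap S=\emptyset$ is misdiagnosed. It is not about compatibility of the lift with any character decomposition; it is about the injectivity of the map $\lambda:\h^0_{\sr}(\xxr,\sHr\ot^L\sL^{(R)})\to\h^0_{\xr}(\xxr,\sHr\ot^L\sL^{(R)})$ in \eqref{lambda focus}. When $T\cap S=\emptyset$ one has $T^{(R)}\cap S^{(R)}=\emptyset$, hence $\h^0_{D^{(R)}}=\h^0_{S^{(R)}}\oplus\h^0_{T^{(R)}}$ (Proposition \ref{intersection empty}), and the map from the $S^{(R)}$-summand is manifestly injective; in rank one this injectivity is instead Tsushima's Proposition \ref{lambda rank 1 iso}. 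Without this injectivity one cannot descend the class from $\h^0_{\xr}$ to $\h^0_{\sr}$, which is the crucial step in Proposition \ref{theorem need 1}. Finally, your claim that the localized Chern class "evaluates the datum of two sections coinciding off $S$" gestures at the right answer but skips the actual identification: one needs the explicit cycle computation $\tau=\lambda_0^{-1}([X]_a)$ with $\d^{(R)!}(\tau)=(-1)^d c_d(\O^1_{X/k}(\log D)\ot\sO_X(R)-\O^1_{X/k}(\log T))^X_S\cap[X]$, which is Lemma \ref{ts1 3.7} (relying on \cite{ksann} 3.4.9 and the refined Gysin compatibility \eqref{up*gysin!}). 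As stated, your argument asserts rather than proves this, and with the tame part $\O^1_{X/k}(\log T)$ appearing first before the final subtraction gives $\O^1_{X/k}(\log D)$.
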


The theorem will be proved in \ref{proof keytheorem} after some preliminaries. We will deduce from it the theorem \ref{cond form intro} in \ref{proof cond form}. The case where $\rk_{\L}(\sF)=1$ is due to Tsushima (\cite{ts2} 5.9).

\begin{remark}
 Although we follow the same lines as \cite{ts1} for sheaves of higher ranks, the situation is technically more involved.
 The assumption $S\cap T=\emptyset$ is required for the injectivity of a map $\lambda$ defined in \eqref{lambda focus}, which is a crucial step in my proof (cf. \ref{theorem need 1}). We don't know if it holds without this assumption.
\end{remark}

\subsection{}
We consider $\xxlp$ as an $X$-scheme by the second projection, and we denote by $\xxr$ the dilatation of $\xxlp$ along $\wt\d$ of thickening $R$ \eqref{dilatation}. We have a Cartesian diagram \eqref{xxrcar}
\begin{equation*}
  \xymatrix{\relax
  U\ar@{}[rd]|{\Box}\ar[r]^-{\d_U}\ar[d]_j&\uu\ar[d]^{j^{(R)}}\\
  X\ar[r]^-{\d^{(R)}}&\xxr}
\end{equation*}
We denote by $\fr:\xxr\ra\xx$ and $\varphi^{(R)}:\xxr\ra\xxd$ the canonical projections. We put $X^{(R)}=f^{(R)-1}(\d(X))$ and $S^{(R)}=f^{(R)-1}(\d(S))$.

We put
\begin{equation*}
\sHr=\jr_*(\sH_0)(d)[2d]
\end{equation*}
on $\xxr$. Notice that $\sHr|_{\vvlp}=\wt\sH_V$ (\ref{some sHv}).

\begin{proposition}[\cite{saito cc} Corollary 3.3]\label{jussmapR}
There exists a unique homomorphism {\rm(\ref{sheaf setting not})}
\begin{equation}\label{fR}
  f^{(R)*}(\ol\sH)\ra\sHr
\end{equation}
extending the identity of $\sH$ on $\uu$.
\end{proposition}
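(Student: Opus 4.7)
The proof plan rests on recognizing that the statement is essentially formal, following from adjunction for the open immersion $\jr:\uu\ra\xxr$ together with the definition $\sHr=\jr_*\sH_0(d)[2d]$.

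First, I would invoke the standard adjunction $\jr^*\dashv\jr_*$ in the derived category to obtain the bijection
\begin{equation*}
\Hom_{D(\xxr,\L)}(f^{(R)*}\ol\sH,\sHr)\;=\;\Hom_{D(\xxr,\L)}(f^{(R)*}\ol\sH,\jr_*\sH_0(d)[2d])\;\xra{\sim}\;\Hom_{D(\uu,\L)}(\jr^*f^{(R)*}\ol\sH,\sH_0(d)[2d]).
\end{equation*}
Note this already gives uniqueness for free, so the entire content of the proposition is the canonical identification of the source $\jr^*f^{(R)*}\ol\sH$ with $\sH_0(d)[2d]$, and then picking out the identity.

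Next I would verify that the composition $f^{(R)}\circ \jr:\uu\ra\xxr\ra\xx$ coincides with the natural open immersion $\uu\hookrightarrow\xx$. This uses the construction of $\xxr$ as the dilatation of $\xxlp$ along $\wt\d$ of thickening $R$ (see \ref{xxr}): this dilatation is an isomorphism over the open complement of $\wt\d(X)$, and the open subscheme $\uu\subset\xxlp$ (image of $\wt j$) lies in this locus, so $\jr$ identifies $\uu$ with an open subscheme of $\xxr$ on which $f^{(R)}$ restricts to the natural map to $\xx$. This is also compatible with the Cartesian diagram \eqref{xxrcar}. Since $\ol\sH=\R\shom(\pr_2^*j_!\sF,\R\pr_1^!j_!\sF)$ and $(j_!\sF)|_U=\sF$, the restriction of $\ol\sH$ along the open immersion $\uu\hookrightarrow\xx$ is canonically $\sH=\sH_0(d)[2d]$.

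Combining these two steps, the adjunction bijection becomes $\Hom_{D(\xxr,\L)}(f^{(R)*}\ol\sH,\sHr)\cong\End_{D(\uu,\L)}(\sH_0(d)[2d])$. The homomorphism corresponding to $\id_{\sH_0(d)[2d]}$ is, by construction, the unique map whose restriction to $\uu$ is the identity of $\sH$, completing both the existence and uniqueness assertions. The only step requiring care is the bookkeeping for the first paragraph's identification $f^{(R)}\circ\jr=(\uu\hookrightarrow\xx)$; once this is established, cleanliness and isoclinicity play no role in this particular proposition (they will of course re-enter when one uses the resulting map to set up the intersection-theoretic computation of $C_S(j_!\sF)$).
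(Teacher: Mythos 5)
The key step of your argument — identifying $\Hom_{D(\xxr,\L)}(f^{(R)*}\ol\sH,\sHr)$ with $\Hom_{D(\uu,\L)}(\jr^*f^{(R)*}\ol\sH,\sH_0(d)[2d])$ by ``the standard adjunction $\jr^*\dashv\jr_*$ in the derived category'' — does not hold, because $\sHr=\jr_*\sH_0(d)[2d]$ is built from the \emph{underived} pushforward $\jr_*\sH_0$, not $\R\jr_*\sH_0$. The derived left adjoint of $\jr^*$ is $\R\jr_*$, and the two differ here. You can see from the paper itself that the distinction matters: in the proof of \ref{saito cc3.4}, the formula $\R^q i^!(j^{(R)}_*\sH)\cong i^*\R^{q-1}j^{(R)}_{V*}(\wt\nu_*(\sH))$ for $q\ddy 2$ gives nonzero groups, whereas $\R i^!\R j^{(R)}_*(\sH)=0$ identically since $i$ and $\jr_V$ are complementary closed and open immersions; so $j^{(R)}_*\sH_0\neq\R j^{(R)}_*\sH_0$ in this setting. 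Thus your first displayed bijection is not a formal consequence of adjunction, and the rest of the plan, which relies on reading off existence and uniqueness from this bijection, collapses.

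What the adjunction unit actually gives is a canonical map $f^{(R)*}\ol\sH\ra\R\jr_*\jr^*f^{(R)*}\ol\sH=\R\jr_*\sH$. Since $\ol\sH\in D^{\ddy -2d}$ and $\sHr=\tau_{\xdy -2d}(\R\jr_*\sH)$, the proposition is really asserting that this unit factors, and factors uniquely, through the truncation $\sHr\ra\R\jr_*\sH$. Neither the existence of the factorization nor its uniqueness is formal: the relevant obstruction lives in groups of the shape $\Hom_D(\tau_{>-2d}(f^{(R)*}\ol\sH),\sHr)$ and its shift, and these are global $\Ext$-type groups that do not vanish for degree reasons alone (global sections can raise cohomological degree). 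This is precisely where the structural hypotheses of \ref{sheaf setting not} and \ref{notation cul}, and Saito's actual argument in Corollary 3.3 of \cite{saito cc}, come in. Your plan correctly handles the easy bookkeeping ($f^{(R)}\circ\jr$ agreeing with $\uu\hookrightarrow\xx$, and $\jr^*f^{(R)*}\ol\sH\cong\sH$), but it mistakes the nontrivial content of the proposition — the factorization through the truncation — for a formal consequence of adjunction.
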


\subsection{}
We deduce from \eqref{fR} by pull-back a map
\begin{equation}\label{fRupstar}
  \h^0_X(\xx,\ol\sH)\ra\h^0_{\xr}(\xxr,\sHr).
\end{equation}
We put (\ref{sLdagger'})
\begin{equation}\label{abbr Rg}
 \sL^{(R)}=\varphi^{(R)*}(\sL^{\dagger}).
\end{equation}
The canonical map $\L\ra\sL^{(R)}$ induces a map
\begin{equation}\label{HR ot adjunction}
  \h^0_{\xr}(\xxr,\sHr)\ra  \h^0_{\xr}(\xxr,\sHr\ot^L\sL^{(R)}).
\end{equation}
The canonical injection $\sr\ra\xr$ induces a map
\begin{equation}\label{lambda focus}
\lambda: \h^0_{\sr}(\xxr,\sHr\ot^L\sL^{(R)})\ra \h^0_{\xr}(\xxr,\sHr\ot^L\sL^{(R)}).
\end{equation}

\subsection{}
We denote by $\wt V=\xr-\sr$ the complementary open subscheme of $\sr$ in $\xr$,
by $\wt\i_V:\wt V\ra \vvlp$, $\wt \gamma_V:(\vvlp)\backslash \wt V\ra\vvlp$ and $\gamma_U:(\uu)\backslash \d_U(U)\ra \uu$ the canonical injections. We put $D^{(R)}=f^{(R)-1}(\d(D))$ and $T^{(R)}=f^{(R)-1}(\d(T))$. Notice that $T^{(R)}\cup S^{(R)}=D^{(R)}$ and that $\d_U(U)$ is the complementary open subscheme of $D^{(R)}$ in $\xr$. We put
\begin{equation*}
  \wt\sL_V=\R\wt\g_{V*}(\L).
\end{equation*}
Notice that $\wt\sL_V\isora\sL^{\dagger}|_{\vvlp}=\sL^{(R)}|_{\vvlp}$ \eqref{sLdagger'} and \eqref{abbr Rg}.

\begin{proposition}[\cite{ts1}  2.2]\label{lambda rank 1 iso}
If the sheaf $\sF$ on $U$ has rank $1$, the map $\lambda$ \eqref{lambda focus} is an isomorphism.
\end{proposition}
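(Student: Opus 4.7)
The plan is to reinterpret $\lambda$ via the excision long exact sequence attached to the closed pair $S^{(R)} \subseteq X^{(R)} \subseteq \xxr$. With $\wt V = X^{(R)} \setminus S^{(R)}$ (the notation preceding the statement), this reads
\begin{equation*}
\cdots \ra \h^{-1}_{\wt V}\bigl(\xxr \setminus S^{(R)},\, \sHr \ot^L \sL^{(R)}\bigr) \ra \h^0_{S^{(R)}}\bigl(\xxr,\, \sHr \ot^L \sL^{(R)}\bigr) \xra{\lambda} \h^0_{X^{(R)}}\bigl(\xxr,\, \sHr \ot^L \sL^{(R)}\bigr) \ra \h^0_{\wt V}\bigl(\xxr \setminus S^{(R)},\, \sHr \ot^L \sL^{(R)}\bigr) \ra \cdots,
\end{equation*}
so $\lambda$ is an isomorphism if and only if both flanking local cohomology groups vanish. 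Since $S \cap V = \emptyset$ and the projection $\vvlp \ra \xx$ factors through $\vv$, we have $\vvlp \cap S^{(R)} = \emptyset$; thus $\vvlp$ is an open neighborhood of $\wt V$ inside $\xxr \setminus S^{(R)}$, and by excision the relevant local cohomology groups are computed on $\vvlp$, where $\sHr|_{\vvlp} = \wt\sH_V$ and $\sL^{(R)}|_{\vvlp} = \wt\sL_V = \R\wt\g_{V*}\L$.

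The next reduction is to a local constancy statement. For any complex $\mathcal{G}$ on an open neighborhood of $\wt V$ in $\vvlp$ whose cohomology sheaves are locally constant, (\ref{acyclic}) gives an isomorphism $\mathcal{G} \ot^L \wt\sL_V \isora \R\wt\g_{V*}\wt\g_V^*\mathcal{G}$, and the standard vanishing $\R\wt\i_V^! \R\wt\g_{V*} = 0$ for complementary closed and open immersions then yields $\R\wt\i_V^!(\mathcal{G} \ot^L \wt\sL_V) = 0$; a fortiori, all local cohomology groups $\h^i_{\wt V}(\vvlp, \mathcal{G} \ot^L \wt\sL_V)$ vanish. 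Hence it suffices to prove that $\wt\sH_V$ has locally constant cohomology sheaves on some open neighborhood of $\wt V$ in $\vvlp$.

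When $\rk_\L(\sF) = 1$, this local constancy can be established as follows. The sheaf $\sH_0 = \pr_1^*\sF \ot \pr_2^*\sF^\vee$ is itself a rank-one local system on $\uu$, and its fiber along the diagonal $\d_U(U)$ is canonically identified with $\L_U$ by $\id_\sF$. Combined with the tameness isomorphism \eqref{tame iso}, which extends this identification to a canonical trivialization of $\wt\d_V^*(\wt\nu_*\sH_0)$ over $V$, and with the smoothness of the projection $\xxr \ra X$ along the section $\d^{(R)}$ provided by the integrality of $R$ together with \eqref{vectorbundle xxr}, one obtains an identification of $\wt\sH_V = \R\wt\nu_*\sH_0(d)[2d]$ with a locally constant sheaf on an \'etale neighborhood of $\wt V$ in $\vvlp$. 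The crucial point is that the local characters of $\pr_1^*\sF$ and $\pr_2^*\sF^\vee$ along the tame boundary $T\cap V$ are mutually inverse in rank one, so their monodromy contributions to $\sH_0$ cancel along the diagonal.

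The main obstacle is precisely this cancellation of tame ramification along the diagonal: it relies essentially on $\sH_0$ being of rank one. In higher rank, the diagonal restriction $\d_U^*\sH_0 = \send(\sF)$ contains the identity merely as one distinguished section, and the local monodromy of $\sH_0$ across the diagonal no longer cancels, so $\wt\sH_V$ generally fails to be locally constant near $\wt V$. This is exactly why the present proposition is restricted to rank one, and why the higher-rank case treated in Theorem \ref{keytheorem llcc lchern} requires the stronger assumption $S \cap T = \emptyset$ to secure even injectivity of $\lambda$.
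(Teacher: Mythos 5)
Your overall skeleton matches the paper's: reduce the isomorphism statement for $\lambda$ to the vanishing of $\h^q_{\wt V}(\vvlp, \wt\sH_V \ot^L \wt\sL_V)$ for all $q$, deduce this from local constancy of the sheaf $\wt\nu_*\sH_0$ on $\vvlp$ together with $\R\wt\i_V^!\R\wt\g_{V*}=0$. The paper does precisely this in three lines. The crucial technical input, however --- that $\wt\nu_*\sH_0$ (note that the \emph{ordinary}, not the derived, pushforward appears in the definition of $\wt\sH$, so your $\R\wt\nu_*$ is a misstatement) is locally constant and constructible on $\vvlp$ when $\sF$ has rank $1$ and is tamely ramified along $T\cap V$ --- is simply asserted in your write-up, not proven. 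The paper cites a precise result of Abbes--Saito (\cite{as}\ 4.2.2.1) at exactly this point.

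Your heuristic of monodromy cancellation is the right phenomenon (the rank-$1$ character of $\pr_1^*\sF\ot\pr_2^*\sF^{\vee}$ trivializes along the exceptional locus $\wt D\cap\vvlp$ of the framed self-product), but the supporting reasoning you offer does not constitute a proof. In particular, the appeal to the ``smoothness of the projection $\xxr\ra X$'' and the integrality of $R$ is a red herring: the local constancy claim lives entirely on $\vvlp$, an open piece of the framed self-product $\xxlp$, and has nothing to do with the dilatation $\xxr$; the paper's proof never mentions $\xxr$ or integrality of $R$. Moreover, the tameness isomorphism \eqref{tame iso} only controls the restriction of $\wt\nu_*\sH_0$ to the diagonal $\wt\d_V(V)$, whereas local constancy must be checked across all of $\vvlp$ including the boundary components; that is precisely the content of the Abbes--Saito lemma and is where the rank-one and tameness hypotheses actually do their work.
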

\begin{proof}
 It is sufficient to show that, for any integer $q$, $\h^{q}_{\wt V}(\vvlp,\wt\sH_V\ot^L\wt\sL_V)=0$. Since $\sF$ is of rank $1$ and is tamely ramified along $T\cap V$ relatively to $V$, $\wt\nu_*\sH_0$ is a locally constant and constructible sheaf on $\vvlp$ (\cite{as} 4.2.2.1).
By (\cite{fu} 6.5.5), we have
 \begin{equation*}
  \wt\sH_V\ot^L\wt\sL_V\isora \R\wt\g_{V*}(\wt\g_V^*(\wt\sH_V)).
 \end{equation*}
Since $\R\wt\i^!\R\wt\g_*=0$ \eqref{low*up!}, for any integer $q$,
\begin{equation*}
\h^{q}_{\wt V}(\vvlp,\wt\sH_V\ot^L\wt\sL_V)=\h^{q}(\wt V,\R\wt\i^!\R\wt\g_{V*}(\wt\g_V^*(\wt\sH_V)))=0.
\end{equation*}
\end{proof}

\begin{proposition}\label{intersection empty}
If $T\cap S=\emptyset$, the map $\lambda$ \eqref{lambda focus} is injective.
\end{proposition}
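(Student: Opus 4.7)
The plan is to translate the injectivity of $\lambda$ into a local cohomology vanishing, and then exploit the assumption $T\cap S=\emptyset$ to separate the tame locus from the wild locus. From the long exact sequence of cohomology with supports for the closed pair $\sr\subset\xr$ inside $\xxr$, the kernel of $\lambda$ is the image of $\h^{-1}_{\wt V}(\xxr\setminus\sr,\sHr\ot^L\sL^{(R)})$, so it suffices to show this group vanishes. Since $\vvlp$ is an open subscheme of $\xxr$ disjoint from $\sr$ and containing $\wt V$ as a closed subscheme, and since $\sHr|_{\vvlp}=\wt\sH_V$ and $\sL^{(R)}|_{\vvlp}=\wt\sL_V$, excision reduces the problem to showing
\begin{equation*}
\h^{-1}_{\wt V}(\vvlp,\wt\sH_V\ot^L\wt\sL_V)=0.
\end{equation*}

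Thanks to $T\cap S=\emptyset$, one has $T\subset V$, and hence a decomposition $\wt V=\wt U\sqcup\wt T$, with $\wt U=\d_U(U)\subset\uu$ open in $\wt V$ and $\wt T=\wt\d_V(T)$ closed. The localization triangle for $\wt T\subset\wt V$ reduces the question to the two vanishings
\begin{eqnarray*}
\h^{-1}_{\wt U}(\vvlp\setminus\wt T,\wt\sH_V\ot^L\wt\sL_V)&=&0,\\
\h^{-1}_{\wt T}(\vvlp,\wt\sH_V\ot^L\wt\sL_V)&=&0.
\end{eqnarray*}
The first one follows from excision and \ref{acyclic}: the group equals $\h^{-1}_{\wt U}(\uu,\sH_0(d)[2d]\ot^L\R\g_{U*}\L)$, and since $\sH_0$ is locally constant on $\uu$, \ref{acyclic} yields $\R\iota^!(\sH_0(d)[2d]\ot^L\R\g_{U*}\L)=0$ for $\iota:\wt U\hookrightarrow\uu$, so in fact all local cohomologies supported on $\wt U$ vanish.

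The second vanishing is the main obstacle. The assumption $T\cap S=\emptyset$ is what allows one to work in a suitable open neighborhood $N$ of $\wt T$ in $\vvlp$ lying entirely over $T\times T\subset V\times V$, where only the tame components of the framed construction intervene. Using the tameness of $\sF$ along $T\cap V$ relatively to $V$, one invokes an extension of (\cite{as} 4.2.2.1) -- applied in the proof of \ref{lambda rank 1 iso} only for rank-one sheaves -- to conclude that $\wt\nu_*(\sH_0)$ is locally constant and constructible on $N$. A further application of \ref{acyclic} on $N$ then gives $\R\wt\iota_V^!(\wt\sH_V\ot^L\wt\sL_V)=0$ on $N$, whence the desired vanishing by excision. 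The difficulty is precisely in generalizing the rank-one pushforward result of op.~cit.~to higher rank: without $T\cap S=\emptyset$, the framed construction near a point of $T\cap S$ would mix tame and wild components, and one could not isolate the tame behavior needed to ensure local constancy of $\wt\nu_*(\sH_0)$ near $\wt T$.
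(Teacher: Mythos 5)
Your proposal identifies the right starting point but then stalls on a vanishing that the paper's argument shows to be unnecessary. You correctly set up the long exact sequence for $\sr\subset\xr$ in $\xxr$: the kernel of $\lambda$ is the image, under the connecting map, of $\h^{-1}_{\wt V}(\xxr\setminus\sr,\sHr\ot^L\sL^{(R)})$ with $\wt V=\xr\setminus\sr$. You also observe that, thanks to $T\cap S=\emptyset$, the set $\wt V$ decomposes into the open piece $\d_U(U)$ and the closed piece $T^{(R)}$ (this closed piece is $T^{(R)}$, not the smaller $\wt\d_V(T)$, but that does not change the shape of your argument). The $\d_U(U)$-vanishing you derive from \ref{acyclic} and the local constancy of $\sH_0$ is exactly what the paper also uses for the isomorphism $\h^0_{D^{(R)}}(\xxr,-)\isora\h^0_{\xr}(\xxr,-)$.

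The gap is your treatment of the $T^{(R)}$-part. You propose to prove $\h^{-1}_{T^{(R)}}(\vvlp,\wt\sH_V\ot^L\wt\sL_V)=0$ by invoking an \emph{unproven} extension of (\cite{as} 4.2.2.1) asserting that $\wt\nu_*(\sH_0)$ is locally constant near $T^{(R)}$ for arbitrary rank. That result is stated in op. cit. only in rank one, and its availability for higher rank is exactly what you cannot take for granted; indeed, the paper's careful restriction of \ref{lambda rank 1 iso} to rank one signals that it is not expected to extend.

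The paper's proof sidesteps this vanishing entirely, and this is the step you are missing. Since $T\cap S=\emptyset$ forces $T^{(R)}\cap S^{(R)}=\emptyset$, the pair $\{S^{(R)},T^{(R)}\}$ is a clopen decomposition of $D^{(R)}$, so for any $\sG$ one has
\begin{equation*}
\h^0_{D^{(R)}}(\xxr,\sG)=\h^0_{S^{(R)}}(\xxr,\sG)\oplus\h^0_{T^{(R)}}(\xxr,\sG).
\end{equation*}
Hence $\h^0_{\sr}\to\h^0_{D^{(R)}}$ is injective for purely formal reasons, and composing with the isomorphism $\h^0_{D^{(R)}}\isora\h^0_{\xr}$ gives the injectivity of $\lambda$. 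Rephrased in your own long-exact-sequence language: because $T^{(R)}$ is closed in all of $\xxr$ and disjoint from $\sr$, excision gives $\h^{-1}_{T^{(R)}}(\xxr\setminus\sr,-)\isora\h^{-1}_{T^{(R)}}(\xxr,-)$, which maps to $\h^{-1}_{\xr}(\xxr,-)$, and consecutive maps in the long exact sequence force its image under the connecting map to $\h^0_{\sr}(\xxr,-)$ to be zero. The $T^{(R)}$-supported cohomology need not vanish; its contribution to $\ker\lambda$ dies automatically. The true role of $T\cap S=\emptyset$ is to make $T^{(R)}$ closed in $\xxr$ and disjoint from $\sr$, not, as you suggest, to enable a higher-rank local constancy statement.
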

\begin{proof}
Since $\sH_0$ is locally constant, by (\cite{fu} 6.5.5), for any integer $q$,
\begin{equation*}
\h^q_U(\uu,\sH\ot^L\R\g_{U*}(\L))\isora \h^q_U(\uu,\R\gamma_{U*}\gamma^*_{U}(\sH)) \isora\h^q(U,\R\d^!_U\R\gamma_{U*}\gamma^*_{U}(\sH))=0.
\end{equation*}
Hence, we have a canonical isomorphism
\begin{equation*}
  \h^0_{D^{(R)}}(\xxr,\sHr\ot^L\sL^{(R)})\isora  \h^0_{X^{(R)}}(\xxr,\sHr\ot^L\sL^{(R)}).
\end{equation*}
Since $T\cap S=\emptyset$, we have $T^{(R)}\cap S^{(R)}=\emptyset$. Hence, for any object $\sG$ of $\dctf(\xxr, \L)$,
\begin{equation*}
 \h^0_{D^{(R)}}(\xxr,\sG)=\h^0_{S^{(R)}}(\xxr,\sG)\oplus\h^0_{ T^{(R)}}(\xxr,\sG).
\end{equation*}
In particular, the canonical map
\begin{equation*}
  \h^0_{S^{(R)}}(\xxr,\sHr\ot^L\sL^{(R)})\ra \h^0_{D^{(R)}}(\xxr,\sHr\ot^L\sL^{(R)})
\end{equation*}
is injective. Hence $\lambda$ is injective.
\end{proof}

\subsection{}
The pull-back by $\d^{(R)}$ gives a map
\begin{equation*}
  \h^0_{\sr}(\xxr,\sHr\ot^L\sL^{(R)})\ra \h^0_S(X,\d^{(R)*}(\sHr)\ot^L\d^{\dagger*}(\sL^{\dagger})).
\end{equation*}
Since the conductor of $\sF$ is $R$ and $\sF$ is isoclinic and clean along $D$ (\ref{notation cul}), the ramification of $\sF$ along $D$ is bounded by $R+$ (\ref{clean isocline sheaf}). Hence, we have an isomorphism \eqref{sheaf bound}
\begin{equation*}
\d^{(R)*}(j^{(R)}_*(\sH_0))\isora j_*(\d_U^*(\sH_0)).
\end{equation*}
We have an evaluation map
 \begin{equation*}
\d^{(R)*}(\sHr)\isora j_*(\d^*_U(\sH_0))(d)[2d]=j_*(\send(\sF))(d)[2d]\ra (j_*(\L_U))(d)[2d]=\L_X(d)[2d]=\sK_X,
\end{equation*}
where the third arrow is the push-forward of the trace map $\mathrm{Tr}:\send(\sF)\ra \L_U$. It induces a map
\begin{equation*}
  \ev^{(R)}:\h^0_S(X,\d^{(R)*}(\sHr)\ot^L\d^{\dagger*}(\sL^{\dagger}))
  \ra\h^0_S(X,\sK_X\ot^L\d^{\dagger*}(\sL^{\dagger}))\isora\h^0_S(X,\sK_X),
\end{equation*}
where second arrow is the inverse of the isomorphism \eqref{isoadjdg}.

\subsection{}
By \eqref{locdiag}, we have a map
\begin{equation*}
  \h^0(X,\d^{(R)*}j^{(R)}_*(\sH_0))\ti\h^0_S(X,\d^{\dagger*}(\sL^{\dagger})(d)[2d])\xra{\cup_S}
  \h^0_S(X,\d^{(R)*}(\sHr)\ot^L\d^{\dagger*}(\sL^{\dagger})).
\end{equation*}
In the following of this section, we denote by $e\in \h^0(X,\d^{(R)*}j^{(R)}_*(\sH_0))$ the unique pre-image of
$\id_{\sF}\in \End(\sF)=\h^0(X,j_*\d^*(\sH_0))$. The following diagram is commutative
\begin{equation}\label{right down square}
  \xymatrix{\relax
  \h^0_S(X,\d^{(R)*}(\sHr)\ot^L\d^{\dagger*}(\sL^{\dagger}))\ar[r]^-{\ev^{(R)}}&
  \h^0_S(X,\sK_X)\ar[d]^{\eqref{isoadjdg}}\\
  \h^0_S(X,\d^{\dagger *}(\sL^{\dagger})(d)[2d])\ar[u]_{e\cup_S-}\ar[r]^-{\cdot\rk_{\L}(\sF)}& \h^0_S(X,\sK_X\ot^L\d^{\dagger*}(\sL^{\dagger}))}
\end{equation}
since the composition of the following morphisms
\begin{equation*}
\h^0(X,\d^{(R)*}j^{(R)}_*(\sH_0))\ra \h^0(X,j_*\d_U^*(\sH_0))\xra{\rm{ev}} \h^0(X,\L)
\end{equation*}
maps $e\in \h^0(X,\d^{(R)*}j^{(R)}_*(\sH_0))$ to $\rk_{\L}(\sF)\in \L=\h^0(X,\L)$.

\subsection{}
We have a commutative diagram with Cartesian squares (\ref{notllcc})
\begin{equation}\label{diagR}
  \xymatrix{\relax
  \vvlp\ar@{}[rd]|{\Box}\ar[d]_-{\jr_V}\ar@{=}[r]&\vvlp\ar@{}[rrrd]|{\Box}
  \ar[d]\ar[r]^-{\varphi_2}&V\rtimes_kV\ar[r]^-{\varphi_1}&\vvd\ar[r]^-{j^{\dagger}_2}&
  (V\ti_kX)^{\dagger}\ar[d]^{j^{\dagger}_1}\\
  \xxr\ar@/_5mm/[rrrr]_{\varphi^{(R)}}\ar[r]&\xxlp\ar[rrr]^-{\varphi}& & & \xxd}
\end{equation}
 The base change maps give by composition the following isomorphism
\begin{equation*}
  \varphi^{(R)*}(\ol\sH^{\dagger})=\varphi^{(R)*}j^{\dagger}_{1!}\R j^{\dagger}_{2*}(\ol\sH^{\dagger}_V)\isora \jr_{V!}(j^{\dagger}_2\circ\varphi_1\circ\varphi_2)^*\R j^{\dagger}_{2*}(\ol\sH_V^{\dagger})\isora\jr_{V!}\wt\sH_V,
\end{equation*}
which induces a map
\begin{equation}\label{xxptoxxr}
\varphi^{(R)*}(\ol\sH^{\dagger})\isora \jr_{V !}(\wt\sH_V)\ra \jr_{V *}(\wt\nu_*(\sH_0))(d)[2d]=\sHr.
\end{equation}
By (\ref{jussmapR}), the composed map
\begin{equation*}
f^{(R)*}(\ol\sH)=\varphi^{(R)*}(f^*(\ol\sH))\xra{\eqref{defllcckey}} \varphi^{(R)*}(\ol\sH^{\dagger})\xra{\eqref{xxptoxxr}}\sHr
\end{equation*}
is equal to \eqref{fR}. We deduce by pull-back a commutative diagram
\begin{equation}\label{triangle cf}
\xymatrix{\relax
\h^0_X(\xx,\ol\sH)\ar[r]^-{\eqref{fupstar}}\ar[rd]_{\eqref{fRupstar}}&\h^0_{X^{\dagger}}((\xx)^{\dagger},\ol\sH^{\dagger})\ar[d]\\
&\h^0_{X^{(R)}}(\xxr,\sHr)}
\end{equation}

\subsection{}
We have the following diagrams with commutative squares
\begin{equation}\label{maindiagram3row}
  \xymatrix{\relax
  \h^0_{X^{\dagger}}((\xx)^{\dagger},\ol\sH^{\dagger})\ar[d]\ar[r]^-{\eqref{adjg'}}&
  \h^0_{X^{\dagger}}((\xx)^{\dagger},\ol\sH^{\dagger}\ot^L\sL^{\dagger})\ar[d]\\
  \h^0_{\xr}(\xxr, \sHr)\ar[r]^-{\eqref{HR ot adjunction}}&\h^0_{\xr}(\xxr, \sHr\ot^L\sL^{(R)})\\
  \h^{0}_X(\xxr,\L(d)[2d])\ar[u]^{e\cup-}\ar[r]^-{\theta}&
  \h^{0}_X(\xxr,\sL^{(R)}(d)[2d])\ar[u]_{e\cup-}}
\end{equation}
\begin{equation}\label{maindiagram3row2}
  \xymatrix{\relax
  \h^0_{X^{\dagger}}((\xx)^{\dagger},\ol\sH^{\dagger}\ot^L\sL^{\dagger})\ar[d]&  \h^0_{S^{\dagger}}((\xx)^{\dagger},\ol\sH^{\dagger}\ot^L\sL^{\dagger})     \ar@{_(->}[l]_-{\eqref{injkey}}\ar[d] \\
  \h^0_{\xr}(\xxr, \sHr\ot^L\sL^{(R)})\ar@{}[rd]|{(1)}&     \h^0_{\sr}(\xxr, \sHr\ot^L\sL^{(R)})     \ar[l]_-{\lambda}   \\
  \h^{0}_X(\xxr,\sL^{(R)}(d)[2d])\ar[u]_{e\cup-}&   \h^{0}_S(\xxr,\sL^{(R)}(d)[2d])\ar[u]^{e\cup_S-}  \ar[l]_-{\lambda_0}  \\     }
\end{equation}
\begin{equation}\label{maindiagram3row3}
  \xymatrix{\relax
 \h^0_{S^{\dagger}}((\xx)^{\dagger},\ol\sH^{\dagger}\ot^L\sL^{\dagger})\ar[d]\ar[r]^-{\eqref{pullback d'}}&\h^0_S(X,\d^{\dagger*}(\ol\sH^{\dagger})\ot^L\d^{\dagger*}(\sL^{\dagger}))\ar[r]^-{\eqref{ev'}}\ar[d] &\h^0_S(X,\sK_X)\ar@{=}[d] \\
  \h^0_{\sr}(\xxr, \sHr\ot^L\sL^{(R)})\ar[r]&
  \h^0_S(X,\d^{(R)*}(\sHr)\ot^L\d^{\dagger*}(\sL^{\dagger}))\ar[r]^-{\ev^{(R)}}\ar@{}[rd]|{\eqref{right down square}} &\h^0_S(X,\sK_X)\ar@{=}[d]  \\
  \h^{0}_S(\xxr,\sL^{(R)}(d)[2d])\ar[u]^{e\cup_S-}\ar[r]&
  \h^{0}_S(X,\d^{\dagger*}(\sL^{\dagger})(d)[2d])\ar[r]^-{\cdot\rk_{\L}(\sF)}\ar[u]_{e\cup_S-} & \h^0_S(X,\sK_X)}
\end{equation}
where
\begin{itemize}
  \item[i.]
  The arrows from the upper row to the middle row are the pull-backs by $\varphi^{(R)}$;

  \item[ii.]
  The arrows $e\cup-$ are the cup products \eqref{cap prod} and the arrows $e\cup_S-$ are given in \eqref{locdiag};

  \item[iii.]
  The arrows $\theta$ are induced by the canonical map $\L\ra \sL^{(R)}$;

  \item[iv.]
  The arrows $\lambda_0$ is the canonical map induced by the injection $S\ra X$;

\item[v.]
The square $(1)$ is commutative by \eqref{locdiag1};

\item[vi.]
  The arrows under \eqref{pullback d'} are the pull-back by $\d^{(R)}$;

\item[vii.]
We use the canonical isomorphism $\h^0_S(X,\sK_X)\isora\h^0_S(X,\sK_X\ot^L\d^{\dagger*}(\sL^{\dagger}))$ (cf. \ref{isoadjunction}).
\end{itemize}

\begin{lemma}\label{iso no h}
For any integer $q$, the canonical maps
\begin{equation*}
H^q_{S}(\xxr,\sL^{(R)}(d))\ra H^q_{X}(\xxr,\sL^{(R)}(d)),
\end{equation*}
are isomorphisms. In particular, the map $\lambda_0$ in \eqref{maindiagram3row2} is an isomorphism.
\end{lemma}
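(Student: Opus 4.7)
The plan is to prove the stronger vanishing $\R i^!(\sL^{(R)}(d)) = 0$, where $i : V \ra \xxr$ denotes the composition of the open immersion $V = X - S \ira X$ with $\d^{(R)} : X \ra \xxr$. Granting this, the localization triangle for the decomposition of $\d^{(R)}(X) \subset \xxr$ into the closed $\d^{(R)}(S)$ and the open $\d^{(R)}(V)$ yields for each $q$ a long exact sequence
\begin{equation*}
\cdots \ra \h^q_S(\xxr, \sL^{(R)}(d)) \ra \h^q_X(\xxr, \sL^{(R)}(d)) \ra \h^q(V, \R i^!(\sL^{(R)}(d))) \ra \cdots,
\end{equation*}
whose third term vanishes and the lemma follows; the assertion that $\lambda_0$ in \eqref{maindiagram3row2} is an isomorphism is the special case $q = 2d$.

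To reduce the vanishing to a local form, I factor $i$ as $V \xra{\wt\d_V} \vvlp \xra{\iota} \xxr$, where $\wt\d_V$ is the canonical diagonal lift \eqref{dvdwtdv} and $\iota$ is an open immersion. This factorization exists because the thickening $R$ is supported in $S$, so $R|_V = 0$ and the dilatation underlying $\xxr \ra \xxlp$ is trivial over $V$ in the second factor; consequently the base change $\xxr \ti_X V = \vvlp$ exhibits $\vvlp$ as an open subscheme of $\xxr$ containing $\wt\d_V(V)$. Since $\iota$ is an open immersion, $\R i^! = \R \wt\d_V^! \circ \iota^*$, and the problem reduces to the vanishing $\R \wt\d_V^!(\sL^{(R)}|_{\vvlp}(d)) = 0$.

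The restriction $\sL^{(R)}|_{\vvlp}$ can then be computed explicitly. Because $D_i \cap V = \emptyset$ for every $i \in I_{\rw}$, none of the centers $D_i \ti_k D_i$ producing $\xxlb$ from $\xxd$ meets $\vv$; hence, by the definitions in \ref{xstarx} and the factorizations in \eqref{diag5ti2}--\eqref{diagR}, the scheme $\vvlp$ is an open subscheme of $\vvd$, and the map $\varphi^{(R)}|_{\vvlp} : \vvlp \ra \xxd$ factors as the composition of open immersions $\vvlp \ira \vvd \ira \xxd$. Therefore
\begin{equation*}
\sL^{(R)}|_{\vvlp} = \bigl(\R \g^{\dagger}_*(\L)\bigr)\big|_{\vvlp} \iso \R \g_{\vvlp *}(\L),
\end{equation*}
where $\g_{\vvlp} : \vvlp - \wt\d_V(V) \ra \vvlp$ is the open complement of $\wt\d_V$. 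The last identification uses that the preimage of $X^{\dagger} \cap \vvd = \d^{\dagger}_V(V)$ along the open immersion $\vvlp \ira \vvd$ is $\wt\d_V(V)$ (by uniqueness of diagonal lifts), together with the trivial commutation of $\R \g^{\dagger}_*$ with restriction to an open subscheme.

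The proof concludes via the standard vanishing $\R \wt\d_V^! \circ \R \g_{\vvlp *} = 0$ of \eqref{low*up!}, applicable because $\wt\d_V$ and $\g_{\vvlp}$ are complementary closed and open immersions in $\vvlp$. Hence $\R \wt\d_V^!(\sL^{(R)}|_{\vvlp}(d)) = 0$, which finishes the argument. The main step requiring attention is the identification of $\vvlp$ as an open subscheme of $\vvd$, which delicately uses that $R$ is supported in $S$ and that $S$ collects the wild components of $D$; once that is in hand, the remaining manipulations are formal six-functor yoga.
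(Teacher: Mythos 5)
Your overall strategy matches the paper's: reduce to the vanishing $\R\wt\d_V^!(\sL^{(R)}|_{\vvlp}(d)) = 0$ via the localization triangle, then realize $\sL^{(R)}|_{\vvlp}$ as a pushforward from an open subscheme of $\vvlp$ disjoint from the diagonal section, and conclude with the standard vanishing $\R i^! \R j_* = 0$ for complementary closed and open immersions. The factorization of $i$ through the open subscheme $\vvlp \subset \xxr$ (which exists because $R|_V = 0$, so the dilatation is trivial over $V$) is correct, as is the factorization of $\varphi^{(R)}|_{\vvlp}$ through the open immersions $\vvlp \ira \vvd \ira \xxd$.

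There is, however, an error in the identification of the relevant open complement. You claim $X^{\dagger} \cap \vvd = \d^{\dagger}_V(V)$ and consequently that $\sL^{(R)}|_{\vvlp}$ is pushed forward from $\vvlp - \wt\d_V(V)$. But $X^{\dagger}$ is defined as the \emph{full} preimage $f^{-1}(\d(X))$ (see \ref{sLdagger}), not the proper transform $\d^{\dagger}(X)$. When $T\cap V\neq\emptyset$, the preimage of $\d(V)$ in the blow-up $\vvd$ strictly contains $\d^{\dagger}_V(V)$: it also includes the fibers of the exceptional divisors lying over $\d(D_i\cap V)$ for $i\in I_{\rmt}$ (these survive in $\vvlp$ as $\mathbb{G}_m$-bundles $\wt D_i$, cf.\ \ref{xstarx}). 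So the correct closed subscheme is $\wt V = X^{(R)}\cap\vvlp = X^{\dagger}\cap\vvlp$ and the correct open complement is $\vvlp - \wt V$, with $\wt\g_V$ the corresponding injection and $\sL^{(R)}|_{\vvlp} \isora \R\wt\g_{V*}(\L)$, exactly as in the notation introduced just before \ref{lambda rank 1 iso}. Fortunately the mistake does not affect the conclusion: $\wt\d_V(V) \subset \wt V$, so it is still disjoint from $\vvlp - \wt V$, and $\R\wt\d_V^!\R\wt\g_{V*}(\L) = 0$ by \eqref{low*up!}, which is precisely the paper's argument. In short: right approach, correct conclusion, but the intermediate identification confuses $X^{\dagger}=f^{-1}(\d(X))$ with the strict transform $\d^{\dagger}(X)$.
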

\begin{proof}
It is sufficient to show that, for any integer $q$,
\begin{equation*}
\h^q_{V}(\vvlp,\R \wt \gamma_{V*}\L(d))=0,
\end{equation*}
which follows from the fact that $\R\wt\d_V^! \R \wt \gamma_{V*}\L(d)=0$ \eqref{low*up!}.
\end{proof}

\subsection{}
For any integer $q$, any object $\sG$ of $\dctf(\xxr,\Lambda)$ and any closed subscheme $Z\in\xxr$, we denote by
\begin{equation*}
\h^q_{Z}(\xxr, \sG)\ra \h^q_{Z}(\xxr, \sG\ot^L \sL^{(R)}),\ \ \ x\mapsto x_a,
\end{equation*}
the morphism induced by the canonical map $\L\ra\sL^{(R)}$. For any closed immersion $Z\ra Y$ of closed subschemes of $\xxr$, we denote abusively by
\begin{equation*}
  \h^q_Z(\xxr,\sG)\ra\h^q_Y(\xxr,\sG),\ \ \ x\mapsto x,
\end{equation*}
the canonical map.

\begin{proposition}[\cite{saito cc} 3.3, 3.4]\label{id=e cup x}
We denote by $[X]\in \h^{0}_X(\xxr,\L(d)[2d])$ the cycle class of $\d^{(R)}(X)$. Then we have {\rm (\ref{sLdagger})}, \eqref{fRupstar}
\begin{equation*}
  f^{(R)*}(\id_{j_!(\sF)})=e\cup[X]\in \h^0_{\xr}(\xxr, \sHr).
\end{equation*}
\end{proposition}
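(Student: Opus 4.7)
The plan is to verify the equality by restricting both classes to the open dense subscheme $\uu\subset\xxr$—where both evaluate to the same identity cohomological correspondence of $\sF$—and then to propagate the agreement to all of $\xxr$ via a localization argument exploiting the cleanliness of $\sF$ and the tame ramification along $T\cap V$.

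First I will compute the restrictions to $\uu$. The sheaf $\sHr$ restricts to $\sH$ on $\uu$. For $e\cup[X]$: the Cartesian square \eqref{xxrcar} identifies $[X]|_{\uu}$ with the cycle class $[\d_U(U)]\in\h^0_U(\uu,\L(d)[2d])$, while the bounded ramification isomorphism $\alpha:\d^{(R)*}j^{(R)}_*\sH_0\isora j_*\send(\sF)$ of \eqref{alpha cf} (an isomorphism since the conductor of $\sF$ equals $R$) restricts on $U$ to the tautological identity, so that $e|_U=\id_\sF$. The cup product $(e\cup[X])|_{\uu}=\id_\sF\cup[\d_U(U)]$ therefore represents the identity correspondence of $\sF$ via the pairing recalled in \ref{procc}. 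For $f^{(R)*}(\id_{j_!\sF})$: Proposition \ref{jussmapR} asserts that the map \eqref{fR} extends the identity of $\sH$ on $\uu$, and since $f^{(R)}\circ j^{(R)}$ coincides with the open immersion $\uu\hookrightarrow\xx$, the restriction of $f^{(R)*}(\id_{j_!\sF})$ to $\uu$ equals the restriction of $\id_{j_!\sF}$ to $\uu$, which is again the identity correspondence of $\sF$.

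Next I will pass from agreement on $\uu$ to agreement on $\xxr$. The difference $\eta:=f^{(R)*}(\id_{j_!\sF})-e\cup[X]\in\h^0_{\xr}(\xxr,\sHr)$ vanishes after restriction to $\uu$, so by the localization exact sequence it lifts to a class supported on $\xxr\setminus\uu$. I decompose $\xxr\setminus\uu=E^{(R)}\cup(\vvlp\setminus\uu)$. The tame ramification of $\sF$ along $T\cap V$—via the isomorphism $\ol\sH_V\isora\R h_*\ol\sH^{\dagger}_V$ recalled in \ref{some sHv}—suppresses the contribution from the tame piece $\vvlp\setminus\uu$. Proposition \ref{saito cc3.4}, enabled by the cleanliness and isoclinic-ness of $\sF$, gives $\R\G_{E^{(R)}}(\xxr,j^{(R)}_*(\sH)(d))=0$, hence $\h^0_{E^{(R)}}(\xxr,\sHr)=0$, killing the contribution from $E^{(R)}$. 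Combining the two vanishings forces $\eta=0$.

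The hard part is this last step: the careful localization argument that separates the tame and wild parts of the complement of $\uu$ and concludes that the kernel of the restriction map $\h^0_{\xr}(\xxr,\sHr)\to\h^0_U(\uu,\sH)$ vanishes on the subgroup in which $\eta$ resides. The cleanliness hypothesis on $\sF$—through the vanishing provided by Proposition \ref{saito cc3.4}—is indispensable at the wild part $S$, while the tame ramification hypothesis along $T\cap V$ controls the tame part $T$.
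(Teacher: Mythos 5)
Your overall strategy—check equality on $\uu$ and then exhibit the restriction map as injective by a localization argument using Proposition \ref{saito cc3.4}—is indeed the right shape, and you have correctly isolated that proposition as the key input. The identification of both classes with the identity correspondence after restriction to $\uu$ is also correct: $f^{(R)*}(\id_{j_!\sF})|_{\uu}$ is the identity by Proposition \ref{jussmapR}, and $(e\cup[X])|_{\uu}=\id_{\sF}\cup[\d_U(U)]$ is the identity by \ref{procc}.

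There are, however, two points where the argument is not yet a proof. First, a small but necessary check: since the classes live in $\h^0_{\xr}(\xxr,\sHr)$, the localization sequence places the difference $\eta$ in $\h^0_{\xr\cap(\xxr\setminus\uu)}(\xxr,\sHr)$, not in $\h^0_{\xxr\setminus\uu}(\xxr,\sHr)$; to use Proposition \ref{saito cc3.4} one must observe that $E^{(R)}\subset\xr$ (the exceptional locus of the dilatation is contracted by $f^{(R)}$ into $\d(R)\subset\d(X)$), so that $\xr\cap E^{(R)}=E^{(R)}$ and the vanishing $\R\G_{E^{(R)}}(\xxr,\sHr)=0$ applies to the relevant supports. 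You should state and verify this inclusion.

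Second, and more substantively, the tame piece is not actually handled. After the localization that absorbs $E^{(R)}$, what remains to be shown is a vanishing of the form $\h^0_{\xr\cap(\vvlp\setminus\uu)}(\vvlp,\wt\sH_V)=0$, where $\xr\cap(\vvlp\setminus\uu)$ is the pull-back $\wt{T\cap V}$ of $\d(T\cap V)$ to $\vvlp$. The isomorphism $\ol\sH_V\isora\R h_*\ol\sH^{\dagger}_V$ you invoke lives on $\vv$ and $\vvd$ and concerns push-forward along $h$; it does not directly produce a vanishing for $\wt\sH_V=\wt\nu_*(\sH_0)(d)[2d]$ with supports in $\wt{T\cap V}\subset\vvlp$. (Compare Proposition \ref{lambda rank 1 iso}, which proves a vanishing of a similar flavor, but for the sheaf $\wt\sH_V\ot^L\wt\sL_V$ and the larger support $\wt V$, and crucially under the rank-one hypothesis that makes $\wt\nu_*\sH_0$ locally constant.) As it stands, the step ``tame ramification suppresses the contribution from $\vvlp\setminus\uu$'' is a gap: you need an explicit vanishing statement for this group, with a proof, before the plan yields a complete argument when $T\neq\emptyset$.
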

The proof in (\cite{saito cc} 3.4) should be modified as in \ref{saito cc3.4}.

\begin{proposition}\label{theorem need 1}
If $T\cap S=\emptyset$ or if $\rk_{\L}(\sF)=1$, we have {\rm(\ref{defllcc})}
\begin{equation}\label{llcc=cycle}
  C_S(j_!(\sF))=\rk_{\L}(\sF)\cdot\d^{(R)*}(\lambda^{-1}_0([X]_a))\in \h^0_S(X,\sK_X).
\end{equation}
\end{proposition}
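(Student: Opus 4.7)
The argument is a diagram chase through the commutative diagrams \eqref{maindiagram3row}, \eqref{maindiagram3row2} and \eqref{maindiagram3row3}, with Proposition~\ref{id=e cup x} providing the key geometric input. The plan is to trace $\id_{j_!\sF}\in\h^0_X(\xx,\ol\sH)$ simultaneously along the definition of $C_S(j_!(\sF))$ (via $\alpha(j_!\sF)$ and $\alpha_0(j_!\sF)$, followed by $\kappa$) and along its pull-back to the dilatation $\xxr$, and to identify the two computations.

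First, Proposition~\ref{id=e cup x} gives $f^{(R)*}(\id_{j_!\sF})=e\cup[X]$ in $\h^0_{\xr}(\xxr,\sHr)$. Using the commutativity of the right square of \eqref{maindiagram3row}, which encodes the compatibility of the cup product \eqref{cap prod} with the coefficient change $\L\to\sL^{(R)}$, the image of $\alpha(j_!\sF)$ in $\h^0_{\xr}(\xxr,\sHr\ot^L\sL^{(R)})$ equals $e\cup[X]_a$.

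Next, by Lemma~\ref{iso no h} the canonical map $\lambda_0\colon\h^0_S(\xxr,\sL^{(R)}(d)[2d])\isora\h^0_X(\xxr,\sL^{(R)}(d)[2d])$ is an isomorphism, so $\lambda_0^{-1}([X]_a)$ is well-defined. By the commutativity of the square $(1)$ in \eqref{maindiagram3row2}, which is an instance of \eqref{locdiag1}, we have
\begin{equation*}
\lambda\bigl(e\cup_S\lambda_0^{-1}([X]_a)\bigr)=e\cup[X]_a.
\end{equation*}
Under either hypothesis, $\lambda$ is injective: when $T\cap S=\emptyset$ by Proposition~\ref{intersection empty}, and when $\rk_\L(\sF)=1$ by Proposition~\ref{lambda rank 1 iso}. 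Combined with the uniqueness in Proposition~\ref{injkeylemma}, a chase in \eqref{maindiagram3row2} shows that the image of $\alpha_0(j_!\sF)$ in $\h^0_{\sr}(\xxr,\sHr\ot^L\sL^{(R)})$ is precisely $e\cup_S\lambda_0^{-1}([X]_a)$.

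Finally, the right column of \eqref{maindiagram3row3} sends $\alpha_0(j_!\sF)$ to $C_S(j_!(\sF))=\kappa(\alpha_0(j_!\sF))$. Following the element $e\cup_S\lambda_0^{-1}([X]_a)$ along the bottom row of \eqref{maindiagram3row3} and invoking the commutativity of \eqref{right down square}, the resulting image equals $\rk_\L(\sF)\cdot\d^{(R)*}(\lambda_0^{-1}([X]_a))$ after the canonical identification \eqref{isoadjdg}, which yields the formula. The only non-formal ingredient in the whole chase is the injectivity of $\lambda$; this is the sole place where the assumption $T\cap S=\emptyset$ (or $\rk_\L(\sF)=1$) intervenes, and without it the lift of $\alpha(j_!\sF)$ from $\h^0_{X^{(R)}}$ to $\h^0_{S^{(R)}}$ would not be determined by the cycle-class calculation on $\xxr$.
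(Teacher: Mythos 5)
Your proof is correct and follows the same route as the paper's: apply Proposition~\ref{id=e cup x} together with \eqref{triangle cf} and \eqref{maindiagram3row} to get $\varphi^{(R)*}(\alpha(j_!\sF))=e\cup[X]_a$, lift through \eqref{maindiagram3row2} using the injectivity of $\lambda$ (from \ref{intersection empty} or \ref{lambda rank 1 iso}) and the isomorphism $\lambda_0$ (\ref{iso no h}) to identify $\varphi^{(R)*}(\alpha_0(j_!\sF))=e\cup_S\lambda_0^{-1}([X]_a)$, then chase \eqref{maindiagram3row3} and \eqref{right down square}. The only superfluous remark is the appeal to the uniqueness clause of Proposition~\ref{injkeylemma}; the injectivity of $\lambda$ already forces the identity in $\h^0_{\sr}$.
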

\begin{proof}
By \eqref{triangle cf}, \eqref{maindiagram3row} and \ref{id=e cup x}, we have (\ref{sLdagger})
\begin{equation*}
  \varphi^{(R)*}(\alpha(\id_{j_!(\sF)}))=e\cup([X]_a)\in\h^0_{\xr}(\xxr, \sHr\ot^L\sL^{(R)}).
\end{equation*}
Then, by \ref{lambda rank 1 iso}, \ref{intersection empty}, \ref{iso no h} and \eqref{maindiagram3row2}, we have \eqref{alpha0}
\begin{equation*}
  \varphi^{(R)*}(\a_0(j_!(\sF)))=e\cup_S(\lambda_0^{-1}([X]_a))\in\h^0_{\sr}(\xxr, \sHr\ot^L\sL^{(R)}).
\end{equation*}
Equation \eqref{llcc=cycle} follows form \eqref{maindiagram3row3}.
\end{proof}

\begin{lemma}\label{ts1 3.7}
We put $\wt X^{(R)}=\varphi^{(R)-1}(\d^{\dagger}(X))$ and $\wt S^{(R)}=\varphi^{(R)-1}(\d^{\dagger}(S))$ \eqref{diagR}. Then
\begin{itemize}
  \item[(i)]
  There exists a unique element $\tau\in \CH_d(\wt S^{(R)})$ which maps to $[X]-\varphi^{(R)!}[X]\in\CH_d(\wt X^{(R)})$, and we have
  \begin{equation}\label{cycle eq 1}
    \d^{(R)!}(\tau)=(-1)^d\cdot c_d\left(\O^1_{X/k}(\log D)\ot_{\sO_X}\sO_X(R)-\O^1_{X/k}(\log T)\right)^X_S\cap[X]\in\CH_0(S).
  \end{equation}
  \item[(ii)]
  We consider $\tau$ as an element in $\h^0_{\wt S^{(R)}}(\xxr,\L(d)[2d])$ by the cycle map {\rm (\ref{cycle map def})}. We have
  \begin{equation}\label{cycle eq 2}
  \tau_a=\lambda_0^{-1}([X]_a)\in \h^0_{\wt S^{(R)}}(\xxr,\sL^{(R)}(d)[2d]).
  \end{equation}
  \end{itemize}
\end{lemma}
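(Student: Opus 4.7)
The plan is to treat (i) and (ii) in sequence, with (i) being an intersection-theoretic computation in Chow groups and (ii) a compatibility statement between the Chow-theoretic identity of (i) and the cycle class map. For (i), the first step is to show that $[X]-\varphi^{(R)!}[X]\in\CH_d(\wt X^{(R)})$ is supported on $\wt S^{(R)}$: since $R$ has support in $S$, the dilatation $\xxr\to\xxlp$ is trivial over $V$, and the blow-up $\xxd\to\xx$ is an isomorphism outside $T\ti_kT$, so over the open complement $\wt X^{(R)}-\wt S^{(R)}$ the refined Gysin pullback $\varphi^{(R)!}[X]$ and $[X]$ agree. Uniqueness of the lift $\tau\in\CH_d(\wt S^{(R)})$ I would obtain by producing $\tau$ explicitly via a deformation-to-the-normal-cone construction applied to $X^{(R)}\subset\xxr$, paralleling \cite{ts1} 3.7.

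For the identity \eqref{cycle eq 1}, the plan is to compute $\d^{(R)!}[X]$ and $\d^{\dagger!}[X]=\d^{(R)!}\varphi^{(R)!}[X]$ separately and take the difference. The explicit vector bundle description \eqref{vectorbundle xxr} identifies $\d^{(R)}$ with the zero section of $\mathbf{V}(\OX(\log D)\otimes_{\sO_X}\sO_X(R))$ over $R$, so the self-intersection formula contributes $(-1)^d c_d(\OX(\log D)\otimes_{\sO_X}\sO_X(R))\cap[X]$ supported in $S$. A parallel analysis of $\xxd$ as the iterated blow-up of $\xx$ along the smooth centers $D_i\ti_k D_i$ for $i\in I_{\rmt}$ identifies the conormal sheaf of $\d^{\dagger}(X)$ with $\OX(\log T)$, giving $\d^{\dagger!}[X]=(-1)^d c_d(\OX(\log T))\cap[X]$. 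The difference assembles into the localized Chern bivariant class from \ref{locchern}, which proves \eqref{cycle eq 1}. For (ii), the compatibility of cycle classes with refined Gysin pullbacks \eqref{up*gysin!} turns the Chow-theoretic identity defining $\tau$ into the cohomological identity $\tau_a=[X]_a-(\varphi^{(R)!}[X])_a$ in $\h^0_{X^{(R)}}(\xxr,\sL^{(R)}(d)[2d])$ after extending the support. It then suffices to verify $(\varphi^{(R)!}[X])_a=0$: since $\sL^{(R)}=\varphi^{(R)*}\sL^{\dagger}$ and $\sL^{\dagger}=\R\g^{\dagger}_*\L$ with $\g^{\dagger}$ the inclusion of $\xxd-X^{\dagger}$, the cycle class $[X^{\dagger}]_a$ in $\h^0_{X^{\dagger}}(\xxd,\sL^{\dagger}(d)[2d])$ vanishes by the acyclicity principle of \ref{acyclic}, and this vanishing is preserved by $\varphi^{(R)*}$. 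Combined with the isomorphism \ref{iso no h}, this identifies $\tau_a$ with $\lambda_0^{-1}([X]_a)$.

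The main technical obstacle I anticipate is the explicit normal-bundle identification underlying $\d^{\dagger!}[X]=(-1)^d c_d(\OX(\log T))\cap[X]$: tracking the iterated blow-up construction of $\xxd$ and verifying that the canonical conormal sheaf of $\d^{\dagger}(X)\subset\xxd$ is indeed $\OX(\log T)$ requires some care. A similar but easier computation handles $\d^{(R)!}[X]$, thanks to the clean vector bundle presentation \eqref{vectorbundle xxr}. Once these geometric identifications are secured, both (i) and (ii) reduce to routine applications of the self-intersection formula, the functoriality of cycle classes \eqref{up*gysin!}, and the acyclicity \ref{acyclic}.
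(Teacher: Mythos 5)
Part (ii) of your sketch is essentially correct: the vanishing of $[X^{\dagger}]_a$ in $\h^0_{X^{\dagger}}(\xxd,\sL^{\dagger}(d)[2d])$ (which follows from $\R i_{X^{\dagger}}^!\R\g^{\dagger}_*\L=0$), the compatibility of cycle classes with $\varphi^{(R)*}$ and refined Gysin pullback via \eqref{up*gysin!}, and Lemma \ref{iso no h} do combine to give \eqref{cycle eq 2}.

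For part (i), however, you have two genuine gaps, both located in the step you describe as routine "assembly." First, computing $\d^{(R)!}[X]$ and $\d^{\dagger!}[X]=\d^{(R)!}\varphi^{(R)!}[X]$ separately by the self-intersection formula and subtracting produces an identity in $\CH_0(X)$, not $\CH_0(S)$. Since $\CH_0(S)\to\CH_0(X)$ is not injective, agreement after pushing to $X$ does not prove the localized identity \eqref{cycle eq 1}. Producing the difference directly as a class on $S$ is precisely the content of the localized Chern class $c_d(\sE_0-\sE_1)^X_S$ of \ref{locchern} and of Kato--Saito's excess-intersection formula (\cite{ksann} 3.4.9), which the paper invokes for exactly this reason; that result is where all the work resides and cannot be replaced by two self-intersection computations plus a gesture at bivariant classes. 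Second, the uniqueness of $\tau\in\CH_d(\wt S^{(R)})$ does not follow from the excision sequence $\CH_d(\wt S^{(R)})\to\CH_d(\wt X^{(R)})\to\CH_d(\wt X^{(R)}\setminus\wt S^{(R)})\to 0$, which is only right exact, and a deformation-to-the-normal-cone construction produces one candidate but does not rule out other lifts; the uniqueness is again part of (\cite{ksann} 3.4.9). A minor imprecision: over $\wt X^{(R)}\setminus\wt S^{(R)}$, $\varphi^{(R)}$ is not "trivial" but restricts to an open immersion $\vvlp\hookrightarrow\vvd$ (the dilatation is trivial over $V$ because $\mathrm{supp}(R)\subset S$, and the blow-down of the $S$-exceptional divisors is also trivial over $\vv$); this is what actually forces $[X]-\varphi^{(R)!}[X]$ to restrict to zero there, and the argument should say so.
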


The proof of this lemma is similar to that of (\cite{ts1} 3.7), in which the author consider the case where $\mathrm{supp}(R)=S$. It is an immediate application of (\cite{ksann} 3.4.9).

\begin{corollary}\label{theorem need 2}
We have
\begin{equation}\label{th need form 2}
\d^{(R)*}(\lambda^{-1}_0([X]_a))=(-1)^d\cdot c_d\left(\O^1_{X/k}(\log D)\ot_{\sO_X}\sO_{X}(R)-\O^1_{X/k}(\log T)\right)^X_S\cap[X]\in\h^0_S(X,\sK_X),
\end{equation}
where the right hand side is considered as an element of $\h^0_S(X,\sK_X)$ by the cycle map.
\end{corollary}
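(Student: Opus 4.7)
The strategy is to combine the two parts of Lemma \ref{ts1 3.7} using the compatibility between the cycle map and the refined Gysin homomorphism recorded in diagram \eqref{up*gysin!}. Part (ii) of that lemma identifies the class $\lambda_0^{-1}([X]_a)$ we need to pull back, while part (i) computes the resulting Gysin image as exactly the desired localized Chern class. The only remaining work is to match the $\sL^{(R)}$-twist with Lemma \ref{isoadjunction}.

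First I would start from the identity $\lambda_0^{-1}([X]_a)=\tau_a$ in $\h^0_{\wt S^{(R)}}(\xxr,\sL^{(R)}(d)[2d])$ supplied by Lemma \ref{ts1 3.7}(ii), and pull back along $\d^{(R)}$. Since $\varphi^{(R)}\circ\d^{(R)}=\d^{\dagger}$ (both are liftings of the diagonal) and $\d^{(R)-1}(\wt S^{(R)})=\d^{\dagger-1}(\d^{\dagger}(S))=S$, the result lies in $\h^0_S(X,\d^{\dagger*}\sL^{\dagger}(d)[2d])$. The twisting morphism $\L\ra\sL^{(R)}=\varphi^{(R)*}\sL^{\dagger}$ is the $\varphi^{(R)}$-pull-back of $\L\ra\sL^{\dagger}$, so it commutes with $\d^{(R)*}$; hence $\d^{(R)*}(\tau_a)=(\d^{(R)*}\tau)_a$, where now the subscript $a$ refers to the canonical map induced by $\L\ra\d^{\dagger*}\sL^{\dagger}$.

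Next I would apply diagram \eqref{up*gysin!} to the morphism $\d^{(R)}:X\ra\xxr$ between smooth $k$-schemes of dimensions $d$ and $2d$, to obtain $\d^{(R)*}\tau=\cl(\d^{(R)!}\tau)$ in $\h^{2d}_S(X,\L(d))=\h^0_S(X,\sK_X)$. By Lemma \ref{ts1 3.7}(i), this is precisely the zero-cycle class $(-1)^d\,c_d(\O^1_{X/k}(\log D)\ot_{\sO_X}\sO_X(R)-\O^1_{X/k}(\log T))^X_S\cap[X]$. Finally, I would invoke Lemma \ref{isoadjunction}: the isomorphism $\h^0_S(X,\sK_X)\isora\h^0_S(X,\sK_X\ot^L\d^{\dagger*}\sL^{\dagger})$ is by definition induced by the canonical map $\L\ra\d^{\dagger*}\sL^{\dagger}$, and therefore coincides with the operation $(-)_a$ under the Poincar\'e duality identification $\h^0_S(X,\L(d)[2d])=\h^0_S(X,\sK_X)$. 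This identifies $(\d^{(R)*}\tau)_a$ with $\d^{(R)*}\tau$ as elements of $\h^0_S(X,\sK_X)$, yielding \eqref{th need form 2}.

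I do not see any substantive obstacle: once Lemma \ref{ts1 3.7} and the compatibility \eqref{up*gysin!} are granted, the corollary is essentially bookkeeping. The only minor points to verify are the commutativity of $(-)_a$ with $\d^{(R)*}$ (a direct consequence of functoriality of adjunction), the dimension count ensuring that the codomain of the cycle map matches $\h^0_S(X,\sK_X)$, and the comparison between the Poincar\'e-duality version of $(-)_a$ and the isomorphism of Lemma \ref{isoadjunction}.
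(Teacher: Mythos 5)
Your proposal is correct and follows essentially the same route as the paper: both rely on diagram \eqref{up*gysin!} to identify $\d^{(R)!}(\tau)$ (as a zero-cycle class) with $\d^{(R)*}(\tau)$ (as a cohomology class), use the commutativity of the twist $(-)_a$ with pull-back together with Lemma \ref{isoadjunction} to reduce $\d^{(R)*}(\tau_a)$ to $\d^{(R)*}(\tau)$, and then invoke both parts of Lemma \ref{ts1 3.7}. The paper presents the two identities as \eqref{cycle eq 3} and \eqref{cycle eq 4} in the opposite order, but the ingredients and the logic are identical.
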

\begin{proof}
Applying \eqref{up*gysin!} to the map $\d^{(R)}:X\ra\xxr$, we have
\begin{equation}\label{cycle eq 3}
  \d^{(R)!}(\tau)=\d^{(R)*}(\tau)\in \h^0_S(X,\sK_X),
\end{equation}
where we consider $\d^{(R)!}(\tau)$ as an element of $\h^0_S(X,\sK_X)$ by the cycle map.
Since the following diagram
\begin{equation*}
  \xymatrix{\relax
  \h^0_{\wt S^{(R)}}(\xxr,\L(d)[2d])\ar[d]\ar[r]&\h^0_S(X,\sK_X)\ar[d]^{\eqref{isoadjdg}}\\
  \h^0_{\wt S^{(R)}}(\xxr,\sL^{(R)}(d)[2d])\ar[r]&\h^0_S(X,\sK_X\ot^L\d^{\dagger*}\sL^{\dagger})}
\end{equation*}
is commutative, where the horizontal arrows are the pull-backs by $\d^{(R)}$, we have
\begin{equation}\label{cycle eq 4}
 \d^{(R)*}(\tau)=\d^{(R)*}(\tau_a)\in \h^0_S(X,\sK_X).
\end{equation}
Hence, \eqref{th need form 2} follows form \eqref{cycle eq 1}, \eqref{cycle eq 2}, \eqref{cycle eq 3} and \eqref{cycle eq 4}.
\end{proof}

\subsection{}\label{proof keytheorem}
{\em Proof of Theorem {\rm\ref{keytheorem llcc lchern}}.}
By \ref{theorem need 1} and \ref{theorem need 2}, we have
\begin{eqnarray*}
  C_S(j_!(\sF))&=&(-1)^d\rk_{\L}(\sF)\cdot c_d\left(\O^1_{X/k}(\log D)\ot_{\sO_X}\sO_{X}(R)-\O^1_{X/k}(\log T)\right)^X_S\cap[X],\\
C_S(j_!(\L_U))&=&(-1)^d\cdot c_d\left(\O^1_{X/k}(\log D)-\O^1_{X/k}(\log T)\right)^X_S\cap[X]
\end{eqnarray*}
in $\h^0_S(X,\sK_X)$.
Hence,
\begin{eqnarray*}
  &&C_{S}(j_!(\sF))-\rk_{\L}(\sF)\cdot C_{S}(j_!(\L_U))\\
  &=&(-1)^{d}\rk_{\L}(\sF)\cdot c_d\left(\OX(\log D)\ot_{\sO_X}\sO_X(R)-\OX(\log D)\right)^X_{S}\cap[X]\in H^0_{S}(X,\sK_X).\nonumber
\end{eqnarray*}
\hfill$\Box$

\begin{remark}[\cite{as} 4.2.1, \cite{ts1} remark after 3.9]\label{lchern chern}
Observe that we have
\begin{eqnarray}
&& (-1)^{d}\cdot c_d\left(\O^1_{X/k}(\log D)\ot_{\sO_X}\sO_{X}(R)-\O^1_{X/k}(\log D)\right)^X_S\cap[X]\label{lchern=chern}\\
&=&-\{c(\O^1_{X/k}(\log D)^{\vee})\cap(1+c_1(\sO_X(R)))^{-1}\cap [R]\}_{\dim 0}\nonumber\\
&=&(-1)^d\cdot\{c(\O^1_{X/k}(\log D))\cap(1-c_1(\sO_X(R)))^{-1}\cap [R]\}_{\dim 0}\in \CH_0(S).\nonumber
\end{eqnarray}
\end{remark}

\subsection{}\label{simp cc}
we denote by
\begin{equation*}
\mathrm{T}^*X(\log D)=\mathbf{V}(\O^1_{X/k}(\log D)^{\vee}),
\end{equation*}
the logarithmic cotangent bundle of $X$. Since the $R$ is supported in $S$, $R=\sum_{i\in I_{\rw}}r_iD_i$, where $r_i\in\mathbb{Z}_{\ddy 0}$. By (\cite{saito cc} 3.16), For $i\in I_{\rw}$, we have \eqref{CCi}
\begin{equation*}
  CC_i(\sF)=r_i\cdot\rk_{\L}(\sF)\cdot\{c(\O^1_{X/k}(\log D))\cap (1-c_1(\sO_X(R)))^{-1}\cap [\mathrm{T}^*X(\log D)\ti_XD_i]\}_{\dim d}
\end{equation*}
in  $\CH_d(\mathrm{T}^*X(\log D)\ti_XD_i)$.
Hence, we have \eqref{CC0}
\begin{equation}
 CC^*(\sF)=\rk_{\L}(\sF)\cdot\{c(\O^1_{X/k}(\log D))\cap (1-c_1(\sO_X(R)))^{-1}\cap [\mathrm{T}^*X(\log D)\ti_X R]\}_{\dim d}
\end{equation}
in $\CH_d(\mathrm{T}^*X(\log D)\ti_X S)$.

\subsection{}
In the following, we take the notation and assumptions of {\rm\ref{X to Y not}}, and we assume that $Y$ is of dimension $1$ and that $Z$ is a closed point $y$ of $Y$, and we denote by $\ol y$ a geometric point localized at $y$, by $Y_{(\ol y)}$ the strict localization of $Y$ at $\ol y$ and by $\ol\eta$ a geometric generic point of $Y_{(\ol y)}$.

\begin{theorem}\label{pre-cond form}
 We assume that $S=D$ (i.e., $T=\emptyset$) or that $\rk_{\L}(\sF)=1$. Then, for any section $s:X\ra \mathrm{T}^*X(\log D)$, we have
\begin{equation}\label{cond form equ}
\mathrm{sw}_y(\R\G_c(U_{\ol\eta},\sF|_{U_{\ol\eta}}))
-\rk_{\L}(\sF)\cdot\mathrm{sw}_y(\R\G_c(U_{\ol\eta},\L))=(-1)^{d+1}\deg(CC^*(\sF)\cap[s(X)])
\end{equation}
in $\h^0_{\{y\}}(Y,\sK_Y)\isora\L$.
\end{theorem}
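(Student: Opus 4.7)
The plan is to chain together three prior results. First, Corollary~\ref{llcc ns cor} converts the Swan conductor difference on the left hand side of \eqref{cond form equ} into the pushforward of a difference of refined characteristic classes. Next, Theorem~\ref{keytheorem llcc lchern} (applicable because the hypothesis $S=D$ forces $T=\emptyset$, and the rank one case is covered directly) computes that difference as a localized Chern class intersection capped with $[X]$. Finally, Lemma~\ref{simp cc} together with Remark~\ref{lchern chern} lets us recognize the resulting zero-cycle as $(-1)^d \rk_\L(\sF)$ times the degree of a Chern-class expression that also computes $\deg(CC^*(\sF)\cap [s(X)])$.

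More concretely, applying Corollary~\ref{llcc ns cor} with $\pi=f$ rewrites the left hand side of \eqref{cond form equ} as $-\pi_*(C_S(j_!(\sF))-\rk_\L(\sF)\cdot C_S(j_!(\L_U)))\in\h^0_{\{y\}}(Y,\sK_Y)\isora\L$. Under our hypotheses, Theorem~\ref{keytheorem llcc lchern} identifies the quantity inside $\pi_*$ with
\[
(-1)^d\rk_\L(\sF)\cdot c_d(\OX(\log D)\ot_{\sO_X}\sO_X(R)-\OX(\log D))^X_S\cap[X] \in \h^0_S(X,\sK_X).
\]
Since $\pi_*$ on zero cycles coincides with the degree map via the identification $\h^0_{\{y\}}(Y,\sK_Y)\isora\L$, and Remark~\ref{lchern chern} identifies the localized Chern class expression capped with $[X]$ with $\{c(\OX(\log D))\cap(1-c_1(\sO_X(R)))^{-1}\cap[R]\}_{\dim 0}\in\CH_0(S)$, the left hand side of \eqref{cond form equ} equals
\[
(-1)^{d+1}\rk_\L(\sF)\cdot\deg\{c(\OX(\log D))\cap(1-c_1(\sO_X(R)))^{-1}\cap[R]\}_{\dim 0}.
\]

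The remaining step is to identify this degree with $\deg(CC^*(\sF)\cap[s(X)])$. By Lemma~\ref{simp cc}, $CC^*(\sF)$ is $\rk_\L(\sF)$ times the class $\{c(\OX(\log D))\cap(1-c_1(\sO_X(R)))^{-1}\cap[\mathrm{T}^*X(\log D)\ti_XR]\}_{\dim d}$ in $\CH_d(\mathrm{T}^*X(\log D)\ti_XS)$, i.e., the flat pullback by the bundle projection $p:\mathrm{T}^*X(\log D)\ra X$ of the Chern-class expression capped with $[R]$. Since $\mathrm{T}^*X(\log D)\ra X$ is a vector bundle of rank $d$, any section $s:X\ra\mathrm{T}^*X(\log D)$ is a regular closed immersion of codimension $d$, all sections are rationally equivalent to the zero section, and the refined Gysin pullback $s^!$ satisfies $s^!\circ p^*=\id$ on Chow groups and commutes with cap products by Chern classes pulled back from $X$. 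Thus $s^!(CC^*(\sF))=\rk_\L(\sF)\cdot\{c(\OX(\log D))\cap(1-c_1(\sO_X(R)))^{-1}\cap[R]\}_{\dim 0}$ in $\CH_0(S)$, whose degree is precisely $\deg(CC^*(\sF)\cap[s(X)])$ by the compatibility of refined Gysin and cycle maps \eqref{up*gysin!}. The main delicate point is sign bookkeeping: one minus sign from Corollary~\ref{llcc ns cor}, one $(-1)^d$ from Theorem~\ref{keytheorem llcc lchern}, and a matching $(-1)^d$ hidden in Remark~\ref{lchern chern}; combined they yield $(-1)^{d+1}$, matching the right hand side of \eqref{cond form equ}.
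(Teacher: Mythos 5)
Your proposal is correct and follows essentially the same path as the paper's proof, just in the opposite order: the paper first computes $CC^*(\sF)\cap[s(X)]$ in $\CH_0(S)$ using $\varpi\circ s=\id_X$, then invokes Theorem \ref{keytheorem llcc lchern} and Remark \ref{lchern chern} to relate this to $C_S(j_!(\sF))-\rk_\L(\sF)\cdot C_S(j_!(\L_U))$, and finally applies Corollary \ref{llcc ns cor}; you start from the Swan conductor side and unwind in the other direction. Your use of $s^!\circ\varpi^*=\id$ is the same projection-formula fact the paper expresses as $\varpi^*(\alpha)\cap[s(X)]=\alpha$. The step-by-step derivation in your middle paragraph is right; the only flaw is the final sign-bookkeeping sentence, which is garbled. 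Remark~\ref{lchern chern} contributes no extra sign: it equates $(-1)^d\cdot c_d(\cdots)^X_S\cap[X]$ with $(-1)^d\cdot\{c(\OX(\log D))\cap(1-c_1(\sO_X(R)))^{-1}\cap[R]\}_{\dim 0}$, so the $(-1)^d$'s cancel and the bracketed Chern expressions are equal without any sign. The total sign is just $(-1)$ from Corollary~\ref{llcc ns cor} times $(-1)^d$ from Theorem~\ref{keytheorem llcc lchern}, i.e.\ $(-1)^{d+1}$; if there really were an additional $(-1)^d$ ``hidden in the Remark'' as your last sentence says, the product would be $(-1)^{2d+1}=-1$, contradicting both the statement and your own correct computation two sentences earlier.
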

\begin{proof}
We denote by $\varpi:\mathrm{T}^*X(\log D)\ra X$ the canonical projection.
Since $\varpi\circ s=\id_X$, we have
\begin{eqnarray*}
  &&CC^*(\sF)\cap [s(X)]\\
  &=&\rk_{\L}(\sF)\cdot\{c(\O^1_{X/k}(\log D))\cap (1-c_1(\sO_X(R)))^{-1}\cap \varpi^*[R]\cap[s(X)]\}_{\dim=0}\\
  &=&\rk_{\L}(\sF)\cdot \varpi^*(\{c(\O^1_{X/k}(\log D))\cap (1-c_1(\sO_X(R)))^{-1}\cap [R]\}_{\dim=0})\cap[s(X)]\\
  &=&\rk_{\L}(\sF)\cdot \{c(\O^1_{X/k}(\log D))\cap (1-c_1(\sO_X(R)))^{-1}\cap [R]\}_{\dim=0}\in \CH_0(S).
\end{eqnarray*}
By \ref{keytheorem llcc lchern} and \eqref{lchern=chern}, we get (\ref{cycle map def})
\begin{equation*}
  (-1)^d(CC^*(\sF)\cap[s(X)])=C_{S}(j_!(\sF))-\rk_{\L}(\sF)\cdot C_{S}(j_!(\L_U))\in\h^0_S(X,\sK_X).
\end{equation*}
Hence, by (\ref{llcc ns cor}), we have
\begin{equation*}
  \mathrm{sw}_y(\R\G_c(U_{\ol\eta},\sF|_{U_{\ol\eta}}))
  -\rk_{\L}(\sF)
  \cdot\mathrm{sw}_y(\R\G_c(U_{\ol\eta},\L))=(-1)^{d+1}\pi_*(CC^*(\sF)\cap[s(X)])
\end{equation*}
in $\h^0_{\{y\}}(Y,\sK_Y)\isora\L$.
It is easy to see that the composed map
\begin{equation*}
  \CH_0(S)\xra{\cl}\h^0_S(X,\sK_X)\ra\h^0_{\{y\}}(Y,\sK_Y)\isora\L,
\end{equation*}
where the second arrow is the push-forward \eqref{proper pushf}, is just the degree map of zero cycles. We obtain \eqref{cond form equ}.
\end{proof}

\begin{remark}
Since $\pi:X\ra Y$ is proper and $T\cap V$ is a divisor with simple normal crossing relatively to $W$ (\ref{X to Y not}), the condition $S\cap T=\emptyset$ in \ref{keytheorem llcc lchern} implies $T=\emptyset$.
\end{remark}

\subsection{}\label{proof cond form}{\em Proof of Theorem {\rm\ref{cond form intro}}.}
Since $\sF$ is tamely ramified along $T\cap V$ relatively to $V$, $\sF|_{U_{\ol\eta}}$ is tamely ramified along $(T\cap V)_{\ol\eta}$ relatively to $V_{\ol\eta}$. By (\cite{illusie} 2.7, \cite{saito cc} 3.2), we have
\begin{eqnarray}
  \rk_{\L}(\R\G_c(U_{\ol\eta},\sF|_{U_{\ol\eta}}))&=&(-1)^{d-1}\rk_{\L}(\sF) \cdot c_{d-1}(\O^1_{V_{\ol\eta}/\ol\eta}(\log (T\cap V)_{\ol\eta}))\cap [V_{\ol\eta}]\nonumber\\
&=&\rk_{\L}(\sF)\cdot\rk_{\L}(\R\G_c(U_{\ol\eta},\L))\nonumber
\end{eqnarray}
in $\h^0(V_{\ol\eta},\sK_{V_{\ol\eta}})\isora\L$. Hence, we obtain \eqref{c f intro form} by \ref{pre-cond form}. $\hfill\Box$

\begin{remark}
We denote by $K$ the function field of $Y_{(\ol y)}$, by $\ol K$ a separable closure of $K$ and by $P$ the wild inertia subgroup of $\mathrm{Gal}(\ol K/K)$. We assume that $T=\emptyset$ and $Q$ is reduced. Notice that $X\ti_Y Y_{(\ol y)}$ is semi-stable over the strict trait $Y_{(\ol y)}$. Then the cohomology group
\begin{equation*}
  \h^*(U_{\ol\eta},\L)=\h^*_c(U_{\ol\eta},\L)
\end{equation*}
is tame, i.e., the action of $P$ is trivial (\cite{atml} 3.3). Hence, $\mathrm{sw}_y(\R\G_c(U_{\ol\eta},\L))=0$.
\end{remark}

\end{document}